\newtheorem{remark}[theorem]{Remark}
\begin{document}

\title{Entropy for practical stabilization\thanks{B. H. thanks the European
Commission for funding through the Marie Curie fellowship STALDYS792919
(Statistical Learning for Dynamical Systems).}}
\author{Fritz Colonius\thanks{Institut f\"{u}r Mathematik, Universit\"{a}t Augsburg,
Augsburg, Germany}
\and Boumediene Hamzi\thanks{Department of Mathematics, Imperial College London,
London, UK}}
\maketitle

\begin{abstract}
For deterministic continuous time nonlinear control systems, $\varepsilon
$-practical stabilization entropy and practical stabilization entropy are
introduced. Here the rate of attraction is specified by a $\mathcal{KL}%
$-function. Upper and lower bounds for the diverse entropies are proved, with
special attention to exponential $\mathcal{KL}$-functions. The relation to
feedbacks is discussed, the linear case and several nonlinear examples are
analyzed in detail.

\end{abstract}

\today

\textbf{Keywords:} practical stabilization, practical stabilization entropy,
invariance entropy

\textbf{MSC\ 2010: }93D15, 37B40, 94A17.

\section{Introduction}

This paper analyzes entropy properties for practical stabilization of control
systems described by ordinary differential equations. The constructions are
similar to the theory of invariance entropy motivated by digitally connected
control systems. A controller which in finite time intervals $[0,\tau]$
receives only finitely many data can generate only finitely many (open-loop)
control functions on $[0,\tau]$. Invariance entropy abstracts from this
situation by counting the number of control functions needed in order to
achieve invariance on $[0,\tau]$ and then looks at the exponential growth rate
of this number as $\tau$ tends to infinity. The specific relations to minimal
data rates are worked out in the monograph Kawan \cite{Kawa13}, where also the
relations to feedback entropy introduced in the pioneering work Nair, Evans,
Mareels and Moran \cite{NEMM04} are clarified. Related work includes Kawan and
Da Silva \cite{KawaDS16} using hyperbolicity conditions, Huang and Zhong
\cite{HuanZ18} for a dimension-like characterization, and Wang, Huang, and Sun
\cite{WangHS19} for a measure-theoretic version. A similar approach is taken
in Colonius \cite{Colo12b} for entropy of exponential stabilization and
analogous constructions are also used for state estimation in Liberzon and
Mitra \cite{LibeM18} as well as Matveev and Pogromsky \cite{MatvP16, MatvP19},
Kawan, Matveev and Pogromsky \cite{KaMP21}. Related work is also due to Berger
and Jungers \cite{BeJu20}, where finite-data rates for linear systems with
switching are analyzed.

Our motivation to consider practical stabilizability is twofold: There are
systems where instead of stabilization only practical stabilization is
possible (throughout the paper \emph{stability/stabilization} means
\emph{asymptotic stability/stabilization} if not indicated otherwise). Some
examples are provided in Section \ref{Section5}. Perhaps more relevant is the
fact that standard stabilization algorithms may only lead to practical
stabilization although, theoretically, stabilization is possible. This is the
case for Economic Model Predictive Control (EMPC) schemes, cf. Zanon and
Faulwassser \cite{ZF18} where practical stabilization is achieved, but
stabilization does not hold \cite[Theorem 1 and Theorem 3]{ZF18}. Furthermore,
sampled feedback of stabilizable systems may only lead to practical
stabilization, cf. Gr\"{u}ne \cite[Section 9.4]{Grue} for a simple example.
Similarly, the restriction to other classes of feedbacks may entail that only
practical stabilization is possible.

The purpose of the present paper is to contribute to an understanding of
entropy for practical stabilization. We specify not only the sets $\Gamma$ of
initial states and the \textquotedblleft target set\textquotedblright%
\ $\Lambda$ of final states, but also the convergence rate given by a
$\mathcal{KL}$-function, similarly as in the definition of practical stability
in Hamzi and Krener \cite{HamzK03}. For this purpose we introduce new notions
of entropy for $\varepsilon$-practical stabilization, practical stabilization,
and also for stabilization. We consider the minimal number of control
functions needed in order to achieve the practical stabilization goal on a
finite time interval. Then we let time tend to infinity and consider the
exponential growth rate of these numbers. This is similar to the familiar
definition of invariance entropy as exposed, in particular, in Kawan
\cite{Kawa13}. Note, however, that here in contrast to \cite{Kawa13}, the set
of initial states is in general not a subset of the target set. The relation
to feedbacks is briefly discussed based on a new notion of entropy for feedbacks.

In more specific terms the basic construction for entropy is the following.
Consider a control system in $\mathbb{R}^{d}$ of the form $\dot{x}%
(t)=f(x(t),u(t))$ with a set $\mathcal{U}$ of admissible control functions $u$
and trajectories denoted by $\varphi(t,x_{0},u),t\geq0$. Fix subsets
$\Gamma,\Lambda\subset\mathbb{R}^{d}$ and a $\mathcal{KL}$-function $\zeta$.
For $\tau>0$ a set $\mathcal{S}\subset\mathcal{U}$ of controls is $(\tau
,\zeta,\Gamma,\Lambda)$-spanning if for every initial value $x_{0}\in\Gamma$
there exists $u\in\mathcal{S}$ with%
\begin{equation}
d(\varphi(t,x_{0},u),\Lambda)\leq\zeta\left(  d(x_{0},\Lambda),t\right)
\text{ for all }t\in\lbrack0,\tau]. \label{01}%
\end{equation}
Denoting by $r(\tau,\zeta,\Gamma,\Lambda)$ the minimal cardinality of a
$(\tau,\zeta,\Gamma,\Lambda)$-spanning set we define the stabilization entropy
by $\overline{\lim}_{\tau\rightarrow\infty}\frac{1}{\tau}\log r(\tau
,\zeta,\Gamma,\Lambda)$. This number measures, how fast the average number of
required controls increases, when the system should approach the set $\Lambda$
with the bound (\ref{01}) as time $\tau$ tends to infinity. In order to
guarantee the existence of finite spanning sets of controls, this notion has
to be slightly modified. Practical stabilizability properties, which are in
the focus of the present paper, are obtained if we require that the solutions
approach $\Lambda$ only approximately, cf. Definition \ref{definition_main}.
We remark that the construction of entropy via spanning sets follows the
classical construction of entropy for dynamical systems in metric spaces due
to Bowen and Dinaburg. The logarithm with base $2$ is directly related to the
number of bits needed to choose a control $u$; for continuous time systems, as
considered here, the natural logarithm is more convenient.

The contents of this paper are as follows. Section \ref{Section2} introduces
$\varepsilon$-practical stabilization entropy, practical stabilization
entropy, and stabilization entropy about a compact set $\Lambda$ for compact
sets $\Gamma$ of initial states and compact control ranges. Also modifications
for non-compact control ranges and non-compact sets of initial states are
indicated. Section \ref{Section3} proves upper bounds for the diverse
entropies, and lower bounds based on volume growth arguments are established.
Special attention is given to exponential $\mathcal{KL}$-functions of the form
$\zeta(r,s)=e^{-\alpha s}Mr,r,s\geq0$ with $\alpha>0$ and $M\geq1$. Section
\ref{Section4} briefly discusses the relation to feedbacks. Section
\ref{Section5} analyzes linear systems, and two scalar nonlinear examples
where only practically stabilizing quadratic and piecewise linear feedbacks,
resp., can be constructed (the corresponding proofs are given in an appendix).
For these systems and a similar system in $\mathbb{R}^{d}$ estimates for the
entropies can be obtained. The analysis reveals some subtleties in the
constructions. Finally, Section \ref{Section6} draws some conclusions and
presents open questions.

\textbf{Notation.} A $\mathcal{KL}$-function is a continuous function
$\zeta:[0,\infty)\times\lbrack0,\infty)\rightarrow\lbrack0,\infty)$ such that
$\zeta(r,s)$ is strictly increasing in $r$ for fixed $s$ with $\zeta(0,s)=0$
and strictly decreasing with respect to $s$ for fixed $r$ with $\lim
_{s\rightarrow\infty}\zeta(r,s)=0$. In a metric space, the distance of a point
$x$ to a nonvoid set $A$ is $d(x,A):=\inf\{d(x,a)\left\vert a\in A\right.  \}$
and for a compact set $A$ the $\varepsilon$-neighborhood is $N(A;\varepsilon
)=\{x\left\vert d(x,A)<\varepsilon\right.  \}$. For a point $a$ we write the
ball with radius $\varepsilon$ around $a$ as $\mathbf{B}(a,\varepsilon
)=\{x\left\vert d(x,a)<\varepsilon\right.  \}$. The cardinality, viz. the
number of elements, of a finite set $A$ is denoted by $\#A$. The natural
logarithm is denoted by $\log$ and the limit superior is denoted by
$\overline{\lim}$.

\section{Entropy notions\label{Section2}}

We consider control systems in $\mathbb{R}^{d}$ of the form%
\begin{equation}
\dot{x}(t)=f(x(t),u(t)),u(t)\in U. \label{2.1}%
\end{equation}
The control range $U$ is a subset of $\mathbb{R}^{m}$ and the set of
admissible~control functions is given by
\[
\mathcal{U}=\{u\in L^{\infty}([0,\infty),\mathbb{R}^{m}):u(t)\in U\text{ for
almost all }t\}.
\]
We assume standard conditions on $f$ guaranteeing existence and uniqueness of
solutions $\varphi(t,x_{0},u),t\geq0$, with $\varphi(0,x_{0},u)=x_{0}$ for all
$x_{0}\in\mathbb{R}^{d}$ and $u\in\mathcal{U}$ as well as continuous
dependence on initial values.

Consider the following stability properties for a differential equation
$\dot{x}=g(x,t)$ with (unique) solutions $\psi(t,x_{0}),t\geq0$, for initial
conditions $\psi(0,x_{0})=x_{0}$.

\begin{definition}
\label{Definition2.1}Let $\Gamma,\Lambda\subset\mathbb{R}^{d}$ and let $\zeta$
be a $\mathcal{KL}$-function. Then the system is $(\zeta,\Gamma,\Lambda
)$-stable, if every $x_{0}\in\Gamma$ satisfies%
\[
d(\psi(t,x_{0}),\Lambda)\leq\zeta(d(x_{0},\Lambda),t)\text{ for }t\geq0.
\]
For $\varepsilon>0$ it is $\varepsilon$-practically $(\zeta,\Gamma,\Lambda
)$-stable if every $x_{0}\in\Gamma$ satisfies%
\[
d(\psi(t,x_{0}),\Lambda)\leq\zeta(d(x_{0},\Lambda),t)+\varepsilon\text{ for
}t\geq0.
\]

\end{definition}

Recall that stability of an equilibrium $e$ is equivalent to the existence of
a $\mathcal{KL}$-function for $\Lambda=\{e\}$, cf. Clarke, Ledyaev and Stern
\cite[Lemma 2.6]{CLS98}. We are interested in the data rate needed to make the
control system (\ref{2.1}) stable or at least $\varepsilon$-practically stable
for certain $\varepsilon>0$ or for all $\varepsilon>0$. Again, let subsets
$\Gamma,\Lambda\subset\mathbb{R}^{d}$ and a $\mathcal{KL}$-function $\zeta$ be
given. For $\tau,\varepsilon>0$ we call a set $\mathcal{S}\subset\mathcal{U}$
of controls $(\tau,\varepsilon,\zeta,\Gamma,\Lambda)$-spanning if for every
$x_{0}\in\Gamma$ there exists $u\in\mathcal{S}$ with%
\begin{equation}
d(\varphi(t,x_{0},u),\Lambda)\leq\zeta\left(  d(x_{0},\Lambda)+\varepsilon
,t\right)  \text{ for all }t\in\lbrack0,\tau]. \label{span1}%
\end{equation}
(Here $\varepsilon>0$ is introduced in order to guarantee that finite spanning
sets exist, cf. Remark \ref{Remark2.3}.) Furthermore, a set $\mathcal{S}%
\subset\mathcal{U}$ of controls is called practically $(\tau,\varepsilon
,\zeta,\Gamma,\Lambda)$-spanning if for every $x_{0}\in\Gamma$ there exists
$u\in\mathcal{S}$ with%
\begin{equation}
d(\varphi(t,x_{0},u),\Lambda)\leq\zeta\left(  d(x_{0},\Lambda)+\varepsilon
,t\right)  +\varepsilon\text{ for all }t\in\lbrack0,\tau]. \label{span2}%
\end{equation}
The minimal cardinality of a $(\tau,\varepsilon,\zeta,\Gamma,\Lambda
)$-spanning set and a practically $(\tau,\varepsilon,\zeta,\Gamma,\Lambda
)$-spanning set are denoted by $r_{\mathrm{s}}(\tau,\varepsilon,\zeta
,\Gamma,\Lambda)$ and $r_{\mathrm{ps}}(\tau,\varepsilon,\zeta,\Gamma,\Lambda
)$, resp. If there is no finite spanning set or no spanning set at all, we set
these numbers equal to $+\infty$.

\begin{definition}
\label{definition_main}Let $\Gamma,\Lambda\subset\mathbb{R}^{d}$ and let
$\zeta$ be a $\mathcal{KL}$-function.

(i) For $\varepsilon>0$ the $\varepsilon$-stabilization entropy and the
stabilization entropy are%
\[
h_{\mathrm{s}}(\varepsilon,\zeta,\Gamma,\Lambda)=\underset{\tau\rightarrow
\infty}{\overline{\lim}}\frac{1}{\tau}\log r_{\mathrm{s}}(\tau,\varepsilon
,\zeta,\Gamma,\Lambda)\text{ and }h_{\mathrm{s}}(\zeta,\Gamma,\Lambda
)=\lim_{\varepsilon\rightarrow0}h_{\mathrm{s}}(\varepsilon,\zeta
,\Gamma,\Lambda).
\]

(ii) For $\varepsilon>0$ the\textit{ }$\varepsilon$-\textit{practical
stabilization entropy} $h_{\mathrm{ps}}(\varepsilon,\zeta,\Gamma,\Lambda)$ and
the practical stabilization entropy $h_{\mathrm{ps}}(\zeta,\Gamma,\Lambda)$
are
\begin{equation}
h_{\mathrm{ps}}(\varepsilon,\zeta,\Gamma,\Lambda)=\underset{\tau
\rightarrow\infty}{\overline{\lim}}\frac{1}{\tau}\log r_{\mathrm{ps}}%
(\tau,\varepsilon,\zeta,\Gamma,\Lambda)\text{ and }h_{\mathrm{ps}}%
(\zeta,\Gamma,\Lambda)=\lim_{\varepsilon\rightarrow0}h_{\mathrm{ps}%
}(\varepsilon,\zeta,\Gamma,\Lambda)\text{.} \label{ineq2}%
\end{equation}

\end{definition}

For $\varepsilon_{1}>\varepsilon_{2}>0$ the following inequalities are easily
seen:%
\begin{equation}
h_{\mathrm{ps}}(\varepsilon_{1},\zeta,\Gamma,\Lambda)\leq h_{\mathrm{ps}%
}(\varepsilon_{2},\zeta,\Gamma,\Lambda)\leq h_{\mathrm{ps}}(\zeta
,\Gamma,\Lambda)\leq h_{\mathrm{s}}(\zeta,\Gamma,\Lambda). \label{ineq1}%
\end{equation}
This shows, in particular, that in (\ref{ineq2}) the limit for $\varepsilon
\rightarrow0$ exists and coincides with the supremum over $\varepsilon>0$ (it
may equal $+\infty$). Our results will mainly concern compact control ranges
$U$ and compact sets $\Gamma$ and $\Lambda$, but cf. Remarks
\ref{Remark_unboundedU} and \ref{unbounded_gamma} for generalizations.
Definition \ref{definition_main} does not require that $\Gamma$ is a
neighborhood of $\Lambda$. Nevertheless, situations where $\Gamma$ is a
neighborhood of $\Lambda$ or, at least, has nonvoid interior are certainly
most interesting.

First we will ascertain that under weak assumptions finite spanning sets exist.

\begin{lemma}
\label{Lemma_basic}Consider for a control system of the form (\ref{2.1})
subsets $\Gamma,\Lambda\subset\mathbb{R}^{d}$ and a $\mathcal{KL}$-function
$\zeta$. Assume further that $\Gamma$ is compact and fix $\varepsilon>0$.

(i) Suppose that for every $x_{0}\in\Gamma$ there is a control $u\in
\mathcal{U}$ with%
\[
d(\varphi(t,x_{0},u),\Lambda)<\zeta(d(x_{0},\Lambda)+\varepsilon,t)\text{ for
all }t\geq0.
\]
Then for every $\tau>0$ there is a finite set $\mathcal{S}=\{u_{1}%
,\ldots,u_{n}\}\subset\mathcal{U}$ such that for every $x_{0}\in\Gamma$ there
is $u_{j}\in\mathcal{S}$ with%
\[
d(\varphi(t,x_{0},u_{j}),\Lambda)<\zeta\left(  d(x_{0},\Lambda)+2\varepsilon
,t\right)  \text{ for all }t\in\lbrack0,\tau].
\]

(ii) Suppose that for every $x_{0}\in\Gamma$ there is a control $u\in
\mathcal{U}$ with%
\[
d(\varphi(t,x_{0},u),\Lambda)<\zeta(d(x_{0},\Lambda)+\varepsilon
,t)+\varepsilon\text{ for all }t\geq0.
\]
Then for every $\tau>0$ there is a finite set $\mathcal{S}=\{u_{1}%
,\ldots,u_{n}\}\subset\mathcal{U}$ such that for every $x_{0}\in\Gamma$ there
is $u_{j}\in\mathcal{S}$ with%
\[
d(\varphi(t,x_{0},u_{j}),\Lambda)<\zeta\left(  d(x_{0},\Lambda)+2\varepsilon
,t\right)  +2\varepsilon\text{ for all }t\in\lbrack0,\tau].
\]

\end{lemma}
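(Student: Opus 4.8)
The plan is to prove the statement by a localization argument combined with compactness of $\Gamma$. The strict inequality assumed in the hypothesis is the crucial ingredient: over the \emph{compact} time interval $[0,\tau]$ it leaves a uniform positive margin between the trajectory distance and the $\mathcal{KL}$-bound, and this margin can absorb the perturbations caused by moving the initial condition within a small neighborhood. The gain in passing from $\varepsilon$ to $2\varepsilon$ in the conclusion supplies exactly the room needed to control the change of $d(x_0,\Lambda)$ under such perturbations.

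I would fix $\tau>0$ and, for each $x_0\in\Gamma$, choose the control $u_{x_0}\in\mathcal{U}$ furnished by the hypothesis. In case (i) the map $t\mapsto d(\varphi(t,x_0,u_{x_0}),\Lambda)$ is continuous on $[0,\tau]$, as is $t\mapsto\zeta(d(x_0,\Lambda)+\varepsilon,t)$, so the difference
\[
g_{x_0}(t):=\zeta(d(x_0,\Lambda)+\varepsilon,t)-d(\varphi(t,x_0,u_{x_0}),\Lambda)
\]
is continuous and strictly positive on the compact set $[0,\tau]$, hence attains a minimum $\delta>0$. Using that $x'\mapsto d(x',\Lambda)$ is $1$-Lipschitz and that $\varphi(\cdot,x',u_{x_0})$ depends continuously on $x'$ uniformly on $[0,\tau]$, I would pick $\rho_{x_0}\in(0,\varepsilon]$ so small that for all $x_0'\in\mathbf{B}(x_0,\rho_{x_0})$ one has both $\sup_{t\in[0,\tau]}\bigl|d(\varphi(t,x_0',u_{x_0}),\Lambda)-d(\varphi(t,x_0,u_{x_0}),\Lambda)\bigr|<\delta$ and $|d(x_0',\Lambda)-d(x_0,\Lambda)|<\varepsilon$. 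For such $x_0'$ and every $t\in[0,\tau]$ the first bound gives $d(\varphi(t,x_0',u_{x_0}),\Lambda)<d(\varphi(t,x_0,u_{x_0}),\Lambda)+\delta\le\zeta(d(x_0,\Lambda)+\varepsilon,t)$, while $d(x_0,\Lambda)+\varepsilon\le d(x_0',\Lambda)+2\varepsilon$ together with monotonicity of $\zeta$ in its first argument yields $\zeta(d(x_0,\Lambda)+\varepsilon,t)\le\zeta(d(x_0',\Lambda)+2\varepsilon,t)$. Hence the single control $u_{x_0}$ satisfies the required $2\varepsilon$-estimate for every initial value in $\mathbf{B}(x_0,\rho_{x_0})$.

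To conclude, the balls $\{\mathbf{B}(x_0,\rho_{x_0})\}_{x_0\in\Gamma}$ cover the compact set $\Gamma$, so finitely many of them, centered at $x_0^1,\dots,x_0^n$, already cover $\Gamma$; the set $\mathcal{S}=\{u_{x_0^1},\dots,u_{x_0^n}\}$ is then as claimed. Part (ii) is handled identically, defining instead $g_{x_0}(t):=\zeta(d(x_0,\Lambda)+\varepsilon,t)+\varepsilon-d(\varphi(t,x_0,u_{x_0}),\Lambda)$ and using additionally $\varepsilon\le2\varepsilon$ to pass from the additive $\varepsilon$ to the additive $2\varepsilon$ on the right-hand side. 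I expect the one point requiring care to be the \emph{uniform} continuous dependence on $[0,\tau]$ --- that closeness of $\varphi(t,x_0',u_{x_0})$ to $\varphi(t,x_0,u_{x_0})$ holds simultaneously for all $t\in[0,\tau]$, not merely pointwise --- since this is exactly what lets the fixed margin $\delta$ do its job; it is guaranteed by the standing assumption of continuous dependence on initial values for the system (\ref{2.1}).
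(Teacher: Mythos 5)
Your proposal is correct and follows essentially the same route as the paper's proof: exploit the strict inequality on the compact interval $[0,\tau]$ to get a positive margin, use uniform continuous dependence on initial values to make one control work on a whole ball of initial states (absorbing the shift in $d(x_0,\Lambda)$ via the Lipschitz property of the distance function, monotonicity of $\zeta$, and the extra $\varepsilon$), and then extract a finite subcover by compactness of $\Gamma$. The only difference is presentational: you make the margin $\delta=\min_t g_{x_0}(t)$ explicit, whereas the paper folds this step into its invocation of continuous dependence.
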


\begin{proof}
(i) For every $x_{0}\in\Gamma$ choose a control $u\in\mathcal{U}$ with%
\[
d(\varphi(t,x_{0},u),\Lambda)<\zeta(d(x_{0},\Lambda)+\varepsilon,t)\text{ for
all }t\in\lbrack0,\tau].
\]
By continuous dependence on initial values (as assumed for (\ref{2.1})) there
is $\delta$ with $0<\delta<\varepsilon$ such that for all $x_{1}\in
\mathbb{R}^{d}$ with $\left\Vert x_{0}-x_{1}\right\Vert <\delta$ and for all
$t\in\lbrack0,\tau]$%
\begin{align*}
d(\varphi(t,x_{1},u),\Lambda)  &  <\zeta(d(x_{0},\Lambda)+\varepsilon
,t)\leq\zeta(\left\Vert x_{0}-x_{1}\right\Vert +d(x_{1},\Lambda)+\varepsilon
,t)\\
&  <\zeta(\delta+d(x_{1},\Lambda)+\varepsilon,t)<\zeta(d(x_{1},\Lambda
)+2\varepsilon,t).
\end{align*}
Here we have used that for all $x,y\in\mathbb{R}^{d}$ one has $d(x,\Lambda
)\leq\left\Vert x-y\right\Vert +d(y,\Lambda)$ together with the monotonicity
properties of the $\mathcal{KL}$-function $\zeta$. Now compactness of $\Gamma$
shows that there is a finite set $\mathcal{S}=\{u_{1},\ldots,u_{n}%
\}\subset\mathcal{U}$ such that for each $x_{1}\in\Gamma$ there is $u_{j}%
\in\mathcal{S}$ satisfying for all $t\in\lbrack0,\tau]$%
\[
d(\varphi(t,x_{1},u_{j}),\Lambda)<\zeta\left(  d(x_{1},\Lambda)+2\varepsilon
,t\right)  .
\]
(ii) This is proved analogously.
\end{proof}

We observe that the assumption in Lemma \ref{Lemma_basic}(i) holds, in
particular, if there exists a stabilizing feedback. See also Section
\ref{Section4} for more on the relations to feedbacks.

The following remarks refer to related results in the literature and elaborate
on some variants of the entropy notions introduced in Definition
\ref{definition_main}.

\begin{remark}
\label{Remark2.3}In Colonius \cite{Colo12b} the following notion of entropy
for exponential stabilization about an equilibrium in the origin is
introduced. Consider a compact set $\Gamma\subset\mathbb{R}^{d}$ of initial
states, and let $\alpha>0,M>1$, and $\varepsilon>0$. For a time $\tau>0$ a
subset $\mathcal{S}\subset\mathcal{U}$ is called $(\tau,\varepsilon
,\alpha,M,\Gamma)$-spanning if for all $x_{0}\in\Gamma$ there is
$u\in\mathcal{S}$ with
\begin{equation}
\left\Vert \varphi(t,x_{0},u)\right\Vert <e^{-\alpha t}(\varepsilon
+M\left\Vert x_{0}\right\Vert )\text{ for all }t\in\lbrack0,\tau].
\label{spanning0}%
\end{equation}
The minimal cardinality of such a set is denoted by $s_{\mathrm{stab}}%
(\tau,\varepsilon,\alpha,M,\Gamma)$ and the stabilization entropy is defined
by%
\[
h_{\mathrm{stab}}(\alpha,M,\Gamma)=\lim_{\varepsilon\rightarrow0}%
\underset{\tau\rightarrow\infty}{\overline{\lim}}\frac{1}{\tau}\log
s_{\mathrm{stab}}(\tau,\varepsilon,\alpha,M,\Gamma).
\]
The spanning condition (\ref{spanning0}) can be rewritten in the following
way: With $\Lambda=\{0\}$ let a $\mathcal{KL}$-function $\zeta$ be defined by
$\zeta(r,s):=e^{-\alpha s}Mr$. With $M\varepsilon$ instead of $\varepsilon$,
condition (\ref{spanning0}) is%
\[
d(\varphi(t,x_{0},u),\{0\})<e^{-\alpha t}M(\varepsilon+\left\Vert
x_{0}\right\Vert )=\zeta(d(x_{0},\{0\})+\varepsilon,t)\text{ for all }%
t\in\lbrack0,\tau].
\]
Thus $h_{\mathrm{stab}}(\alpha,M,\Gamma)$ is a special case of
\textit{stabilization entropy as specified in Definition \ref{definition_main}%
(i). In }\cite[Proposition 2.2]{Colo12b}) it is shown that finite spanning
sets can only be expected for positive $\varepsilon>0$ in (\ref{spanning0}).
This is the reason why we also consider positive $\varepsilon$ in Definition
\ref{definition_main}(i). Observe that one could similarly relax the condition
for $\varepsilon$-practical stability in Definition \ref{Definition2.1} by
requiring%
\[
d(\psi(t,x_{0}),\Lambda)\leq\zeta(d(x_{0},\Lambda)+\varepsilon,t)+\varepsilon
\text{ for }t\geq0.
\]
Relations of entropy to minimal bit rates for (non-exponential) stabilization
are given in \cite[Lemma 5.2 and Theorem 5.3]{Colo12b}.
\end{remark}

\begin{remark}
\label{Remark4}Hamzi and Krener \cite{HamzK03} call a control system locally
practically stabilizable around an equilibrium in the origin if for every
$\varepsilon>0$ there exists an open set $D$ containing the closed ball
$\mathbf{B}(0,\varepsilon)$, a $\mathcal{KL}$-function $\zeta_{\varepsilon}$,
a positive constant $\delta=\delta(\varepsilon)$ and a control law
$u=k_{\varepsilon}(x)$ such that for any initial value $x(0)$ with $\left\Vert
x(0)\right\Vert <\delta$, the solution $x(t)$ of the feedback system $\dot
{x}=f(x,k_{\varepsilon}(x))$ exists and satisfies
\begin{equation}
d(x(t),\mathbf{B}(0,\varepsilon))\leq\zeta_{\varepsilon}\left(
d(x(0),\mathbf{B}(0,\varepsilon)),t\right)  \text{ for all }t\geq0.
\label{HK_stab}%
\end{equation}
Note that here $\delta(\varepsilon)<\varepsilon$ is admitted, hence
attractivity is not required. The trajectories may leave $\mathbf{B}%
(0,\varepsilon)$, but the bound (\ref{HK_stab}) ensures that they converge to
it for $t\rightarrow\infty$. In view of the fact, that here the $\mathcal{KL}%
$-function $\zeta_{\varepsilon}$ depends on $\varepsilon$ the following
variant of practical stabilization entropy might be considered: Define for
$\varepsilon>0$ the\textit{ }$\varepsilon$-\textit{practical stabilization
entropy} by%
\[
\hat{h}_{\mathrm{ps}}(\varepsilon,\Gamma,\Lambda)=\inf_{\zeta_{\varepsilon}%
}\underset{\tau\rightarrow\infty}{\overline{\lim}}\frac{1}{\tau}\log
r_{\mathrm{ps}}(\tau,\varepsilon,\zeta_{\varepsilon},\Gamma,\Lambda)\text{,}%
\]
where the infimum is taken over all $\mathcal{KL}$-functions $\zeta
_{\varepsilon}$, and let a\ practical stabilization entropy be $\hat
{h}_{\mathrm{ps}}(\Gamma,\Lambda)=\lim_{\varepsilon\rightarrow0}\hat
{h}_{\mathrm{ps}}(\varepsilon,\Gamma,\Lambda)$. Also a corresponding local
version might be introduced by requiring the practical $(\tau,\varepsilon
,\zeta,\Gamma,\Lambda)$-spanning condition (\ref{span2}) only for all
$x_{0}\in\Gamma_{\varepsilon}$, where $\Gamma_{\varepsilon}$ is compact
neighborhood of $\Lambda$. If one requires this spanning condition for all
initial values $x_{0}$ in a compact neighborhood $\Gamma_{\delta(\varepsilon
)}$ of $\Lambda$ containing a $\delta(\varepsilon)$-neighborhood of $\Lambda$,
$\delta(\varepsilon)>0$, one obtains a local notion without attractivity requirement.
\end{remark}

\begin{remark}
For control-affine systems, Da Silva and Kawan define in \cite{DaSiK18a} a
version of invariance entropy (for \textquotedblleft practical
stabilization\textquotedblright) in the special situation, where (in our
notation) $\Gamma=\Lambda$ is a compact subset of a control set $D$ with
nonvoid interior (i.e., a maximal set with approximate controllability). Then
they consider the maximum of the corresponding entropies taken over all
$\Gamma=\Lambda$ contained in $D$. Under a uniform hyperbolicity condition for
$\mathrm{cl}D$, \cite[Theorem 9]{DaSiK18a} shows that the corresponding
entropy varies continuously with respect to system parameters.
\end{remark}

\begin{remark}
\label{Remark_unboundedU}In the examples in Subsections \ref{Subsection5.2}
and \ref{Subsection5.3} (cf. Theorem \ref{Theorem5.3} and Theorem
\ref{Theorem_unbounded}) also unbounded closed control ranges $U$ occur, where
it will be appropriate to employ a reduction to compact control ranges by
using the following modified notion: For compact sets $K\subset\mathbb{R}^{m}$
a subset $\mathcal{S}\subset\mathcal{U}$ of controls with values in $U\cap K$
is practically $(\tau,\varepsilon,\zeta,\Gamma,\Lambda,U\cap K)$-spanning if
for every $x_{0}\in\Gamma$ there exists $u\in\mathcal{S}$ with%
\[
d(\varphi(t,x_{0},u),\Lambda)\leq\zeta\left(  d(x_{0},\Lambda)+\varepsilon
,t\right)  +\varepsilon\text{ for all }t\in\lbrack0,\tau].
\]
Denoting the minimal cardinality of such a spanning set by $r_{\mathrm{ps}%
}(\tau,\varepsilon,\zeta,\Gamma,\Lambda,U\cap K)$ we define the\textit{
}$\varepsilon$-\textit{practical stabilization entropy} by
\[
h_{\mathrm{ps}}(\varepsilon,\zeta,\Gamma,\Lambda,U)=\inf_{K}\underset
{\tau\rightarrow\infty}{\overline{\lim}}\frac{1}{\tau}\log r_{\mathrm{ps}%
}(\tau,\varepsilon,\zeta,\Gamma,\Lambda,U\cap K)\text{,}%
\]
where the infimum is taken over all compact subsets $K\subset\mathbb{R}^{m}$.
Then the practical stabilization entropy again is obtained by letting
$\varepsilon\rightarrow0$. In the case of an exponential $\mathcal{KL}%
$-function $\zeta(r,s)=e^{-\alpha s}Mr$, the relevant quantity is the
exponential rate $\alpha$. In the examples in Subsections \ref{Subsection5.2}
and \ref{Subsection5.3} constants $M$ which depend on $\varepsilon$ occur
while $\alpha$ does not.
\end{remark}

\begin{remark}
\label{unbounded_gamma}In the theory developed below, compactness of the set
$\Gamma$ of initial states plays a crucial role. For general closed sets
$\Gamma$ a reasonable notion of practical stabilization entropy might be
introduced as $h_{\mathrm{ps}}(\zeta,\Gamma,\Lambda):=\sup_{K}h_{\mathrm{ps}%
}(\zeta,\Gamma\cap K,\Lambda)$, where the supremum is taken over all compact
sets $K\subset\mathbb{R}^{d}$. This is in the same spirit as the definition of
topological entropy for uniformly continuous maps on metric spaces, cf.
Walters \cite[Definition 7.10]{Walt82}.
\end{remark}

\section{Bounds for practical stabilization entropy\label{Section3}}

In this section we derive upper and lower bounds for the practical
stabilization entropy and the stabilization entropy.

First we present an upper bound for the $\varepsilon$-practical stabilization
entropy. The proof is based on a cut-off function and is a modification of the
proofs in Katok and Hasselblatt \cite[Theorem 3.3.9]{KatH95} (for topological
entropy) as well as Colonius and Kawan \cite[Theorem 4.2]{ColoK09a} (for
invariance entropy).

For compact sets $\Gamma,\Lambda\subset\mathbb{R}^{d}$, a $\mathcal{KL}%
$-function $\zeta$, and $\varepsilon\geq0$ define the compact set%
\[
P_{\varepsilon}:=\left\{  x\in\mathbb{R}^{d}\left\vert d(x,\Lambda)\leq
\zeta(\max_{y\in\Gamma}d(y,\Lambda)+\varepsilon,0)+\varepsilon\right.
\right\}
\]
and define the constant $L_{\varepsilon}:=\max_{(x,u)\in P_{\varepsilon}\times
U}\left\Vert f_{x}(x,u)\right\Vert <\infty$ where $f_{x}(x,u)=\frac{\partial
f}{\partial x}(x,u)$ and $U$ is compact. Observe that $L_{\varepsilon}$
depends on $\zeta$ and $\Gamma$ and, naturally, on $\Lambda$.

\begin{theorem}
\label{Theorem_upper}Consider for control system (\ref{2.1}) compact sets
$\Gamma,\Lambda\subset\mathbb{R}^{d}$ and let $\zeta$ be $\mathcal{KL}%
$-function. Suppose that the control range $U$ is compact, that $f$ is
continuous and $f$ is differentiable with respect to $x$ and the partial
derivative $f_{x}(x,u)$ is continuous in $(x,u)$.

(i) Fix $\varepsilon>0$. If for every $x_{0}\in\Gamma$ there is a control
$u\in\mathcal{U}$ with%
\begin{equation}
d(\varphi(t,x_{0},u),\Lambda)<\zeta(d(x_{0},\Lambda)+\varepsilon
,t)+\varepsilon\text{ for all }t\geq0, \label{ass2}%
\end{equation}
then the $2\varepsilon$-practical stabilization entropy satisfies
$h_{\mathrm{ps}}(2\varepsilon,\zeta,\Gamma,\Lambda)\leq L_{\varepsilon}d$.

(ii) If the assumption in (i) hold for all $\varepsilon>0$, the
\textit{practical stabilization entropy} satisfies $h_{\mathrm{ps}}%
(\zeta,\Gamma,\Lambda)\leq L_{0}d$.
\end{theorem}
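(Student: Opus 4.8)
The plan is to combine a Gronwall-type sensitivity estimate for trajectories with a covering-number count of $\Gamma$, exactly as in the topological- and invariance-entropy arguments cited. The starting observation is that every trajectory appearing in the spanning condition stays in the fixed compact set $P_{\varepsilon}$: for $x_{0}\in\Gamma$ and a control $u$ realizing (\ref{ass2}), the monotonicity of $\zeta$ gives
\[
d(\varphi(t,x_{0},u),\Lambda)<\zeta(d(x_{0},\Lambda)+\varepsilon,t)+\varepsilon\leq\zeta(\max_{y\in\Gamma}d(y,\Lambda)+\varepsilon,0)+\varepsilon,
\]
so $\varphi(t,x_{0},u)\in P_{\varepsilon}$ for all $t\geq0$. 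Hence only the behaviour of $f$ on $P_{\varepsilon}\times U$ matters, and $L_{\varepsilon}$ is the relevant Lipschitz bound there.

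First I would fix $\tau>0$, choose a small $\eta>0$, and---this is the technical device the proof rests on---replace $f$ by a cut-off field $\tilde{f}$ that agrees with $f$ on the $\eta$-neighborhood $\hat{P}_{\varepsilon}:=N(P_{\varepsilon};\eta)$ and is globally defined and Lipschitz, so that the modified solutions exist on all of $[0,\tau]$ and Gronwall's inequality is available without worrying about escape. For each $x_{0}\in\Gamma$ pick a control $u_{x_{0}}$ realizing (\ref{ass2}) on $[0,\tau]$, and compare, for the same control, the trajectory from $x_{0}$ with that from a nearby point $x_{1}$. Writing $w(t):=\varphi(t,x_{1},u_{x_{0}})-\varphi(t,x_{0},u_{x_{0}})$ and using that the segment joining the two trajectory points lies in $\hat{P}_{\varepsilon}$ (one endpoint is in $P_{\varepsilon}$, the other within $\eta$ of it), the local bound $\|f_{x}\|\leq L_{\varepsilon}+\delta_{0}$ on $\hat{P}_{\varepsilon}$, valid with $\delta_{0}=\delta_{0}(\eta)\to0$ by continuity of $f_{x}$, yields $\|w(t)\|\leq\|x_{1}-x_{0}\|\,e^{(L_{\varepsilon}+\delta_{0})t}$. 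A continuation argument keeps both trajectories in $\hat{P}_{\varepsilon}$ throughout $[0,\tau]$ as long as $\|x_{1}-x_{0}\|\,e^{(L_{\varepsilon}+\delta_{0})\tau}<\eta$, which also makes $\tilde{f}$ and $f$ coincide along them.

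With this sensitivity estimate the counting step is routine. If $\|x_{1}-x_{0}\|\leq\rho:=\varepsilon\,e^{-(L_{\varepsilon}+\delta_{0})\tau}$, then $\rho<\varepsilon$ gives $\zeta(d(x_{0},\Lambda)+\varepsilon,t)\leq\zeta(d(x_{1},\Lambda)+2\varepsilon,t)$ by monotonicity, while $\|w(t)\|\leq\rho\,e^{(L_{\varepsilon}+\delta_{0})\tau}=\varepsilon$ supplies the second $\varepsilon$, so that
\[
d(\varphi(t,x_{1},u_{x_{0}}),\Lambda)\leq d(\varphi(t,x_{0},u_{x_{0}}),\Lambda)+\|w(t)\|<\zeta(d(x_{1},\Lambda)+2\varepsilon,t)+2\varepsilon
\]
for all $t\in[0,\tau]$. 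Thus one control serves every initial point in a ball of radius $\rho$, and choosing one control per ball of a minimal $\rho$-cover of the compact set $\Gamma$ produces a practically $(\tau,2\varepsilon,\zeta,\Gamma,\Lambda)$-spanning set of cardinality at most the covering number $N(\Gamma,\rho)\leq C_{\Gamma}\,\rho^{-d}$. Therefore
\[
\tfrac{1}{\tau}\log r_{\mathrm{ps}}(\tau,2\varepsilon,\zeta,\Gamma,\Lambda)\leq\tfrac{1}{\tau}\big(\log C_{\Gamma}-d\log\varepsilon\big)+(L_{\varepsilon}+\delta_{0})d,
\]
and letting $\tau\to\infty$ and then $\eta\to0$ (so $\delta_{0}\to0$) gives $h_{\mathrm{ps}}(2\varepsilon,\zeta,\Gamma,\Lambda)\leq L_{\varepsilon}d$, which is (i). For (ii) I would let $\varepsilon\to0$: the sets $P_{\varepsilon}$ decrease to $P_{0}$, whence $L_{\varepsilon}\to L_{0}$ by continuity of $f_{x}$, and since $h_{\mathrm{ps}}(\zeta,\Gamma,\Lambda)=\lim_{\varepsilon\to0}h_{\mathrm{ps}}(2\varepsilon,\zeta,\Gamma,\Lambda)$ by (\ref{ineq1}), the bound $h_{\mathrm{ps}}(\zeta,\Gamma,\Lambda)\leq L_{0}d$ follows. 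The main obstacle I anticipate---and the reason the cut-off is needed---is that $P_{\varepsilon}$ is a sublevel set of the distance function and hence generally non-convex, so one cannot directly bound $f$ along the segment between two trajectory points; the care lies in showing that the perturbed trajectory never leaves the slightly enlarged $\hat{P}_{\varepsilon}$, so that the effective Lipschitz constant is $L_{\varepsilon}+\delta_{0}$ with $\delta_{0}$ controllable rather than a much larger global constant from the cut-off, which is what pins the exponent down to $L_{\varepsilon}d$.
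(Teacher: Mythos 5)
Your proposal is correct up to one easily repaired inconsistency, and it executes the shared strategy (cut-off field, Gronwall estimate, covering of $\Gamma$) in a way that differs from the paper precisely at its most delicate point. The paper introduces a minimal Bowen-type set of pairs $(y_{i},u_{i})\in R_{\varepsilon+\tilde{\varepsilon}}$ whose trajectories $\tilde{\varepsilon}$-shadow all of $\Gamma$, applies Gronwall with the \emph{global} Lipschitz constant $\tilde{L}$ of the truncated field $\tilde{f}=\theta f$, and then invokes the convergence $\tilde{L}_{\varepsilon+3\tilde{\varepsilon}}\rightarrow L_{\varepsilon}$; since $\tilde{f}_{x}=\theta_{x}f+\theta f_{x}$ contains the derivative of the cut-off, whose size grows as the transition region shrinks, that convergence is exactly the step one must worry about in the paper's argument. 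You instead use the cut-off only to guarantee global existence and run Gronwall with the bound $L_{\varepsilon}+\delta_{0}(\eta)$ on $f_{x}$ over $N(P_{\varepsilon};\eta)$, keeping the perturbed trajectory inside that neighborhood by a continuation argument, so the cut-off's derivative never enters; this is a cleaner way of pinning the exponent to $L_{\varepsilon}$. You also replace the paper's minimal-spanning-set and box-dimension count by a direct cover of $\Gamma$ by $\rho$-balls with one control per center and the crude bound $N(\Gamma,\rho)\leq C_{\Gamma}\rho^{-d}$; this suffices for the stated bound $L_{\varepsilon}d$, though it gives away the slightly sharper estimate $L_{\varepsilon}\dim_{F}(\Gamma)$ that the paper's proof actually yields.

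The slip: your sensitivity estimate is only valid while $\Vert w(t)\Vert<\eta$ (otherwise the segment joining the two trajectory points may leave $\hat{P}_{\varepsilon}$), i.e. for $\Vert x_{1}-x_{0}\Vert e^{(L_{\varepsilon}+\delta_{0})\tau}<\eta$, but your counting step takes balls of radius $\rho=\varepsilon e^{-(L_{\varepsilon}+\delta_{0})\tau}$, and typically $\eta\ll\varepsilon$ since $\eta$ must be small to make $\delta_{0}$ small while $\varepsilon$ is fixed. The repair is to set $\rho:=\min(\varepsilon,\eta)\,e^{-(L_{\varepsilon}+\delta_{0})\tau}$: the continuation argument then applies throughout $[0,\tau]$, the resulting bound $\Vert w(t)\Vert\leq\min(\varepsilon,\eta)\leq\varepsilon$ still delivers the practically $(\tau,2\varepsilon,\zeta,\Gamma,\Lambda)$-spanning property, and the extra factor $\min(\varepsilon,\eta)^{-d}$ in the covering number disappears under $\frac{1}{\tau}\log(\cdot)$ as $\tau\rightarrow\infty$, so the conclusion of (i), and then (ii) exactly as you and the paper argue via $L_{\varepsilon}\rightarrow L_{0}$, is unaffected.
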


\begin{proof}
Define for $\varepsilon\geq0$%
\[
R_{\varepsilon}:=\{(x_{0},u)\in\Gamma\times\mathcal{U}\left\vert
d(\varphi(t,x_{0},u),\Lambda)\leq\zeta(d(x_{0},\Lambda)+\varepsilon
,t)+\varepsilon\text{ for all }t\geq0\right.  \}.
\]
Note that every $(x_{0},u)\in R_{\varepsilon}$ satisfies $\varphi
(t,x_{0},u)\in P_{\varepsilon}$ for $t\geq0$, since%
\[
d(\varphi(t,x_{0},u),\Lambda)\leq\zeta(d(x_{0},\Lambda)+\varepsilon
,t)+\varepsilon\leq\zeta(\max_{y\in\Gamma}d(y,\Lambda)+\varepsilon
,0)+\varepsilon,
\]
using that $\zeta$ is increasing in the first argument and decreasing in the
second argument.

(i) Fix $\varepsilon>0$ and let $\tilde{\varepsilon},\tau>0$ be given. Since
the compact set $P_{\varepsilon+2\tilde{\varepsilon}}$ is contained in the
interior of $P_{\varepsilon+3\tilde{\varepsilon}}$ one can choose a $C^{1}%
$-function $\theta:\mathbb{R}^{d}\rightarrow\lbrack0,1]$ with $\theta(x)=1$
for all $x\in P_{\varepsilon+2\tilde{\varepsilon}}$ and support contained in
$P_{\varepsilon+3\tilde{\varepsilon}}$ (cf. Abraham, Marsden and Ratiu
\cite[Prop.~5.5.8, p.~380]{AbrMR88}). We define $\tilde{f}:\mathbb{R}%
^{d}\times\mathbb{R}^{m}\rightarrow\mathbb{R}^{d}$ by $\tilde{f}%
(x,u):=\theta(x)f(x,u)$ (note that $\tilde{f}$ depends on $\tilde{\varepsilon
}$). Then $\tilde{f}$ is continuous and the derivative with respect to the
first argument is continuous in $(x,u)$. Consider the control system
\begin{equation}
\dot{x}(t)=\tilde{f}(x(t),u(t)),\ \ u(t)\in U. \label{eq_newcs}%
\end{equation}
The right hand side of this system is globally bounded and thus solutions
exist globally (see e.g.~Sontag \cite[Prop.~C.3.7]{Sont98}). We denote the
solution map associated with \eqref{eq_newcs} by $\psi$ and observe that%
\begin{align*}
&  \left(  \psi([0,\tau],x_{0},u)\subset P_{\varepsilon+2\tilde{\varepsilon}%
}\text{ or }\varphi([0,\tau],x_{0},u)\subset P_{\varepsilon+2\tilde
{\varepsilon}}\right) \\
&  \Rightarrow\ \psi(t,x_{0},u)=\varphi(t,x_{0},u)\mbox{\ \ for all\ }t\in
\lbrack0,\tau].
\end{align*}
A global Lipschitz constant for $\tilde{f}$ on $\mathbb{R}^{d}\times U$ with
respect to the first variable is given by
\begin{equation}
\tilde{L}:=\max\left\{  \left\Vert {}\right.  \tilde{f}_{x}(x,u)\left.
{}\right\Vert \left\vert (x,u)\in\mathbb{R}^{d}\times U\right.  \right\}  ,
\label{L_eps0}%
\end{equation}
which satisfies%
\[
\tilde{L}=\tilde{L}_{\varepsilon+3\tilde{\varepsilon}}:=\max\left\{
\left\Vert {}\right.  \tilde{f}_{x}(x,u)\left.  {}\right\Vert \left\vert
(x,u)\in P_{\varepsilon+3\tilde{\varepsilon}}\times U\right.  \right\}  .
\]
Using continuity of $\tilde{f}_{x}$ and $\zeta$ and compactness of
$P_{\varepsilon+3\tilde{\varepsilon}}\times U$ one finds%
\begin{equation}
\tilde{L}_{\varepsilon+3\tilde{\varepsilon}}\rightarrow L_{\varepsilon}\text{
for }\tilde{\varepsilon}\rightarrow0. \label{L_eps2}%
\end{equation}
Every $(y,u)$ in $R_{\varepsilon+\tilde{\varepsilon}}$ satisfies
$\varphi(t,y,u)\in P_{\varepsilon+\tilde{\varepsilon}}$ and hence
$\varphi(t,y,u)=\psi(t,y,u),t\geq0$.

Now let $\mathcal{S}^{+}=\{(y_{1},u_{1}),\ldots,(y_{n},u_{n})\}\subset
R_{\varepsilon+\tilde{\varepsilon}}$ be a subset with the property that for
every $x_{0}\in\Gamma$ there exists $(y_{i},u_{i})\in\mathcal{S}^{+}$ with%
\[
\max_{t\in\lbrack0,\tau]}d(\psi(t,x_{0},u_{i}),\psi(t,y_{i},u_{i}%
))<\tilde{\varepsilon}.
\]
Thus also $\psi(t,x_{0},u_{i})=\varphi(t,x_{0},u_{i})$, since $\varphi
(t,x_{0},u_{i})\in P_{\varepsilon+2\tilde{\varepsilon}},t\in\lbrack0,\tau]$.
By continuity and compactness, we may in fact assume that $\mathcal{S}^{+}$
has finite cardinality, and we take $\mathcal{S}^{+}$ with minimal cardinality
denoted by $r^{+}(\tau,\tilde{\varepsilon})$. We claim that for $2\tilde
{\varepsilon}<\varepsilon$,%
\begin{equation}
r_{\mathrm{ps}}(\tau,2\varepsilon,\zeta,\Gamma,\Lambda)\leq r_{\mathrm{ps}%
}(\tau,\varepsilon+2\tilde{\varepsilon},\zeta,\Gamma,\Lambda)\leq r^{+}%
(\tau,\tilde{\varepsilon}). \label{strong}%
\end{equation}
The first inequality follows by monotonicity in the second argument. The
second inequality follows, since for a minimal set $\mathcal{S}^{+}$ as above
and $x_{0}\in\Gamma$ there are $(y_{i},u_{i})\in\mathcal{S}^{+}$ such that for
all $t\in\lbrack0,\tau]$%
\begin{align*}
d(\psi(t,x_{0},u_{i}),\Lambda)  &  <d(\psi(t,x_{0},u_{i}),\psi(t,y_{i}%
,u_{i}))+d(\psi(t,y_{i},u_{i}),\Lambda)\\
&  <\tilde{\varepsilon}+\zeta(d(y_{i},\Lambda)+\varepsilon+\tilde{\varepsilon
},t)+\varepsilon+\tilde{\varepsilon}\\
&  \leq\zeta(\left\Vert y_{i}-x_{0}\right\Vert +d(x_{0},\Lambda)+\varepsilon
+\tilde{\varepsilon},t)+\varepsilon+2\tilde{\varepsilon}\\
&  <\zeta(d(x_{0},\Lambda)+\varepsilon+2\tilde{\varepsilon},t)+\varepsilon
+2\tilde{\varepsilon}.
\end{align*}
Since $\psi(t,x_{0},u_{i})=\varphi(t,x_{0},u_{i}),t\in\lbrack0,\tau]$, it
follows that $\mathcal{S}^{+}$ is practically $(\tau,\varepsilon
+2\tilde{\varepsilon},\zeta,\Gamma,\Lambda)$-spanning, and (\ref{strong}) is proved.

Next define the sets
\[
\Gamma_{i}:=\left\{  x_{0}\in\Gamma\ |\ \max_{t\in\lbrack0,\tau]}%
d(\psi(t,x_{0},u_{i}),\psi(t,y_{i},u_{i}))<\tilde{\varepsilon}\right\}
,\ \ i=1,\ldots,n=r^{+}(\tau,\tilde{\varepsilon}),
\]
By the definitions, $\Gamma=\bigcup_{i=1}^{n}\Gamma_{i}$. Let $x_{0}%
\in\mathbb{R}^{d}$ be a point with $\Vert x_{0}-y_{i}\Vert<e^{-\tilde{L}\tau
}\tilde{\varepsilon}$ for some $i\in\{1,\ldots,r^{+}(\tau,\tilde{\varepsilon
})\}$. By (\ref{L_eps0}) it follows that
\begin{equation}
\Vert\psi(t,x_{0},u_{i})-\psi(t,y_{i},u_{i})\Vert\leq\Vert x_{0}-y_{i}%
\Vert+\tilde{L}\int_{0}^{t}\Vert\psi(\sigma,x_{0},u_{i})-\psi(\sigma
,y_{i},u_{i})\Vert d\sigma\label{lipest}%
\end{equation}
for all $t\geq0$. By Gronwall's Lemma this implies for all $t\in\lbrack
0,\tau]$,%
\begin{equation}
\Vert\psi(t,x_{0},u_{i})-\psi(t,y_{i},u_{i})\Vert\leq\Vert x_{0}-y_{i}\Vert
e^{\tilde{L}t}<\tilde{\varepsilon}. \label{Gron}%
\end{equation}
It follows that $x_{0}\in\Gamma_{i}$ and thus $\Gamma$ contains the union of
the balls $\mathbf{B}(y_{i},e^{-\tilde{L}\tau}\tilde{\varepsilon})$.

Now assume that there exists a cover $\mathcal{V}$ of $\Gamma$ consisting of
balls $\mathbf{B}(x_{i},e^{-\tilde{L}\tau}\tilde{\varepsilon}),x_{i}\in\Gamma$
for $i=1,\ldots,N$, such that $N=\#\mathcal{V}<\#\mathcal{S}^{+}=r^{+}%
(\tau,\tilde{\varepsilon})$. By assumption (\ref{ass2}) we can assign to each
point $x_{i}$ a control function $v_{i}$ with $(x_{i},v_{i})\in R_{\varepsilon
+\tilde{\varepsilon}}$. Then, by the arguments above, the ball $\mathbf{B}%
(x_{i},e^{-\tilde{L}\tau}\tilde{\varepsilon})$ is contained in the set
\[
\left\{  x_{0}\in\mathbb{R}^{d}\ |\ \max_{t\in\lbrack0,\tau]}d(\psi
(t,x_{0},v_{i}),\psi(t,x_{i},v_{i}))<\tilde{\varepsilon}\right\}  .
\]
This contradicts the minimality of $\mathcal{S}^{+}$. Let $c(\delta,Z)$ is the
minimal cardinality of a cover of a bounded subset $Z\subset\mathbb{R}^{d}$ by
$\delta$-balls. We have shown that $r^{+}(\tau,\tilde{\varepsilon})\leq
c(\delta,\Gamma)$ with $\delta:=e^{-\tilde{L}\tau}\tilde{\varepsilon}$.

Recall that for a bounded subset $Z\subset\mathbb{R}^{d}$ the upper box or
fractal dimension satisfies%
\[
\dim_{F}(Z):=\overline{\lim_{\delta\searrow0}}\frac{\log c(\delta,Z)}%
{\log(1/\delta)}\leq d,
\]
cf. e.g. Boichenko, Leonov, and Reitmann \cite[Proposition 2.2.2 in Chapter
III]{05BoiLR}. Since%
\[
\tilde{L}\tau=\log(e^{\tilde{L}\tau}\tilde{\varepsilon}^{-1})+\log
\tilde{\varepsilon}=\log(e^{\tilde{L}\tau}\tilde{\varepsilon}^{-1})\left(
1+\frac{\log\tilde{\varepsilon}}{\log(e^{\tilde{L}\tau}\tilde{\varepsilon
}^{-1})}\right)  ,
\]
it follows that%
\begin{align}
&  \underset{\tau\rightarrow\infty}{\overline{\lim}}\frac{1}{\tau}\log
r^{+}(\tau,\tilde{\varepsilon})\leq\underset{\tau\rightarrow\infty}%
{\overline{\lim}}\frac{1}{\tau}\log c(e^{-\tilde{L}\tau}\tilde{\varepsilon
},\Gamma)=\tilde{L}\underset{\tau\rightarrow\infty}{\overline{\lim}}\frac{\log
c(e^{-\tilde{L}\tau}\tilde{\varepsilon},\Gamma)}{\tilde{L}\tau}\nonumber\\
&  =\tilde{L}\underset{\tau\rightarrow\infty}{\overline{\lim}}\frac{\log
c(e^{-\tilde{L}\tau}\tilde{\varepsilon},\Gamma)}{\log(e^{\tilde{L}\tau}%
\tilde{\varepsilon}^{-1})(1+\frac{\log\tilde{\varepsilon}}{\log(e^{\tilde
{L}\tau}\tilde{\varepsilon}^{-1})})}=\tilde{L}\underset{\tau\rightarrow\infty
}{\overline{\lim}}\frac{\log c(e^{-\tilde{L}\tau}\tilde{\varepsilon},\Gamma
)}{\log(e^{\tilde{L}\tau}\tilde{\varepsilon}^{-1})}\label{h_strong}\\
&  =\tilde{L}\dim_{F}(\Gamma).\nonumber
\end{align}
As $\tilde{\varepsilon}$ tends to zero, the Lipschitz constants $\tilde
{L}=\tilde{L}_{\varepsilon+3\tilde{\varepsilon}}$ tend to $L_{\varepsilon}$ by
(\ref{L_eps2}). Taking into account also (\ref{strong}), this implies%
\[
h_{\mathrm{ps}}(2\varepsilon,\zeta,\Gamma,\Lambda)\leq\left(  \lim
_{\tilde{\varepsilon}\rightarrow0}\hat{L}_{\varepsilon+3\tilde{\varepsilon}%
}\right)  \dim_{F}(\Gamma)=L_{\varepsilon}\dim_{F}(\Gamma)\leq L_{\varepsilon
}d,
\]
which proves assertion (i).

(ii) If the assumptions in (i) hold for all $\varepsilon>0$, then the
Lipschitz constants $L_{\varepsilon}$ converge for $\varepsilon\rightarrow0$
to $L_{0}$ and the assertion follows from
\[
h_{\mathrm{ps}}(\zeta,\Gamma,\Lambda)=\lim_{\varepsilon\rightarrow
0}h_{\mathrm{ps}}(\varepsilon,\zeta,\Gamma,\Lambda)\leq\lim_{\varepsilon
\rightarrow0}L_{\varepsilon}d=L_{0}d.
\]

\end{proof}

The following theorem gives a similar estimate for the stabilization entropy
with exponential $\mathcal{KL}$-function. For compact control range $U$ and
$\varepsilon\geq0,M\geq1$, define the compact set%
\[
P_{\varepsilon}^{\mathrm{s}}:=\left\{  x\in\mathbb{R}^{d}\left\vert
d(x,\Lambda)\leq M(\max_{y\in\Gamma}d(y,\Lambda)+\varepsilon)\right.
\right\}
\]
and the constant $L_{0}^{\mathrm{s}}:=\max_{(x,u)\in P_{0}^{\mathrm{s}}\times
U}\left\Vert f_{x}(x,u)\right\Vert $.

\begin{theorem}
\label{Theorem_pract}Consider an exponential $\mathcal{KL}$-function
$\zeta(r,s)=e^{-\alpha s}Mr,r,s\geq0$, with constants $\alpha>0,M\geq1$, and
suppose that the assumptions of Theorem \ref{Theorem_upper} are satisfied for
control system (\ref{2.1}). Assume that for every $\varepsilon>0$ and for
every $x_{0}\in\Gamma$ there is a control $u\in\mathcal{U}$ with%
\begin{equation}
d(\varphi(t,x_{0},u),\Lambda)<e^{-\alpha t}M(d(x_{0},\Lambda)+\varepsilon
)\text{ for all }t\geq0. \label{ass3}%
\end{equation}
Then the \textit{stabilization entropy} satisfies $h_{\mathrm{s}}(\zeta
,\Gamma,\Lambda)\leq(L_{0}^{\mathrm{s}}+\alpha)d$.
\end{theorem}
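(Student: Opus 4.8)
The plan is to run the cut-off and Gronwall scheme from the proof of Theorem \ref{Theorem_upper}, the one genuinely new feature being that the prescribed exponential decay $e^{-\alpha t}$ in the spanning bound tightens the closeness requirement at rate $e^{(\tilde L+\alpha)t}$ rather than $e^{\tilde L t}$; it is exactly this combination of the Gronwall growth $e^{\tilde L t}$ with the decay $e^{-\alpha t}$ that produces the extra summand $\alpha$ in the bound. Fix $\varepsilon>0$ and $\tilde\varepsilon\in(0,\varepsilon/2)$, write $L_{\varepsilon}^{\mathrm{s}}:=\max_{(x,u)\in P_{\varepsilon}^{\mathrm{s}}\times U}\|f_x(x,u)\|$, set
\[
R_{\eta}^{\mathrm{s}}:=\{(x_0,u)\in\Gamma\times\mathcal{U}\mid d(\varphi(t,x_0,u),\Lambda)<e^{-\alpha t}M(d(x_0,\Lambda)+\eta)\text{ for all }t\geq0\},
\]
and note that every $(y,u)\in R_{\eta}^{\mathrm{s}}$ keeps $\varphi(t,y,u)\in P_{\eta}^{\mathrm{s}}$ for all $t\geq0$, since $e^{-\alpha t}\leq1$ and $d(y,\Lambda)\leq\max_{z\in\Gamma}d(z,\Lambda)$. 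Exactly as in Theorem \ref{Theorem_upper}, I would pick a $C^1$ cut-off $\theta$ equal to $1$ on $P_{\varepsilon+2\tilde\varepsilon}^{\mathrm{s}}$ with support in $P_{\varepsilon+3\tilde\varepsilon}^{\mathrm{s}}$, form $\tilde f:=\theta f$ with globally defined solution map $\psi$ and global Lipschitz constant $\tilde L=\tilde L_{\varepsilon+3\tilde\varepsilon}^{\mathrm{s}}$ in the first variable, and use that $\tilde L_{\varepsilon+3\tilde\varepsilon}^{\mathrm{s}}\to L_{\varepsilon}^{\mathrm{s}}$ as $\tilde\varepsilon\to0$ and that $\psi=\varphi$ along any trajectory staying in $P_{\varepsilon+2\tilde\varepsilon}^{\mathrm{s}}$.

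The heart of the argument is the following estimate. Let $(y,u)\in R_{\varepsilon+\tilde\varepsilon}^{\mathrm{s}}$, so $\varphi(t,y,u)=\psi(t,y,u)$ lies in $P_{\varepsilon+\tilde\varepsilon}^{\mathrm{s}}$, and let $x_0$ satisfy $\|x_0-y\|\leq\delta$. Gronwall's lemma applied to $\psi$ gives $\|\psi(t,x_0,u)-\psi(t,y,u)\|\leq\delta e^{\tilde L t}$ on $[0,\tau]$, whence, using the triangle inequality, the bound from $R_{\varepsilon+\tilde\varepsilon}^{\mathrm{s}}$, and $d(y,\Lambda)\leq\|x_0-y\|+d(x_0,\Lambda)$,
\[
d(\psi(t,x_0,u),\Lambda)\leq\delta e^{\tilde L t}+e^{-\alpha t}M\delta+e^{-\alpha t}M(d(x_0,\Lambda)+\varepsilon+\tilde\varepsilon)=e^{-\alpha t}M\Big(d(x_0,\Lambda)+\varepsilon+\tilde\varepsilon+\delta\big(1+M^{-1}e^{(\tilde L+\alpha)t}\big)\Big).
\]
Choosing $\delta=\delta(\tau):=\tilde\varepsilon\big(1+M^{-1}e^{(\tilde L+\alpha)\tau}\big)^{-1}$ makes the last summand at most $\tilde\varepsilon$ throughout $[0,\tau]$, so $d(\psi(t,x_0,u),\Lambda)\leq e^{-\alpha t}M(d(x_0,\Lambda)+\varepsilon+2\tilde\varepsilon)=\zeta(d(x_0,\Lambda)+\varepsilon+2\tilde\varepsilon,t)$. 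In particular $\psi(t,x_0,u)\in P_{\varepsilon+2\tilde\varepsilon}^{\mathrm{s}}$, so $\psi=\varphi$ along the $x_0$-trajectory and the estimate holds for $\varphi$; crucially, there is no additive constant on the right, so this is a genuine $(\tau,\varepsilon+2\tilde\varepsilon,\zeta,\Gamma,\Lambda)$-spanning inequality controlling $r_{\mathrm{s}}$ and not merely $r_{\mathrm{ps}}$.

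Next I would cover $\Gamma$ by $c(\delta(\tau),\Gamma)$ balls $\mathbf{B}(x_i,\delta(\tau))$ with $x_i\in\Gamma$, assign to each center a control $u_i$ with $(x_i,u_i)\in R_{\varepsilon+\tilde\varepsilon}^{\mathrm{s}}$ via (\ref{ass3}), and conclude from the previous paragraph that $\{u_1,\ldots\}$ is $(\tau,\varepsilon+2\tilde\varepsilon,\zeta,\Gamma,\Lambda)$-spanning; hence $r_{\mathrm{s}}(\tau,2\varepsilon,\zeta,\Gamma,\Lambda)\leq r_{\mathrm{s}}(\tau,\varepsilon+2\tilde\varepsilon,\zeta,\Gamma,\Lambda)\leq c(\delta(\tau),\Gamma)$ by monotonicity (here $2\tilde\varepsilon<\varepsilon$ is used). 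Since $\log(1/\delta(\tau))=(\tilde L+\alpha)\tau+O(1)$ as $\tau\to\infty$, the same manipulation as in (\ref{h_strong}), together with $\dim_F(\Gamma)\leq d$, yields
\[
h_{\mathrm{s}}(2\varepsilon,\zeta,\Gamma,\Lambda)\leq\overline{\lim}_{\tau\rightarrow\infty}\frac1\tau\log c(\delta(\tau),\Gamma)=(\tilde L+\alpha)\dim_F(\Gamma).
\]
As the left-hand side is independent of $\tilde\varepsilon$, letting $\tilde\varepsilon\to0$ replaces $\tilde L$ by $L_{\varepsilon}^{\mathrm{s}}$, and then letting $\varepsilon\to0$ gives $L_{\varepsilon}^{\mathrm{s}}\to L_0^{\mathrm{s}}$ by continuity of $f_x$ and compactness, so $h_{\mathrm{s}}(\zeta,\Gamma,\Lambda)=\lim_{\varepsilon\to0}h_{\mathrm{s}}(2\varepsilon,\zeta,\Gamma,\Lambda)\leq(L_0^{\mathrm{s}}+\alpha)d$. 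I expect the main obstacle to be the bookkeeping in the key estimate: one must verify that the single radius $\delta(\tau)\asymp e^{-(\tilde L+\alpha)\tau}$ simultaneously keeps the perturbed trajectory inside $P_{\varepsilon+2\tilde\varepsilon}^{\mathrm{s}}$ (so that $\psi=\varphi$ and the bound really constrains the original system) and respects the time-dependent, exponentially shrinking tolerance $e^{-\alpha t}M(\cdots)$ uniformly on $[0,\tau]$, the worst case occurring at $t=\tau$.
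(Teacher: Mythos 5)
Your proof is correct, and its core mechanism is the same as the paper's: cut off $f$ outside a slightly enlarged version of $P^{\mathrm{s}}_{\varepsilon}$ to get a global Lipschitz constant $\tilde L$, apply Gronwall so that initial displacements of size $\asymp e^{-(\tilde L+\alpha)\tau}$ stay within the exponentially shrinking tolerance $\varepsilon e^{-\alpha t}$ up to time $\tau$, convert this into a bound by covering numbers, and finish with $\dim_F(\Gamma)\leq d$ and the convergence of the Lipschitz constants as the auxiliary parameters go to zero. Where you genuinely deviate is in the middle of the argument: the paper introduces an auxiliary minimal set $\mathcal{S}^{\ast}\subset R^{\mathrm{s}}_{\varepsilon}$ of pairs $(y_i,u_i)$ whose $\psi$-trajectories $\varepsilon e^{-\alpha\tau}$-shadow all trajectories from $\Gamma$, with cardinality $r^{\ast}(\tau,\varepsilon)$, and then proves the two-sided comparison $r_{\mathrm{s}}(\tau,3\varepsilon,\zeta,\Gamma,\Lambda)\leq r^{\ast}(\tau,\varepsilon)\leq c(\delta,\Gamma)$, the second inequality via a contradiction with minimality; you instead take an arbitrary minimal cover of $\Gamma$ by $\delta(\tau)$-balls, attach to each center a control supplied by assumption (\ref{ass3}), and verify in one displayed estimate that these controls form a genuine $(\tau,\varepsilon+2\tilde\varepsilon,\zeta,\Gamma,\Lambda)$-spanning set, so that $r_{\mathrm{s}}\leq c(\delta(\tau),\Gamma)$ drops out directly. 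Your route is shorter and logically more transparent (an explicit spanning set is exhibited, no minimality or contradiction needed), and your time-uniform choice $\delta(\tau)=\tilde\varepsilon\bigl(1+M^{-1}e^{(\tilde L+\alpha)\tau}\bigr)^{-1}$ makes the worst-case-at-$t=\tau$ bookkeeping explicit; what the paper's detour buys is structural parallelism with the proof of Theorem \ref{Theorem_upper}, whose shadowing-set machinery it reuses nearly verbatim. One caveat inherited rather than introduced: like the paper, you assert the convergence $\tilde L\to L^{\mathrm{s}}_{\varepsilon}$ (and $L^{\mathrm{s}}_{\varepsilon}\to L^{\mathrm{s}}_{0}$) without addressing the fact that $\tilde f_x=\theta_x f+\theta f_x$ involves the gradient of the cut-off on the shell where $\theta$ is nonconstant, so your proof is exactly as rigorous as the original on this point, no more and no less.
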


\begin{proof}
This proof follows similar steps as the proof of Theorem \ref{Theorem_upper}
but it is somewhat simpler. Define for $\varepsilon\geq0$%
\[
R_{\varepsilon}^{\mathrm{s}}:=\{(x_{0},u)\in\Gamma\times\mathcal{U}\left\vert
d(\varphi(t,x_{0},u),\Lambda)<e^{-\alpha t}M(d(x_{0},\Lambda)+\varepsilon
)\text{ for all }t\geq0\right.  \}.
\]
Fix $\varepsilon>0$ and choose a $C^{1}$-function $\theta:\mathbb{R}%
^{d}\rightarrow\lbrack0,1]$ with $\theta(x)=1$ for all $x\in P_{3\varepsilon
}^{\mathrm{s}}$ and support contained in $P_{4\varepsilon}^{\mathrm{s}}$. We
define $\tilde{f}:\mathbb{R}^{d}\times\mathbb{R}^{m}\rightarrow\mathbb{R}^{d}$
by $\tilde{f}(x,u):=\theta(x)f(x,u)$ and consider the control system
\[
\dot{x}(t)=\tilde{f}(x(t),u(t)),\ \ u(t)\in U.
\]
The solution map $\psi$ associated with this system satisfies for $\tau>0$,%
\begin{align*}
&  \left(  \psi([0,\tau],x_{0},u)\subset P_{3\varepsilon}^{\mathrm{s}}\text{
or }\varphi([0,\tau],x_{0},u)\subset P_{3\varepsilon}^{\mathrm{s}}\right) \\
&  \Rightarrow\ \psi(t,x_{0},u)=\varphi(t,x_{0},u)\mbox{\ \ for all\ }t\in
\lbrack0,\tau].
\end{align*}
Now let $\mathcal{S}^{\ast}=\{(y_{1},u_{1}),\ldots,(y_{n},u_{n})\}\subset
R_{\varepsilon}^{\mathrm{s}},n=r^{\ast}(\tau,\varepsilon)$, be a minimal
subset with the property that for every $x_{0}\in\Gamma$ there exists
$(y_{i},u_{i})\in\mathcal{S}^{\ast}$ with%
\[
\max_{t\in\lbrack0,\tau]}d(\psi(t,x_{0},u_{i}),\psi(t,y_{i},u_{i}%
))<\varepsilon e^{-\alpha\tau}\text{ for }t\in\lbrack0,\tau].
\]
Define%
\[
\Gamma_{i}:=\left\{  x_{0}\in\Gamma\ |\ \max_{t\in\lbrack0,\tau]}%
d(\psi(t,x_{0},u_{i}),\psi(t,y_{i},u_{i}))<\varepsilon e^{-\alpha\tau
}\right\}  ,\ \ i=1,\ldots,n=r^{\ast}(\tau,\tilde{\varepsilon}),
\]%
\[
L_{4\varepsilon}^{\mathrm{s}}:=\max\left\{  \left\Vert {}\right.  \tilde
{f}_{x}(x,u)\left.  {}\right\Vert \left\vert (x,u)\in P_{4\varepsilon}\times
U\right.  \right\}  =\max\left\{  \left\Vert {}\right.  \tilde{f}%
_{x}(x,u)\left.  {}\right\Vert \left\vert (x,u)\in\mathbb{R}^{d}\times
U\right.  \right\}  .
\]
Consider $x_{0}\in\mathbb{R}^{d}$ with $\Vert x_{0}-y_{i}\Vert<e^{-\left(
L_{4\varepsilon}^{s}+\alpha\right)  \tau}\varepsilon$ for some $i\in
\{1,\ldots,n\}$. Then Gronwall's Lemma implies instead of (\ref{Gron}) for
$t\in\lbrack0,\tau]$%
\[
\Vert\psi(t,x_{0},u_{i})-\psi(t,y_{i},u_{i})\Vert\leq\Vert x_{0}-y_{i}\Vert
e^{L_{4\varepsilon}^{\mathrm{s}}\tau}<e^{-\left(  L_{4\varepsilon}%
^{\mathrm{s}}+\alpha\right)  \tau}\varepsilon e^{L_{4\varepsilon}^{\mathrm{s}%
}\tau}=\varepsilon e^{-\alpha\tau}.
\]
It follows that $x_{0}\in\Gamma_{i}$ and thus $\Gamma$ contains the union of
the balls $\mathbf{B}(y_{i},e^{-\left(  L_{4\varepsilon}^{\mathrm{s}}%
+\alpha\right)  \tau}\varepsilon)$. Instead of (\ref{strong}) we obtain
$r_{\mathrm{s}}(\tau,3\varepsilon,\zeta,\Gamma,\Lambda)\leq r^{\ast}%
(\tau,\varepsilon)$, since for a minimal set $\mathcal{S}^{\ast}$ as above and
$x_{0}\in\Gamma$ there are $(y_{i},u_{i})\in\mathcal{S}^{\ast}$ such that for
all $t\in\lbrack0,\tau]$%
\begin{align*}
d(\psi(t,x_{0},u_{i}),\Lambda)  &  <d(\psi(t,x_{0},u_{i}),\psi(t,y_{i}%
,u_{i}))+d(\psi(t,y_{i},u_{i}),\Lambda)\\
&  <e^{-\alpha t}\varepsilon+e^{-\alpha t}M(d(y_{i},\Lambda)+\varepsilon)\\
&  \leq e^{-\alpha t}M(\left\Vert y_{i}-x_{0}\right\Vert +d(x_{0}%
,\Lambda)+2\varepsilon)\\
&  \leq e^{-\alpha t}M(d(x_{0},\Lambda)+3\varepsilon).
\end{align*}
Furthermore $\psi(t,x_{0},u_{i})\in P_{3\varepsilon}^{\mathrm{s}}$, hence
$\psi(t,x_{0},u_{i})=\varphi(t,x_{0},u_{i}),t\in\lbrack0,\tau]$, and the set
of controls $\{u_{1},\ldots,u_{n}\},\allowbreak n=r^{\ast}(\tau,\varepsilon)$,
is $\left(  \tau,3\varepsilon,\Gamma,\Lambda\right)  $-spanning. One finds
that%
\[
r_{\mathrm{s}}(\tau,3\varepsilon,\zeta,\Gamma,\Lambda)\leq r^{\ast}%
(\tau,\varepsilon)\leq c(\delta,\Gamma)\text{ with }\delta:=e^{-\left(
L_{4\varepsilon}^{\mathrm{s}}+\alpha\right)  \tau}\varepsilon,
\]
and
\[
\underset{\tau\rightarrow\infty}{\overline{\lim}}\frac{1}{\tau}\log r^{\ast
}(\tau,\varepsilon)\leq(L_{4\varepsilon}^{\mathrm{s}}+\alpha)\dim_{F}%
(\Gamma)\leq(L_{4\varepsilon}^{\mathrm{s}}+\alpha)d.
\]
For $\varepsilon\rightarrow0$ the Lipschitz constants $L_{4\varepsilon
}^{\mathrm{s}}$ converge to $L_{0}^{\mathrm{s}}$ and the assertion follows.
\end{proof}

\begin{remark}
Theorem \ref{Theorem_pract} generalizes and improves Colonius \cite[Theorem
3.3]{Colo12b}, where an upper bound for stabilization entropy about an
equilibrium is given using a global Lipschitz constant $L$.
\end{remark}

Next we prove lower bounds for the $\varepsilon$-practical stabilization
entropy based on a volume growth argument. For general $\mathcal{KL}%
$-functions the lower bound is given by the divergence $\mathrm{div}%
_{x}f(x,u)=\mathrm{tr}f_{x}(x,u)$ of $f$ with respect to $x$, while for
exponential $\mathcal{KL}$-functions a stronger result involving also the
exponential bound holds.

\begin{theorem}
\label{Theorem_lower}Consider for control system (\ref{2.1}) compact sets
$\Gamma,\Lambda\subset\mathbb{R}^{d}$, where $\Gamma$ has positive Lebesgue
measure and let $\zeta$ be a $\mathcal{KL}$-function. Suppose that $f$ is
continuous and $f$ is differentiable with respect to $x$ and the partial
derivative $f_{x}(x,u)$ is continuous in $(x,u)$ with $\inf_{(x,u)\in A\times
U}f_{x}(x,u)>-\infty$ for bounded sets $A\subset\mathbb{R}^{d}$.

(i) For $\varepsilon>0$ the $\varepsilon$-practical stabilization entropy and
the practical stabilization entropy satisfy%
\begin{align*}
\infty &  \geq h_{\mathrm{ps}}(\varepsilon,\zeta,\Gamma,\Lambda)\geq
\min_{(x,u)\in\overline{N(\Lambda;\varepsilon)}\times U}\mathrm{div}%
_{x}f(x,u),\\
\infty &  \geq h_{\mathrm{ps}}(\zeta,\Gamma,\Lambda)\geq\min_{(x,u)\in
\Lambda\times U}\mathrm{div}_{x}f(x,u).
\end{align*}

(ii) Let $\Lambda=\{0\}$ and $\zeta(r,s):=e^{-\alpha s}Mr,r,s\geq0$, with
constants $\alpha>0,M\geq1$. Then the $\varepsilon$-practical stabilization
entropy and the practical stabilization entropy satisfy%
\begin{align*}
\infty &  \geq h_{\mathrm{ps}}(\varepsilon,\zeta,\Gamma,\{0\})\geq\alpha
d+\min_{\left\Vert x\right\Vert \leq\varepsilon,u\in U}\mathrm{div}%
_{x}f(x,u),\\
\infty &  \geq h_{\mathrm{ps}}(\zeta,\Gamma,\{0\})\geq\alpha d+\min_{u\in
U}\mathrm{div}_{x}f(0,u).
\end{align*}

\end{theorem}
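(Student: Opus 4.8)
The plan is to run a Liouville volume-growth argument: a practically spanning set yields a cover of $\Gamma$, and I bound the Lebesgue measure of each piece from above by using that the flow compresses it towards $\Lambda$ while distorting volume at the rate fixed by the divergence. For a practically $(\tau,\varepsilon,\zeta,\Gamma,\Lambda)$-spanning set $\mathcal{S}$ I let, for each $u\in\mathcal{S}$, $\Gamma_u\subset\Gamma$ be the set of initial values for which the practical spanning condition \eqref{span2} holds with that $u$, so $\Gamma=\bigcup_{u\in\mathcal{S}}\Gamma_u$. Writing $R:=\max_{y\in\Gamma}d(y,\Lambda)<\infty$ (here compactness of $\Gamma$ enters), the spanning bound confines every trajectory $\varphi(\cdot,x_0,u)$, $x_0\in\Gamma_u$, to the compact sublevel set $P_\varepsilon=\{x:d(x,\Lambda)\le\zeta(R+\varepsilon,0)+\varepsilon\}$, and, since $\zeta(R+\varepsilon,t)\to0$, for each $\eta>0$ there is a $T_0=T_0(\eta)$, independent of $\tau$ and of $x_0$, with $\varphi(t,x_0,u)\in\overline{N(\Lambda;\varepsilon+\eta)}$ for $t\in[T_0,\tau]$. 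In particular $\varphi(\tau,\Gamma_u,u)\subset\overline{N(\Lambda;\varepsilon+\eta)}$, so its Lebesgue measure is bounded by a constant $V_\eta$ independent of $\tau$.

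The comparison of measures is through the variational equation, which gives $\det D_{x_0}\varphi(\tau,x_0,u)=\exp\bigl(\int_0^\tau\mathrm{div}_x f(\varphi(s,x_0,u),u(s))\,ds\bigr)$. Splitting the exponent at $T_0$ and using that on $[T_0,\tau]$ the trajectory lies in $\overline{N(\Lambda;\varepsilon+\eta)}$, the integrand there is at least $\beta_\eta:=\min_{\overline{N(\Lambda;\varepsilon+\eta)}\times U}\mathrm{div}_x f$, while on $[0,T_0]$ it is at least $\inf_{P_\varepsilon\times U}\mathrm{div}_x f$, so the $[0,T_0]$-contribution is at least $-C_0:=T_0\inf_{P_\varepsilon\times U}\mathrm{div}_x f$; both quantities are finite by the standing lower-bound hypothesis on $f_x$. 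Hence $\det D_{x_0}\varphi(\tau,x_0,u)\ge e^{-C_0}e^{(\tau-T_0)\beta_\eta}$, and integrating over $\Gamma_u$ yields $V_\eta\ge e^{-C_0}e^{(\tau-T_0)\beta_\eta}\,\mathrm{vol}(\Gamma_u)$. Summing over the cover, $0<\mathrm{vol}(\Gamma)\le\#\mathcal{S}\cdot V_\eta e^{C_0}e^{-(\tau-T_0)\beta_\eta}$, so $\tfrac1\tau\log r_{\mathrm{ps}}(\tau,\varepsilon,\zeta,\Gamma,\Lambda)\ge\beta_\eta+o(1)$ and thus $h_{\mathrm{ps}}(\varepsilon,\zeta,\Gamma,\Lambda)\ge\beta_\eta$. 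Letting $\eta\to0$, continuity of $\mathrm{div}_x f$ together with $\overline{N(\Lambda;\varepsilon+\eta)}\downarrow\overline{N(\Lambda;\varepsilon)}$ gives the first inequality of (i), and then $\varepsilon\to0$, with $\overline{N(\Lambda;\varepsilon)}\downarrow\Lambda$, gives the second.

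For (ii) the argument is identical except at the terminal time. With $\Lambda=\{0\}$ and $\zeta(r,s)=e^{-\alpha s}Mr$ the spanning bound forces $\varphi(\tau,\Gamma_u,u)$ into the ball of radius $e^{-\alpha\tau}M(R+\varepsilon)+\varepsilon$ about the origin, whose exponentially contracting part replaces the fixed bound $V_\eta$ by one of order $c_d\,e^{-\alpha d\tau}\bigl(M(R+\varepsilon)\bigr)^d$, with $c_d$ the volume of the unit ball. Carrying this through the same Liouville estimate multiplies the measure bound for $\Gamma_u$ by the extra factor $e^{-\alpha d\tau}$, so that summation over the cover now produces $\tfrac1\tau\log r_{\mathrm{ps}}\ge\alpha d+\beta_\eta'+o(1)$ with $\beta_\eta'=\min_{\|x\|\le\varepsilon+\eta,\,u\in U}\mathrm{div}_x f$; letting $\eta\to0$ and then $\varepsilon\to0$ gives the two inequalities of (ii).

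The confinement to $P_\varepsilon$ is immediate from the spanning inequality, and the continuity passages in $\eta$ and $\varepsilon$ are routine; I stress that only a lower bound on $\mathrm{div}_x f$ (equivalently on $f_x$) is used, which is exactly the hypothesis supplied. The delicate point is the terminal volume bound in (ii): the additional summand $\alpha d$ is harvested solely from the exponential contraction of the confinement ball, so one must ensure that the exponential factor $e^{-\alpha\tau}$, rather than the additive term, governs the size of $\varphi(\tau,\Gamma_u,u)$ as $\tau\to\infty$. This is precisely why the improvement over (i) is tied to exponential $\mathcal{KL}$-functions and why the additive part must be removed in the limiting entropy.
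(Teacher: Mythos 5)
Your part (i) is correct and takes essentially the same route as the paper: the identical Liouville volume-growth argument, with your pair $(\eta,T_{0}(\eta))$ playing exactly the role of the paper's pair $(\delta(\tau_{0}),\tau_{0})$ (you fix the neighborhood size and choose the time, the paper fixes the time $\tau_{0}$ and inherits the neighborhood $N(\Lambda;\delta(\tau_{0})+\varepsilon)$), followed by the same two limit passages.

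Part (ii), however, has a genuine gap, sitting exactly at what you yourself call ``the delicate point,'' and it cannot be closed. The practical spanning condition (\ref{span2}) confines $\varphi(\tau,\Gamma_{u},u)$ to the ball of radius $e^{-\alpha\tau}M(R+\varepsilon)+\varepsilon$, so the correct volume bound is $c_{d}\left(e^{-\alpha\tau}M(R+\varepsilon)+\varepsilon\right)^{d}$, which converges to $c_{d}\varepsilon^{d}>0$ as $\tau\rightarrow\infty$; it is \emph{not} of order $e^{-\alpha d\tau}$. For fixed $\varepsilon>0$ the additive slack dominates the exponentially contracting part for all large $\tau$, so the factor $e^{-\alpha d\tau}$ you want to harvest is simply absent, and your order claim silently discards the $+\varepsilon$ from the radius without justification. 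No argument can supply the missing factor, because the asserted inequality is false for fixed $\varepsilon$: take $\dot{x}=u$ in $\mathbb{R}$, $\Gamma=[-1,1]$, $U=[-\beta,\beta]$ with $\beta>\alpha$, so that $\mathrm{div}_{x}f\equiv0$. For grid points $y_{i}\in\Gamma$ with spacing $\varepsilon$ and controls $u_{i}(t)=-\beta y_{i}e^{-\beta t}$ one has $\varphi(t,x_{0},u_{i})=(x_{0}-y_{i})+y_{i}e^{-\beta t}$, hence for the nearest $y_{i}$ and all $t\geq0$, $\left\vert\varphi(t,x_{0},u_{i})\right\vert\leq\varepsilon/2+(\left\vert x_{0}\right\vert+\varepsilon/2)e^{-\beta t}\leq e^{-\alpha t}M(\left\vert x_{0}\right\vert+\varepsilon)+\varepsilon$. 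Thus $r_{\mathrm{ps}}(\tau,\varepsilon,\zeta,\Gamma,\{0\})\leq\lceil2/\varepsilon\rceil+1$ for every $\tau$, so $h_{\mathrm{ps}}(\varepsilon,\zeta,\Gamma,\{0\})=0$, strictly below the claimed bound $\alpha d=\alpha$; letting $\varepsilon\rightarrow0$ shows the second inequality of (ii) fails for this system as well.

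In fairness, you have reproduced a gap that is present in the paper's own proof of (ii): there it is asserted that $\lim_{\tau\rightarrow\infty}\frac{1}{\tau}\log\left[e^{-\alpha\tau}M(\kappa+\varepsilon)+\varepsilon\right]=-\alpha$, whereas the true limit is $0$, since the bracket tends to $\varepsilon>0$. The exponential gain $\alpha d$ is genuinely tied to spanning notions without additive slack: for the $\varepsilon$-stabilization entropy $h_{\mathrm{s}}$, defined via (\ref{span1}), the terminal radius is $e^{-\alpha\tau}M(\kappa+\varepsilon)$, your volume computation then goes through verbatim, and one recovers the lower bound $\alpha d+\min_{u\in U}\mathrm{div}_{x}f(0,u)$ of \cite[Theorem 3.2]{Colo12b}. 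For $h_{\mathrm{ps}}$ it does not, and any repair of part (ii) must either pass to $h_{\mathrm{s}}$ or modify the statement.
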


\begin{proof}
(i) If $h_{\mathrm{ps}}(\varepsilon,\zeta,\Gamma,\Lambda)=\infty$, the
inequalities in (i) are trivially satisfied. Hence we may assume that for
$\tau>0$ there is a finite practically $(\tau,\varepsilon,\zeta,\Gamma
,\Lambda)$-spanning set $\mathcal{S}=\{u_{1},\ldots,u_{n}\}$ of controls and
we pick $\mathcal{S}$ with minimal cardinality, hence $n=r_{\mathrm{ps}}%
(\tau,\varepsilon,\zeta,\Gamma,\Lambda)$. Define for $i=1,\ldots,n$%
\[
\Gamma_{i}:=\left\{  x_{0}\in\Gamma\left\vert d(\varphi(t,x_{0},u_{i}%
),\Lambda)<\zeta(d(x_{0},\Lambda)+\varepsilon,t)+\varepsilon\text{ for all
}t\in\lbrack0,\tau]\right.  \right\}  .
\]
Denote $\kappa:=\max_{x\in\Gamma}d(x,\Lambda)$ and $\delta(t):=\zeta
(\kappa+\varepsilon,t),t\in\lbrack0,\tau]$. Then for $i=1,\ldots,n$
\begin{equation}
\varphi(t,\Gamma_{i},u_{i})\subset N(\Lambda,\zeta(\kappa+\varepsilon
,t)+\varepsilon)=N(\Lambda;\delta(t)+\varepsilon),t\in\lbrack0,\tau],
\label{rev1}%
\end{equation}
implying for the Lebesgue measures%
\begin{equation}
\lambda(\varphi(t,\Gamma_{i},u_{i}))\leq\lambda(N(\Lambda;\delta
(t)+\varepsilon)). \label{4.3v}%
\end{equation}
On the other hand, by the transformation theorem for diffeomorphisms and
Liouville's trace formula (cf. Teschl \cite[Lemma 3.11]{Teschl}) we get for
$i=1,\ldots,n$
\begin{align}
\lambda(\varphi(\tau,\Gamma_{i},u_{i}))  &  =\int_{\Gamma_{i}}\left\vert
\det\frac{\partial\varphi}{\partial x_{0}}(\tau,x_{0},u_{i})\right\vert
dx_{0}\geq\lambda(\Gamma_{i})\cdot\inf_{(x_{0},u)}\left\vert \det
\frac{\partial\varphi}{\partial x_{0}}(\tau,x_{0},u)\right\vert \nonumber\\
&  =\lambda(\Gamma_{i})\cdot\inf_{(x_{0},u)}\exp\left(  \int_{0}^{\tau
}\mathrm{div}_{x}f(\varphi(s,x_{0},u),u(s))ds\right)  . \label{4.4v}%
\end{align}
Here, and in the rest of this proof, $\inf_{(x_{0},u)}$ denotes the infimum
over all $(x_{0},u)\in\Gamma\times\mathcal{U}$ with $\varphi(t,x_{0},u)\subset
N(\Lambda;\delta(t)+\varepsilon)$ for all $t\in\lbrack0,\tau]$. Fix $\tau
_{0}\in\lbrack0,\tau]$. Then $(x_{0},u)$ as above satisfies the estimate
\begin{align}
&  \int_{0}^{\tau}\mathrm{div}_{x}f(\varphi(s,x_{0},u),u(s))ds\nonumber\\
&  \geq\int_{0}^{\tau_{0}}\mathrm{div}_{x}f(\varphi(s,x_{0},u),u(s))ds+(\tau
-\tau_{0})\min_{(y,v)}\mathrm{div}_{x}f(y,v), \label{4.5vA}%
\end{align}
where the minimum is taken over all $(y,v)\in\mathbb{R}^{d}\times U$ with
$d(y,\Lambda)\leq\zeta(\kappa+\varepsilon,\tau_{0})+\varepsilon=\delta
(\tau_{0})+\varepsilon$. This holds since the function $\zeta$ is decreasing
in the second argument, and for all $s\in\lbrack\tau_{0},\tau]$%
\[
d(\varphi(s,x_{0},u),\Lambda)<\zeta(\max_{x_{0}\in\Gamma}d(x_{0}%
,\Lambda)+\varepsilon,s)+\varepsilon\leq\zeta(\kappa+\varepsilon,\tau
_{0})+\varepsilon.
\]
We may assume that $\lambda(\Gamma_{1})=\max_{i=1,\ldots,n}\lambda(\Gamma
_{i})$. Inequalities (\ref{4.4v}) and (\ref{4.3v}) imply
\begin{align*}
0  &  <\lambda(\Gamma)\leq\sum_{i=1}^{n}\lambda(\Gamma_{1})\leq n\cdot
\lambda(\Gamma_{1})\leq n\cdot\frac{\lambda(\varphi(\tau,\Gamma_{1},u_{1}%
))}{\inf_{(x_{0},u)}\exp\left(  \int_{0}^{\tau}\mathrm{div}_{x}f(\varphi
(s,x_{0},u),u(s))ds\right)  }\\
&  \leq n\cdot\frac{\lambda(N(\Lambda;\delta(\tau)+\varepsilon))}{\inf
_{(x_{0},u)}\exp\left(  \int_{0}^{\tau}\mathrm{div}_{x}f(\varphi
(s,x_{0},u),u(s))ds\right)  },
\end{align*}
hence%
\[
n=r_{\mathrm{ps}}(\tau,\varepsilon,\zeta,\Gamma,\Lambda)\geq\frac
{\lambda(\Gamma)}{\lambda(N(\Lambda;\delta(\tau)+\varepsilon))}\inf
_{(x_{0},u)}\exp\left(  \int_{0}^{\tau}\mathrm{div}_{x}f(\varphi
(s,x_{0},u),u(s))ds\right)  .
\]
Using (\ref{4.5vA}) and taking the logarithm on both sides one finds%
\begin{align*}
&  \log r_{\mathrm{ps}}(\tau,\varepsilon,\zeta,\Gamma,\Lambda)\geq\log
\lambda(\Gamma)-\log\lambda(N(\Lambda;\delta(\tau)+\varepsilon))+\\
&  \qquad+\inf_{(x_{0},u)}\int_{0}^{\tau_{0}}\mathrm{div}_{x}f(\varphi
(s,x_{0},u),u(s))ds+(\tau-\tau_{0})\min_{(y,v)}\mathrm{div}_{x}f(y,v).
\end{align*}
This yields the inequality
\begin{align}
&  \underset{\tau\rightarrow\infty}{\overline{\lim}}\frac{1}{\tau}\log
r_{\mathrm{ps}}(\tau,\varepsilon,\zeta,\Gamma,\Lambda)\label{rev3}\\
&  \geq\underset{\tau\rightarrow\infty}{\overline{\lim}}\left[  -\frac{1}%
{\tau}\log\lambda(N(\Lambda;\delta(\tau)+\varepsilon))+\frac{\tau-\tau_{0}%
}{\tau}\min_{(y,v)}\mathrm{div}_{x}f(y,v)\right]  .\nonumber
\end{align}
Since $\delta(\tau)\leq\delta(0)$ and $\lambda(N(\Lambda;\varepsilon))>0$, we
find%
\[
\underset{\tau\rightarrow\infty}{\overline{\lim}}-\frac{1}{\tau}\log
\lambda(N(\Lambda;\delta(\tau)+\varepsilon))=0.
\]
It follows that%
\[
\underset{\tau\rightarrow\infty}{\overline{\lim}}\frac{1}{\tau}\log
r_{\mathrm{ps}}(\tau,\varepsilon,\zeta,\Gamma,\Lambda)\geq\min_{(y,v)}%
\mathrm{div}_{x}f(y,v).
\]
Recall that the minimum is on the set $\left\{  (y,v)\in\mathbb{R}^{d}\times
U\left\vert d(y,\Lambda)\leq\delta(\tau_{0})+\varepsilon\right.  \right\}  $.
In the Hausdorff metric, these compact sets converge to $\overline
{N(\Lambda;\varepsilon)}\times U$ for $\tau_{0}\rightarrow\infty$. This proves
the first assertion in (i), the second follows by taking the limit for
$\varepsilon\rightarrow0$.\medskip

(ii) For $\varepsilon>0$ inequality (\ref{rev3}) holds. If we employ the
maximum-norm in $\mathbb{R}^{d}$, we obtain for the Lebesgue measure%
\[
\lambda(N(\{0\},\delta(\tau)+\varepsilon))=\lambda(\mathbf{B}(0,\delta
(\tau)+\varepsilon))\leq(2\delta(\tau)+2\varepsilon)^{d},
\]
and, by the choice of $\zeta$,%
\[
\lim_{\tau\rightarrow\infty}\frac{1}{\tau}\log(\delta(\tau)+\varepsilon
)=\lim_{\tau\rightarrow\infty}\frac{1}{\tau}\log\left[  e^{-\alpha\tau
}M(\kappa+\varepsilon)+\varepsilon\right]  =-\alpha.
\]
Hence (\ref{rev3}) implies
\begin{align*}
\underset{\tau\rightarrow\infty}{\overline{\lim}}\frac{1}{\tau}\log
r_{\mathrm{ps}}(\tau,\varepsilon,\zeta,\Gamma,\Lambda)  &  \geq\alpha
d+\underset{\tau\rightarrow\infty}{\overline{\lim}}\left(  \frac{\tau-\tau
_{0}}{\tau}\min_{(y,v)}\mathrm{div}_{x}f(y,v)\right) \\
&  =\alpha d+\min_{(y,v)}\mathrm{div}_{x}f(y,v)
\end{align*}
and the inequalities in assertion (ii) follow as in (i).
\end{proof}

\begin{remark}
Note that the lower bounds provided by Theorem \ref{Theorem_lower} may be
negative. In the linear case, this can be improved, cf. Theorem
\ref{Theorem6.2}.
\end{remark}

\begin{remark}
Theorem \ref{Theorem_lower}(ii) improves Colonius \cite[Theorem 3.2]{Colo12b},
where a similar lower bound for the stabilization entropy (which may be
greater than the practical stabilization entropy) is proved.
\end{remark}

\section{Relations to feedbacks\label{Section4}}

We will prove an upper bound of the $\varepsilon$-stabilization entropy under
the assumption that a feedback exists such that the system satisfies an
appropriate stability property. This is illustrated in the linear case.

Consider for system (\ref{2.1}) a Lipschitz continuous feedback $k:\mathbb{R}%
^{d}\rightarrow U$ such that the solutions $\psi(t,x_{0};k(\cdot)),t\geq0$, of%
\begin{equation}
x(0)=x_{0},\dot{x}(t)=f(x(t),k(x(t))), \label{F_system}%
\end{equation}
are well defined and depend continuously on the initial value. Fix
$\varepsilon>0$ and let $\zeta$ be a $\mathcal{KL}$-function. Define the
$\varepsilon$-entropy of $k(\cdot)$ in the following way. Let $\Gamma
\subset\mathbb{R}^{d}$ be compact and define for every $x_{0}\in\Gamma$ a
control by%
\begin{equation}
u_{x_{0}}(t)=k(\psi(t,x_{0};k(\cdot))),t\geq0. \label{4.2}%
\end{equation}
For $\tau>0$ a set $E=\{y_{1},\ldots,y_{n}\}\subset\Gamma$ is $(\tau
,\varepsilon,\zeta,\Gamma)$-spanning for $k(\cdot)$ if for all $x_{0}\in
\Gamma$ there is $j\in\{1,\ldots,n\}$ with%
\[
\left\Vert x_{0}-y_{j}\right\Vert <\varepsilon\text{ and }\left\Vert
\varphi(t,x_{0},u_{y_{j}})-\varphi(t,y_{j},u_{y_{j}})\right\Vert \leq
\zeta(\left\Vert x_{0}-y_{j}\right\Vert +\varepsilon,t)\text{ for }t\in
\lbrack0,\tau].
\]

\begin{definition}
\label{Definition_F_entropy}For system (\ref{2.1}), a set $\Gamma
\subset\mathbb{R}^{d}$ of initial states, and a $\mathcal{KL}$-function
$\zeta$ the $\varepsilon$-entropy of the feedback $k(\cdot)$ is%
\[
h_{\mathrm{fb}}(\varepsilon,\zeta,k(\cdot),\Gamma)=\underset{\tau
\rightarrow\infty}{\overline{\lim}}\frac{1}{\tau}\log\min\{\#E\left\vert
E\text{ is }(\tau,\varepsilon,\zeta,\Gamma)\text{-spanning for }%
k(\cdot)\right.  \}.
\]
If the feedback $k(\cdot)$ is independent of $\varepsilon$, we define the
entropy of the feedback $k(\cdot)$ as%
\[
h_{\mathrm{fb}}(\zeta,k(\cdot),\Gamma)=\lim_{\varepsilon\rightarrow
0}h_{\mathrm{fb}}(\varepsilon,\zeta,k(\cdot),\Gamma).
\]

\end{definition}

These notions of entropy are based on the concept, that only in the beginning,
at time $t=0$, an estimate of the initial point is used. The control is not
corrected at any later time.

The following proposition shows that the $\varepsilon$-stabilization entropy
can be bounded above by the $\varepsilon$-entropy of feedbacks for which the
system satisfies an $\varepsilon$-stability property.

\begin{proposition}
\label{Proposition_upper}Let $\Gamma,\Lambda\subset\mathbb{R}^{d}$ be compact
and $\varepsilon>0$. Suppose that there is\ a feedback $k_{\varepsilon}%
(\cdot)$ such that for every $x_{0}\in\Gamma$ the solution $\psi
(t,x_{0};k_{\varepsilon}(\cdot)),t\geq0$, of the feedback system
(\ref{F_system}) satisfies%
\begin{equation}
d(\psi(t,x_{0};k_{\varepsilon}(\cdot)),\Lambda)\leq\zeta(d(x_{0}%
,\Lambda)+\varepsilon,t)\text{ for }t\geq0. \label{eps0}%
\end{equation}
Then the $\varepsilon$-stabilization entropy is bounded above by the
$\varepsilon$-entropy of the feedback $k_{\varepsilon}(\cdot)$,%
\[
h_{\mathrm{ps}}(2\varepsilon,2\zeta,\Gamma,\Lambda)\leq h_{\mathrm{s}%
}(2\varepsilon,2\zeta,\Gamma,\Lambda)\leq h_{\mathrm{fb}}(\varepsilon
,\zeta,k_{\varepsilon}(\cdot),\Gamma).
\]

\end{proposition}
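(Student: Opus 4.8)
The goal is to show the chain of inequalities ending with $h_{\mathrm{fb}}(\varepsilon,\zeta,k_\varepsilon(\cdot),\Gamma)$ as an upper bound. The plan is to establish the two inequalities separately. The first one, $h_{\mathrm{ps}}(2\varepsilon,2\zeta,\Gamma,\Lambda)\leq h_{\mathrm{s}}(2\varepsilon,2\zeta,\Gamma,\Lambda)$, is immediate: every $(\tau,2\varepsilon,2\zeta,\Gamma,\Lambda)$-spanning set in the sense of (\ref{span1}) also satisfies the weaker practical spanning condition (\ref{span2}), since the right-hand side of (\ref{span2}) exceeds that of (\ref{span1}) by the nonnegative term $2\varepsilon$. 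Hence $r_{\mathrm{ps}}\leq r_{\mathrm{s}}$ for every $\tau$, and taking $\overline{\lim}_\tau \tfrac{1}{\tau}\log(\cdot)$ preserves the inequality. This is the routine part and mirrors the first inequality in (\ref{ineq1}).

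The substance is the second inequality. First I would take, for a given $\tau>0$, a minimal $(\tau,\varepsilon,\zeta,\Gamma)$-spanning set $E=\{y_1,\dots,y_n\}$ for the feedback $k_\varepsilon(\cdot)$, so that $n$ realizes the cardinality in Definition \ref{Definition_F_entropy}. To each $y_j$ I associate the open-loop control $u_{y_j}(t)=k_\varepsilon(\psi(t,y_j;k_\varepsilon(\cdot)))$ defined by (\ref{4.2}); note that by construction $\varphi(t,y_j,u_{y_j})=\psi(t,y_j;k_\varepsilon(\cdot))$, the feedback trajectory through $y_j$. The candidate spanning set of controls is $\mathcal{S}=\{u_{y_1},\dots,u_{y_n}\}$, and I would claim it is $(\tau,2\varepsilon,2\zeta,\Gamma,\Lambda)$-spanning in the sense of (\ref{span1}), which would give $r_{\mathrm{s}}(\tau,2\varepsilon,2\zeta,\Gamma,\Lambda)\leq n$ and hence the entropy bound after taking $\overline{\lim}_\tau \tfrac{1}{\tau}\log(\cdot)$.

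To verify the claim, fix $x_0\in\Gamma$ and pick $y_j\in E$ with $\|x_0-y_j\|<\varepsilon$ and the spanning bound on trajectory separation from Definition \ref{Definition_F_entropy}. The key estimate chains the triangle inequality for $d(\cdot,\Lambda)$ with (\ref{eps0}): for $t\in[0,\tau]$,
\begin{align*}
d(\varphi(t,x_0,u_{y_j}),\Lambda)
&\leq \|\varphi(t,x_0,u_{y_j})-\varphi(t,y_j,u_{y_j})\|+d(\varphi(t,y_j,u_{y_j}),\Lambda)\\
&\leq \zeta(\|x_0-y_j\|+\varepsilon,t)+\zeta(d(y_j,\Lambda)+\varepsilon,t).
\end{align*}
In the first summand I use $\|x_0-y_j\|<\varepsilon$ and monotonicity of $\zeta$ to bound by $\zeta(d(x_0,\Lambda)+2\varepsilon,t)$ after absorbing $\|x_0-y_j\|$ via $d(y_j,\Lambda)\leq\|x_0-y_j\|+d(x_0,\Lambda)$; in the second I similarly replace $d(y_j,\Lambda)$ by $\|x_0-y_j\|+d(x_0,\Lambda)<\varepsilon+d(x_0,\Lambda)$ and again bound by $\zeta(d(x_0,\Lambda)+2\varepsilon,t)$. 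The sum of the two terms is then at most $2\zeta(d(x_0,\Lambda)+2\varepsilon,t)=(2\zeta)(d(x_0,\Lambda)+2\varepsilon,t)$, which is exactly the spanning condition (\ref{span1}) for the $\mathcal{KL}$-function $2\zeta$ with parameter $2\varepsilon$. The main obstacle is purely bookkeeping: matching the two neighborhood parameters ($\varepsilon$ versus $2\varepsilon$) and the two $\mathcal{KL}$-functions ($\zeta$ versus $2\zeta$) so that both summands land under the same bound, which explains why the statement carries the doubled parameters $2\varepsilon$ and $2\zeta$ rather than $\varepsilon$ and $\zeta$. Once the spanning property holds for each fixed $\tau$, passing to $\overline{\lim}_\tau\tfrac1\tau\log(\cdot)$ on both sides completes the argument.
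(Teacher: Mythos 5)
Your proposal is correct and follows essentially the same route as the paper's proof: both construct the open-loop controls $u_{y_j}$ from the feedback via (\ref{4.2}), note that $\varphi(t,y_j,u_{y_j})=\psi(t,y_j;k_{\varepsilon}(\cdot))$, and combine the triangle inequality with (\ref{eps0}), the spanning property of $E$, and monotonicity of $\zeta$ to bound both summands by $\zeta(d(x_0,\Lambda)+2\varepsilon,t)$, yielding the $(\tau,2\varepsilon,2\zeta,\Gamma,\Lambda)$-spanning property. The only blemish is the stray remark about absorbing $\|x_0-y_j\|$ via $d(y_j,\Lambda)\leq\|x_0-y_j\|+d(x_0,\Lambda)$ in the \emph{first} summand, which is only needed for the second; the first summand is handled directly by $\zeta(\|x_0-y_j\|+\varepsilon,t)\leq\zeta(2\varepsilon,t)\leq\zeta(d(x_0,\Lambda)+2\varepsilon,t)$, exactly as you in fact do.
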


\begin{proof}
Note that the first inequality trivially holds. Fix $\tau>0$ and consider a
minimal $(\tau,\varepsilon,\zeta,\Gamma)$-spanning set $E=\{y_{1},\ldots
,y_{n}\}$ for $k_{\varepsilon}(\cdot)$. Using (\ref{4.2}) we associate to
every $y_{i}$ a control function $u_{y_{i}}$ and claim that the set of control
functions defined by%
\[
\mathcal{S}(E)=\{u_{y_{1}},\ldots,u_{y_{n}}\}
\]
is $(\tau,2\varepsilon,2\zeta,\Gamma,\Lambda)$-spanning. By assumption
(\ref{eps0}) for the feedback system we know%
\[
d(\varphi(t,y_{j},u_{y_{j}}),\Lambda)=d(\psi(t,y_{j};k_{\varepsilon}%
(\cdot)),\Lambda)\leq\zeta\left(  d(y_{j},\Lambda)+\varepsilon,t\right)
\text{ for all }t\geq0.
\]
By the spanning property of $E$, for all $x_{0}\in\Gamma$ there is $j$ such
that for all $t\in\lbrack0,\tau]$,
\begin{align}
d\left(  \varphi(t,x_{0},u_{y_{j}}),\Lambda\right)   &  \leq\left\Vert
\varphi(t,x_{0},u_{y_{j}})-\varphi(t,y_{j},u_{y_{j}})\right\Vert
+d(\varphi(t,y_{j},u_{y_{j}}),\Lambda)\nonumber\\
&  \leq\zeta(\left\Vert x_{0}-y_{j}\right\Vert +\varepsilon,t)+d(\psi
(t,y_{j};k_{\varepsilon}(\cdot)),\Lambda)\nonumber\\
&  \leq\zeta(2\varepsilon,t)+\zeta\left(  d(y_{j},\Lambda)+\varepsilon
,t\right) \label{eps1}\\
&  \leq\zeta(2\varepsilon,t)+\zeta\left(  \left\Vert y_{j}-x_{0}\right\Vert
+d(x_{0},\Lambda)+\varepsilon,t\right) \nonumber\\
&  \leq2\zeta(d(x_{0},\Lambda)+2\varepsilon,t).\nonumber
\end{align}
This shows the claim for $\mathcal{S}(E)$ and it follows that%
\begin{align*}
&  \min\{\#\mathcal{S}\left\vert \mathcal{S}\text{ is }(\tau,2\varepsilon
,2\zeta,\Gamma,\Lambda))\text{-spanning}\right.  \}\\
&  \leq\min\{\#E\left\vert E\text{ is }(\tau,\varepsilon,\zeta,\Gamma
)\text{-spanning for }k_{\varepsilon}(\cdot)\right.  \}.
\end{align*}
Taking logarithms and the limit for $\tau\rightarrow\infty$, one obtains the
assertion,%
\begin{align*}
&  h_{\mathrm{s}}(2\varepsilon,2\zeta,\Gamma,\Lambda)=\underset{\tau
\rightarrow\infty}{\overline{\lim}}\frac{1}{\tau}\log\min\{\#\mathcal{S}%
\left\vert \mathcal{S}\text{ is }(\tau,2\varepsilon,2\zeta,\Gamma
,\Lambda)\text{-spanning}\right.  \}\\
&  \leq\underset{\tau\rightarrow\infty}{\overline{\lim}}\frac{1}{\tau}\log
\min\{\#E\left\vert E\text{ is }(\tau,\varepsilon,\Gamma)\text{-spanning for
}k_{\varepsilon}(\cdot)\right.  \}=h_{\mathrm{fb}}(\varepsilon,\zeta
,k_{\varepsilon}(\cdot),\Gamma).
\end{align*}

\end{proof}

\begin{remark}
If one replaces (\ref{eps0}) by the weaker condition%
\[
d(\psi(t,x_{0};k_{\varepsilon}(\cdot)),\Lambda)\leq\zeta(d(x_{0}%
,\Lambda)+\varepsilon,t)+\varepsilon\text{ for }t\geq0,
\]
one can prove analogously a bound for the $\varepsilon$-practical
stabilization entropy,%
\[
h_{\mathrm{ps}}(2\varepsilon,2\zeta,\Gamma,\Lambda)\leq h_{\mathrm{fb}%
}(\varepsilon,\zeta,k_{\varepsilon}(\cdot),\Gamma).
\]

\end{remark}

Next we illustrate Proposition \ref{Proposition_upper} by considering linear
systems of the form%
\begin{equation}
\dot{x}(t)=Ax(t)+Bu(t),\ u\in\mathcal{U}, \label{linear}%
\end{equation}
with matrices $A\in\mathbb{R}^{d\times d}$, $B\in\mathbb{R}^{d\times m}$,
control range $U=\mathbb{R}^{m}$, and $\Lambda=\{0\}$. For a linear feedback
$K$ the feedback system has the form%
\[
\dot{x}(t)=(A+BK)x(t),~u\in\mathcal{U},
\]
with solutions $\psi(t,x_{0};K)=e^{(A+BK)t}x_{0}$. Suppose that $K$ is
stabilizing such that all eigenvalues $\lambda_{j}$ of $A+BK$ satisfy
$\operatorname{Re}\lambda_{j}<-\alpha$ for some $\alpha>0$. Hence the
solutions of the feedback system satisfy for every initial value $x_{0}%
\in\mathbb{R}^{d}$ and some constant $M\geq1$%
\begin{equation}
\left\Vert \psi(t,x_{0};K)\right\Vert \leq e^{-\alpha t}M\left\Vert
x_{0}\right\Vert =\zeta(\left\Vert x_{0}\right\Vert ,t), \label{eps3}%
\end{equation}
where $\zeta(r,s):=e^{-\alpha s}Mr,r,s\geq0$. Thus assumption (\ref{eps0}) in
Proposition \ref{Proposition_upper} holds, hence $h_{\mathrm{s}}%
(2\varepsilon,2\zeta,\Gamma,\{0\})\leq h_{\mathrm{fb}}(\varepsilon
,\zeta,K,\Gamma)$. For a compact subset $\Gamma\subset\mathbb{R}^{d}$ the
$\varepsilon$-entropy of $K$ is determined by the following. Let for $y_{0}%
\in\mathbb{R}^{d}$%
\[
u_{y_{0}}(t)=K\psi(t,y_{0};K)=Ke^{(A+BK)t}y_{0},t\geq0.
\]
Note that for $x_{0},y_{0}\in\mathbb{R}^{d}$%
\begin{align*}
&  \varphi(t,x_{0},u_{y_{0}})-\varphi(t,y_{0},u_{y_{0}})\\
&  =e^{At}x_{0}+\int_{0}^{t}e^{A(t-s)}Bu_{y_{0}}(s)ds-e^{At}y_{0}-\int_{0}%
^{t}e^{A(t-s)}Bu_{y_{0}}(s)ds\\
&  =e^{At}(x_{0}-y_{0}).
\end{align*}
For $\varepsilon,\tau>0$ a set $E=\{y_{1},\ldots,y_{n}\}$ is $(\tau
,\varepsilon,\zeta,\Gamma)$-spanning for the feedback $K$, if for all
$x_{0}\in\Gamma$ there exists $j$ such that $\left\Vert x_{0}-y_{j}\right\Vert
<\varepsilon$ and for $t\in\lbrack0,\tau]$%
\begin{equation}
\left\Vert \varphi(t,x_{0},u_{y_{j}})-\varphi(t,y_{j},u_{y_{j}})\right\Vert
=\left\Vert e^{At}(x_{0}-y_{j})\right\Vert \leq e^{-\alpha t}M(\left\Vert
x_{0}-y_{j}\right\Vert +\varepsilon). \label{4.eps}%
\end{equation}
A classical result shows that the topological entropy $h_{top}(\Phi_{t})$ of a
linear flow $\Phi_{t}=e^{At},t\in\mathbb{R}$, is given by $h_{\mathrm{top}%
}(\Phi_{t})=\sum_{i:\ \operatorname{Re}\lambda_{i}>0}\operatorname{Re}%
\lambda_{i}$, where $\lambda_{1},\ldots,\lambda_{n}$ denote the eigenvalues of
$A$, cf. Walters \cite[Theorem 8.14]{Walt82}. It follows that the topological
entropy of the flow $e^{(A+\alpha I)t},t\in\mathbb{R}$, is $\sum
_{i:\ \operatorname{Re}\lambda_{i}>-\alpha}\operatorname{Re}\lambda_{i}$. By
the definition of topological entropy of\ flows, for any compact set
$\Gamma\subset\mathbb{R}^{d}$ a set $F=\{z_{1},\ldots z_{k}\}$ is
$(\tau,\varepsilon,\Gamma)$-spanning if for every $x_{0}\in\Gamma$ there is
$z_{j}$ such that for $t\in\lbrack0,\tau]$
\[
\left\Vert e^{(A+\alpha I)t}(x_{0}-z_{j})\right\Vert <\varepsilon\text{, hence
}\left\Vert e^{At}(x_{0}-z_{j})\right\Vert <e^{-\alpha t}\varepsilon.
\]
This shows that $F$ is also $(\tau,\varepsilon,\zeta,\Gamma)$-spanning for the
feedback $K$, cf. (\ref{4.eps}). Using Proposition \ref{Proposition_upper} one
finds for $\tau\rightarrow\infty$ that the $\varepsilon$-stabilization entropy
and the $\varepsilon$-entropy of the feedback $K$ satisfy%
\[
h_{\mathrm{s}}(2\varepsilon,2\zeta,\Gamma,\{0\})\leq h_{\mathrm{fb}%
}(\varepsilon,\zeta,K,\Gamma)\leq h_{\mathrm{top}}\left(  e^{(A+\alpha
I)\cdot}\right)  =\sum_{i:\ \operatorname{Re}(\lambda_{i})>-\alpha
}\operatorname{Re}\lambda_{i}.
\]
Note that here $2\zeta(r,s)=e^{-\alpha s}2Mr,r,s\geq0$. Since the right hand
side is independent of $\varepsilon$, it actually follows for $\varepsilon
\rightarrow0$ that the practical stabilization entropy and the stabilization
entropy satisfies%
\begin{equation}
h_{\mathrm{ps}}(2\zeta,K,\Gamma,\{0\})\leq h_{\mathrm{s}}(2\zeta
,K,\Gamma,\{0\})\leq h_{\mathrm{fb}}(\zeta,K,\Gamma)\leq\sum
_{i:\ \operatorname{Re}(\lambda_{i})>-\alpha}\operatorname{Re}\lambda_{i}.
\label{lin_1}%
\end{equation}
In Theorem \ref{Theorem6.2} we will show that here equalities hold.

For general nonlinear systems, it seems very hard or impossible to derive
explicit formulas for the $\varepsilon$-entropy of a feedback.

\begin{remark}
The papers Liberzon and Hespanha \cite{LibeH05} and De Persis \cite{DePe06}
use Input-to-State (ISS) stability properties in order to derive stabilizing
encoder/decoder controllers. This condition (Assumption 2 in \cite{LibeH05})
requires (in our notation) that there exists a Lipschitz feedback law $u=k(x)$
which satisfies $k(0)=0$ and renders the closed-loop system input-to-state
stable with respect to measurement errors. This means that there exist $\mu
\in\mathcal{KL}$ and $\gamma\in\mathcal{K}_{\infty}$(i.e., $\gamma
:[0,\infty)\rightarrow\lbrack0,\infty)$ is continuous, strictly increasing,
and unbounded with $\gamma(0)=0$) such that for every initial state $x(t_{0})$
and every piecewise continuous signal $e$ the corresponding solution of the
system $\dot{x}=f(x,k(x+e))$ satisfies%
\[
\left\Vert x(t)\right\Vert \leq\mu(\left\Vert x(t_{0})\right\Vert
,t-t_{0})+\gamma(\sup_{s\in\lbrack t_{0},t]}\left\Vert e(s)\right\Vert )\text{
for all }t\geq t_{0}.
\]
The ISS property is used in order to estimate the effect of perturbations on
feedbacks. In Proposition \ref{Proposition_upper}, we have used instead the
entropy property of the feedback $k_{\varepsilon}(\cdot)$.
\end{remark}

\section{Applications\label{Section5}}

In this section we present several examples illustrating practical
stabilization properties and estimates for the corresponding entropies. For
linear control systems we show that the practical stabilization entropy and
the stabilization entropy coincide and they are characterized by a spectral
property. This uses inequality (\ref{lin_1}). Then two scalar examples are
discussed, where quadratic feedbacks and piecewise linear feedbacks, resp.,
only lead to practical stabilization properties. For these examples and a
similar higher dimensional system we estimate the $\varepsilon$-practical
stabilization entropy using the results from Section \ref{Section3}$.$

\subsection{Linear systems\label{Subsection5.1}}

In this subsection, the practical stabilization entropy is determined for
linear control systems in $\mathbb{R}^{d}$ of the form
\begin{equation}
\dot{x}(t)=Ax(t)+Bu(t),\ \ u(t)\in U, \label{6.1}%
\end{equation}
with matrices $A\in\mathbb{R}^{d\times d}$ and $B\in\mathbb{R}^{d\times m}$
and control range $U=\mathbb{R}^{m}$ containing the origin. The next theorem
characterizes the practical stabilization entropy about the equilibrium $x=0$
for linear control systems.

\begin{theorem}
\label{Theorem6.2}Consider a linear control system of the form (\ref{6.1})
with $0\in U$. Assume that there are $M,\alpha>0$ such that for all initial
values $0\not =x\in\mathbb{R}^{d}$ there is a control $u\in\mathcal{U}$ with%
\begin{equation}
\left\Vert \varphi(t,x,u)\right\Vert <e^{-\alpha t}M/2\left\Vert x\right\Vert
\text{ for all }t\geq0. \label{6.2}%
\end{equation}
Then for the exponential $\mathcal{KL}$-function $\zeta(r,s)=e^{-\alpha t}Mr$
the $\varepsilon$-practical stabilization entropy, the practical stabilization
entropy, and the stabilization entropy satisfy for every compact subset
$\Gamma$ with nonvoid interior
\[
h_{\mathrm{ps}}(\varepsilon,\zeta,\Gamma,\{0\})=h_{\mathrm{ps}}(\zeta
,\Gamma,\{0\})=h_{\mathrm{s}}(\zeta,\Gamma,\{0\})=\sum_{\operatorname{Re}%
\lambda_{i}>-\alpha}(\alpha+\operatorname{Re}\lambda_{i}).
\]
Here summation is over all eigenvalues $\lambda_{i}$ of $A$, counted according
to their algebraic multiplicity, with $\operatorname{Re}\lambda_{i}>-\alpha$.
\end{theorem}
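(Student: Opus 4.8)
The plan is to pin all three entropies to the single number $S:=\sum_{\operatorname{Re}\lambda_i>-\alpha}(\alpha+\operatorname{Re}\lambda_i)$ by a sandwich argument. By the monotonicity chain (\ref{ineq1}) it is enough to establish the two one-sided bounds $h_{\mathrm{s}}(\zeta,\Gamma,\{0\})\le S$ and $h_{\mathrm{ps}}(\varepsilon,\zeta,\Gamma,\{0\})\ge S$ for every $\varepsilon>0$, since these force $S\le h_{\mathrm{ps}}(\varepsilon,\zeta,\Gamma,\{0\})\le h_{\mathrm{ps}}(\zeta,\Gamma,\{0\})\le h_{\mathrm{s}}(\zeta,\Gamma,\{0\})\le S$. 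The organizing remark behind both estimates is that $S$ is exactly the topological entropy of the shifted linear flow $e^{(A+\alpha I)t}$: the eigenvalues of $A+\alpha I$ are $\lambda_i+\alpha$, so the classical formula (Walters \cite[Theorem 8.14]{Walt82}) gives $h_{\mathrm{top}}(e^{(A+\alpha I)\cdot})=\sum_{\operatorname{Re}(\lambda_i+\alpha)>0}\operatorname{Re}(\lambda_i+\alpha)=S$. Conceptually this is because the time-rescaling $z(t)=e^{\alpha t}\varphi(t,x_0,u)$ turns the rate-$\alpha$ stabilization requirement into keeping $z$ in a fixed ball for the system $\dot z=(A+\alpha I)z+Bv$ (here $U=\mathbb{R}^m$ lets $v$ range freely), i.e.\ into an invariance problem for the shifted matrix.

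For the upper bound I would feed assumption (\ref{6.2}) into the feedback analysis of Section \ref{Section4}. For linear systems the existence, for each $x$, of an open-loop control with $\|\varphi(t,x,u)\|<e^{-\alpha t}(M/2)\|x\|$ is equivalent to exponential stabilizability with decay rate $\alpha$; hence the modes of $(A,B)$ that the input cannot influence already lie in $\{\operatorname{Re}<-\alpha\}$, and pole placement yields a linear feedback $K$ with $\operatorname{spec}(A+BK)\subset\{\operatorname{Re}<-\alpha\}$. Inserting $K$ into the computation that culminates in (\ref{lin_1}) and using the topological-entropy identity above gives $h_{\mathrm{s}}(\zeta,\Gamma,\{0\})\le h_{\mathrm{fb}}(\zeta,K,\Gamma)\le h_{\mathrm{top}}(e^{(A+\alpha I)\cdot})=S$. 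The role of the factor $M/2$ in (\ref{6.2}), of the factor $2$ lost in Proposition \ref{Proposition_upper}, and of the monotonicity of the entropy in the overshoot constant $M$ is precisely to make this bound valid for the given $\zeta$ with constant $M$; verifying this constant bookkeeping is the first place where care is needed.

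For the lower bound I would run a volume-growth argument as in Theorem \ref{Theorem_lower}, but sharpened to the $A$-invariant subspace $V_+:=\bigoplus_{\operatorname{Re}\lambda_i>-\alpha}$ of generalized eigenspaces, of dimension $d_+$. Applying Theorem \ref{Theorem_lower}(ii) verbatim only yields $h_{\mathrm{ps}}(\varepsilon,\zeta,\Gamma,\{0\})\ge\alpha d+\operatorname{tr}A=\sum_i(\alpha+\operatorname{Re}\lambda_i)$, which may fall strictly below $S$ once modes with $\operatorname{Re}\lambda_i<-\alpha$ are present, since those contribute spurious negative terms. To recover $S$ I would project each set $\Gamma_i$ of initial states served by a single control onto $V_+$ (writing $\pi_+$ for the projection), note that $\pi_+\Gamma$ has positive $d_+$-dimensional measure because $\Gamma$ has nonvoid interior, and run the Liouville/transformation estimate inside $V_+$: the flow expands $d_+$-volume there by $\det e^{A|_{V_+}\tau}=e^{\tau\sum_{\operatorname{Re}\lambda_i>-\alpha}\operatorname{Re}\lambda_i}$, while the admissible tube confines the images; balancing the two forces the number of controls to grow like $e^{\tau S}$, provided the tube supplies the extra factor $\alpha d_+$.

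Combining the two bounds collapses the chain and gives the asserted equalities. I expect the decisive obstacle to be exactly that last proviso in the lower bound. For the honestly practical entropy the tube $N(\{0\};e^{-\alpha t}M(\kappa+\varepsilon)+\varepsilon)$, with $\kappa=\max_{x\in\Gamma}\|x\|$, does not contract to a point but only down to radius $\varepsilon$, so harvesting the full contraction $\alpha d_+$ — rather than merely the expansion rate $\sum_{\operatorname{Re}\lambda_i>-\alpha}\operatorname{Re}\lambda_i$ that the final-time constraint alone delivers — is the delicate step: one must exploit the tube bound at all intermediate times and reconcile it with the $e^{-\alpha\tau}$ factor that, after the rescaling $z=e^{\alpha t}x$, is transparent for the non-practical entropy $h_{\mathrm{s}}$. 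The constant-matching in the upper bound is a second, more routine, point to be checked.
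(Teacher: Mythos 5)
Your plan is, in structure, the paper's own proof: the upper bound via pole placement, Proposition \ref{Proposition_upper} and (\ref{lin_1}), identified with the topological entropy of $e^{(A+\alpha I)t}$ (your value $S=\sum_{\operatorname{Re}\lambda_i>-\alpha}(\alpha+\operatorname{Re}\lambda_i)$ is the correct one; the paper's displayed sum $\sum_{\operatorname{Re}\lambda_i>-\alpha}\operatorname{Re}\lambda_i$ in Section \ref{Section4} and (\ref{6.3}) drops the $+\alpha$ terms and must be read as $S$ for the theorem to close), and the lower bound via the spectral projection onto $V_+$ followed by the volume-growth estimate of Theorem \ref{Theorem_lower}(ii) for the projected system, using that spanning sets project to spanning sets. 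So there is no alternative route to report; the constant bookkeeping you mention in the upper bound is glossed in the paper as well ("one easily proves").

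However, the step you flag as the decisive obstacle is a genuine gap, and it is not closed by the paper either: the paper harvests the $\alpha d_+$ term by citing Theorem \ref{Theorem_lower}(ii), whose proof rests on the computation $\lim_{\tau\rightarrow\infty}\frac{1}{\tau}\log\left[e^{-\alpha\tau}M(\kappa+\varepsilon)+\varepsilon\right]=-\alpha$. For fixed $\varepsilon>0$ this limit is $0$, not $-\alpha$, because the additive $\varepsilon$-floor of the practical tube never contracts; the volume argument then only yields $\min\mathrm{div}_{x}f$, i.e. $\mathrm{tr}(A|_{V_+})$ after projection, without the $\alpha d_+$. Worse, the gap cannot be repaired, because the practical part of the statement fails: take $\dot{x}=u$ in $\mathbb{R}$ ($A=0$, $B=1$, $U=\mathbb{R}$), which satisfies (\ref{6.2}) with $M=4$ for every $\alpha>0$ (use $\varphi(t,x,u)=xe^{-2\alpha t}$), so $S=\alpha$. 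Given $\varepsilon>0$, cover $\Gamma$ by $N(\varepsilon)$ intervals $[a_j-\varepsilon,a_j+\varepsilon]$ and take $u_j(t)=-2\alpha a_je^{-2\alpha t}$, so that $\varphi(t,x_0,u_j)=(x_0-a_j)+a_je^{-2\alpha t}$ and, using $|a_j|\leq|x_0|+\varepsilon$ and $e^{-2\alpha t}\leq 4e^{-\alpha t}$, one gets $|\varphi(t,x_0,u_j)|\leq\varepsilon+4e^{-\alpha t}(|x_0|+\varepsilon)=\zeta(|x_0|+\varepsilon,t)+\varepsilon$ for all $t\geq0$. Thus $r_{\mathrm{ps}}(\tau,\varepsilon,\zeta,\Gamma,\{0\})\leq N(\varepsilon)$ for every $\tau$, giving $h_{\mathrm{ps}}(\varepsilon,\zeta,\Gamma,\{0\})=0$ and, letting $\varepsilon\rightarrow0$, $h_{\mathrm{ps}}(\zeta,\Gamma,\{0\})=0<\alpha=S$; since the $u_j$ take values in the fixed compact set $[-2\alpha\kappa,2\alpha\kappa]$, the variant of Remark \ref{Remark_unboundedU} changes nothing. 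By contrast, for the stabilization entropy the tube $e^{-\alpha t}M(d(x_0,\{0\})+\varepsilon)$ has no floor, the limit above is honestly $-\alpha$, and your projected volume argument combined with the feedback upper bound validly gives $h_{\mathrm{s}}(\zeta,\Gamma,\{0\})=S$. So your proposal (like the paper's proof) is sound exactly for $h_{\mathrm{s}}$, and the asserted equality with the $\varepsilon$-practical and practical stabilization entropies is precisely where the "delicate step" cannot be carried out.
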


\begin{proof}
One easily proves that assumption (\ref{6.2}) holds if and only if there is a
feedback $K$ such that all eigenvalues of $A+BK$ satisfy $\operatorname{Re}%
\lambda_{j}<-\alpha$. Thus condition (\ref{eps3}) holds and it follows from
(\ref{lin_1}) that%
\begin{equation}
h_{\mathrm{ps}}(\zeta,K,\Gamma,\{0\})\leq h_{\mathrm{s}}(\zeta,K,\Gamma
,\{0\})\leq h_{\mathrm{fb}}(\zeta/2,K,\Gamma)\leq\sum
\nolimits_{i:\ \operatorname{Re}(\lambda_{i})>-\alpha}\operatorname{Re}%
\lambda_{i}. \label{6.3}%
\end{equation}
For the proof of the converse inequalities note that $f(x,u)=Ax+Bu$ satisfies
\[
\mathrm{div}_{x}f(x,u)=\mathrm{tr}f_{x}(x,u)=\mathrm{tr}A=\sum_{i=1}%
^{d}\lambda_{i}=\sum_{i=1}^{d}\operatorname{Re}\lambda_{i}.
\]
Theorem \ref{Theorem_lower}(ii) can be applied to the system obtained by the
projection\ $\pi$ to the sum of the real generalized eigenspaces for all
eigenvalues with real part larger than $-\alpha$ along the subspace
corresponding to the sum of the other generalized eigenspaces. Then the
$\varepsilon$-practical stabilization entropy of this projected system is
bounded below by
\[
\alpha\dim(\pi(\mathbb{R}^{d}))+\sum_{\operatorname{Re}\lambda_{i}>-\alpha
}\operatorname{Re}\lambda_{i}=\sum_{\operatorname{Re}\lambda_{i}>-\alpha
}(\alpha+\operatorname{Re}\lambda_{i}).
\]
The equality follows, since the eigenvalues are counted according to their
algebraic multiplicity. Since practically $(\tau,\varepsilon,\zeta
,\Gamma,\{0\})$-spanning sets for the system in $\mathbb{R}^{d}$ yield
practically $(\tau,\varepsilon,\zeta,\pi(\Gamma),\{0\})$-spanning sets for the
projected system and $\pi(\Gamma)$ has nonvoid interior, it follows that
$h_{\mathrm{ps}}(\varepsilon,\zeta,\Gamma,\{0\})\geq\sum_{i:\operatorname{Re}%
\lambda_{i}>-\alpha}(\alpha+\operatorname{Re}\lambda_{i})$. Together with
(\ref{6.3}) the assertion follows.
\end{proof}

\begin{remark}
The characterization of stabilization entropy in Theorem \ref{Theorem6.2} has
already been proved in Colonius \cite[Lemma 4.1 and Theorem 4.2]{Colo12b}. The
proof above is a considerable simplification.
\end{remark}

\subsection{A scalar example with quadratic feedback\label{Subsection5.2}}

In this subsection we discuss a scalar example, where only practical
stabilization properties can be used. Our strategy is to construct quadratic
feedbacks such that the closed loop systems has, in addition to the unstable
equilibrium at the origin, a stable equilibrium arbitrarily close to the
origin. Thus for every $\varepsilon>0$ the feedback systems are $\varepsilon
$-practically stable in the sense of Definition \ref{Definition2.1}, and we
use the estimates for entropy from Section \ref{Section3}$.$

Consider the scalar control system given by%
\begin{equation}
\dot{x}=f(x,u)=\lambda x+\alpha_{0}x^{2}+\beta_{0}xu+\gamma_{0}u^{2},
\label{open}%
\end{equation}
where $\lambda>0$ and $\alpha_{0},\beta_{0},\gamma_{0}$ with $\gamma_{0}%
\not =0$ are real parameters and the controls take values $u(t)\in
U\subset\mathbb{R}$.

For system (\ref{open}) the origin $x=0$ is an equilibrium corresponding to
$u=0$ if $0\in U$. For $x=0$ the right hand side of (\ref{open}) is given by
$f(0,u)=\gamma_{0}u^{2}$. For $\gamma_{0}>0$ one has $f(0,u)>0$ for all
$0\not =u\in\mathbb{R}$ and for $\gamma_{0}<0$ one has $f(0,u)<0$ for all
$0\not =u\in\mathbb{R}$. Hence the system is not controllable around the
origin. By Brockett's necessary condition (cf. Sontag \cite[Theorem
22]{Sont98}) it is not locally $C^{1}$ stabilizable. Hence for $\gamma_{0}>0$
stabilization can only be expected for initial values in $(-\infty,0)$ and for
$\gamma_{0}<0$ for initial values in $(0,\infty)$. \medskip

For quadratic feedbacks of the form
\begin{equation}
k_{\mathrm{quad}}(x)=kx+xqx\text{ with }k,q\in\mathbb{R}, \label{quadratic}%
\end{equation}
the closed loop system is%
\begin{align}
\dot{x}  &  =\lambda x+\alpha_{0}x^{2}+\beta_{0}x(kx+qx^{2})+\gamma
_{0}(kx+qx^{2})^{2}\nonumber\\
&  =\lambda x+(\alpha_{0}+\beta_{0}k+\gamma_{0}k^{2})x^{2}+q(\beta_{0}%
+2\gamma_{0}k)x^{3}+\gamma_{0}q^{2}x^{4}. \label{closed_loop1}%
\end{align}
We denote the solutions of this equation by $\psi(t,x_{0};k,q)$ on their
existence intervals. The following theorem shows that with quadratic feedback
(\ref{quadratic}) system (\ref{open}) can be made $\varepsilon$-practically
stable with exponential rate $\alpha\in(0,3\lambda)$, where the constant $M$
in $\zeta(r,s)=e^{-\alpha s}Mr$ depends on $\varepsilon$ (and $\Gamma$), cf.
Definition \ref{Definition2.1}.

\begin{theorem}
\label{Theorem_equilibria1}Consider system (\ref{open}) with quadratic
feedback (\ref{quadratic}) and $\Lambda=\{0\}$. Fix $\varepsilon>0$ and let
the control range be either $U_{\varepsilon}^{+}=[0,\rho(\varepsilon)]$ or
$U_{\varepsilon}^{-}=[-\rho(\varepsilon),0]$ with $\rho(\varepsilon)$ large
enough. If $\gamma_{0}<0$ consider initial values in a compact set
$\Gamma=\Gamma^{+}\subset(0,\infty)$, if $\gamma_{0}>0$ consider a compact set
$\Gamma=\Gamma^{-}\subset(-\infty,0)$.

Then for every $\alpha\in(0,3\lambda)$ there are $k,q\in\mathbb{R}$ such that
for $\zeta_{\varepsilon}(r,s)=e^{-\alpha s}M(\varepsilon)r,\allowbreak
r,s\geq0$, with $M(\varepsilon)\geq1$, the closed loop system
(\ref{closed_loop1}) is $\varepsilon$-practically $(\zeta_{\varepsilon}%
,\Gamma,\{0\})$-stable.
\end{theorem}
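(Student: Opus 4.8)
The plan is to exploit that the closed loop (\ref{closed_loop1}) is a scalar autonomous equation $\dot{x}=g(x)$ with $g$ a quartic polynomial, so that every trajectory is monotone and converges to an equilibrium; I will design $k,q$ so that, besides the repelling origin, $g$ has an attracting equilibrium $x_{\ast}$ lying on the same side of $0$ as $\Gamma$, with $0<|x_{\ast}|<\varepsilon$ and decay rate exceeding $\alpha$. Concretely, since $\gamma_{0}\neq0$ I first choose $k=-\beta_{0}/(2\gamma_{0})$, which makes the cubic coefficient $q(\beta_{0}+2\gamma_{0}k)$ vanish and reduces (\ref{closed_loop1}) to $\dot{x}=\lambda x+ax^{2}+\gamma_{0}q^{2}x^{4}$ with the fixed constant $a=\alpha_{0}-\beta_{0}^{2}/(4\gamma_{0})$; then I treat $C:=|\gamma_{0}|q^{2}$ as a large free parameter. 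For $\gamma_{0}<0$ (so the leading coefficient $\gamma_{0}q^{2}=-C$ is negative and $\Gamma=\Gamma^{+}\subset(0,\infty)$) this is a perturbation of the normal form $\dot{x}=\lambda x-Cx^{4}$.

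The threshold $3\lambda$ comes from this normal form: $\dot{x}=\lambda x-Cx^{4}$ has the attracting equilibrium $x_{\ast}=(\lambda/C)^{1/3}$, and $g'(x_{\ast})=\lambda-4Cx_{\ast}^{3}=\lambda-4\lambda=-3\lambda$ \emph{independently of} $C$, while $x_{\ast}=(\lambda/C)^{1/3}\to0$ as $C\to\infty$. Keeping the harmless term $ax^{2}$, the equilibrium near $0$ still satisfies $Cx_{\ast}^{3}=\lambda+ax_{\ast}\to\lambda$ and $g'(x_{\ast})=\lambda+2ax_{\ast}-4Cx_{\ast}^{3}\to-3\lambda$ as $C\to\infty$. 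Hence, given $\alpha\in(0,3\lambda)$, I fix $C$ (equivalently $|q|$) so large that simultaneously $x_{\ast}<\varepsilon$, $x_{\ast}<\min\Gamma$, and $g'(x_{\ast})<-\alpha$; the sign of $q$ is chosen so that the feedback values match the prescribed one-sided range, and $\rho(\varepsilon)$ is taken to be the (finite) maximum of $|k_{\mathrm{quad}}|$ over the relevant compact interval.

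To get the $\mathcal{KL}$-bound I pass from the local rate to a global comparison. Put $\phi(x):=g(x)+\alpha(x-x_{\ast})$; then $\phi(x_{\ast})=0$, and $\phi'(x)=\lambda+\alpha+2ax-4Cx^{3}$ satisfies $\phi'(x_{\ast})=\alpha-3\lambda+o(1)<0$ and, since the cubic term dominates for $x\geq x_{\ast}$ once $C$ is large (substitute $x=x_{\ast}s$, $s\geq1$), $\phi'<0$ on all of $[x_{\ast},\infty)$. Thus $g(x)\leq-\alpha(x-x_{\ast})$ on $[x_{\ast},\max\Gamma]$, and because each $x_{0}\in\Gamma$ gives a trajectory that decreases monotonically from $x_{0}$ to $x_{\ast}$ inside this interval, the scalar comparison principle yields $\psi(t,x_{0};k,q)-x_{\ast}\leq e^{-\alpha t}(x_{0}-x_{\ast})$, whence
\[
|\psi(t,x_{0};k,q)|\leq e^{-\alpha t}x_{0}+x_{\ast}\leq e^{-\alpha t}M(\varepsilon)\,|x_{0}|+\varepsilon
\]
for all $t\geq0$ with $M(\varepsilon)=1$. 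This is exactly $\varepsilon$-practical $(\zeta_{\varepsilon},\Gamma,\{0\})$-stability in the sense of Definition \ref{Definition2.1}. The case $\gamma_{0}>0$, $\Gamma=\Gamma^{-}\subset(-\infty,0)$ is symmetric: the leading coefficient is now $+C$, the attracting equilibrium $x_{\ast}<0$ sits just below the origin with the same limiting rate $-3\lambda$, and trajectories converge to it monotonically from $\Gamma^{-}$.

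The hard part will be the interplay of the three requirements under only two feedback parameters: I must make $|x_{\ast}|$ small (to absorb it into $\varepsilon$), make the decay rate close to $3\lambda$, \emph{and} keep the linear comparison $g\leq-\alpha(\cdot-x_{\ast})$ valid across the whole of $\Gamma$, not just near $x_{\ast}$. The point that makes all three compatible is the scale invariance of the rate in the quartic normal form (the rate stays $-3\lambda$ as $x_{\ast}\to0$), which is also the precise reason the admissible range of exponents is capped at $3\lambda$; the remaining care is the bookkeeping showing that the quadratic feedback stays in a one-sided compact control range along the trajectories so that $\rho(\varepsilon)$ can be chosen finite.
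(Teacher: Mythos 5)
Your proposal is correct, and it splits naturally into two halves relative to the paper. The first half is essentially identical to the paper's proof: you make the same choice $k=-\beta_{0}/(2\gamma_{0})$ to kill the cubic term, obtain the same reduced constant $a=\alpha_{0}-\beta_{0}^{2}/(4\gamma_{0})$, and extract the same two facts as $\left\vert q\right\vert \rightarrow\infty$, namely $x_{\ast}\rightarrow0$ and linearization $\rightarrow-3\lambda$ (the paper gets these from Cardano's formula for the depressed cubic, you from the scaling $Cx_{\ast}^{3}=\lambda+ax_{\ast}$ — a cosmetic difference). The second half is genuinely different. The paper argues locally: it establishes local exponential stability of $e(q)$ from the Jacobian, invokes general properties of scalar autonomous equations for attraction, and then patches the transient using compactness of $\Gamma$ — every trajectory reaches a point $z$ in the domain of exponential attraction within a uniformly bounded time $T$, and the constant $M(\varepsilon)=e^{3\lambda T}$ absorbs the transient. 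You instead prove the \emph{global} differential inequality $g(x)\leq-\alpha(x-x_{\ast})$ on $[x_{\ast},\infty)$ by showing $\phi^{\prime}<0$ there (your substitution $x=x_{\ast}s$ gives $\phi^{\prime}(x_{\ast}s)\leq s^{3}\left(  \alpha-3\lambda+c\left\vert a\right\vert x_{\ast}\right)  <0$ for $C$ large, which checks out), and then apply Gronwall/comparison directly. This buys you two things the paper does not get: the conclusion holds with $M(\varepsilon)=1$ — so the degradation as $\varepsilon\rightarrow0$ sits entirely in the additive term, which is sharper than the paper's $M$ depending on $\varepsilon$ through the transit time (cf. Remark \ref{Remark_unboundedU}) — and the argument simultaneously shows there are no further equilibria above $x_{\ast}$, making the proof self-contained. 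What the paper's local-plus-compactness route buys in exchange is genericity: the identical patching argument is reused verbatim for the cubic example in Theorem \ref{Theorem_equilibria}, whereas your domination argument exploits the specific sign structure of the quartic right-hand side and would have to be redone there. Your bookkeeping on the control range (sign of $q$ forces one-sided feedback values on the compact interval swept by the trajectories, with $\rho(\varepsilon)$ their finite maximum) matches the paper's treatment.
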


The proof of Theorem \ref{Theorem_equilibria1} is given in the appendix.

Next we estimate the $\varepsilon$-practical stabilization entropy. The
control ranges will vary, hence we add this argument in the notation for the
entropy. In Theorem \ref{Theorem5.3}(ii) we employ the modified notion for
non-compact control ranges in Remark \ref{Remark_unboundedU}.

\begin{theorem}
\label{Theorem5.3}Consider system (\ref{open}) with quadratic feedback
(\ref{quadratic}) and let the assumptions of Theorem \ref{Theorem_equilibria1}
be satisfied.

(i) For $\varepsilon>0$ let the exponential $\mathcal{KL}$-function
$\zeta_{\varepsilon}$ be given by Theorem \ref{Theorem_equilibria1}. Then the
$\varepsilon$-practical stabilization entropy satisfies%
\[
h_{\mathrm{ps}}(2\varepsilon,\zeta_{\varepsilon},\Gamma,\{0\},U_{\varepsilon
}^{\pm})\leq\max\left\{  \left\vert f_{x}(x,u)\right\vert \left\vert (x,u)\in
P_{\varepsilon}\times U_{\varepsilon}^{\pm}\right.  \right\}  <\infty,
\]
where $\Gamma=\Gamma^{+}$ for $\gamma_{0}<0$ and $\Gamma=\Gamma^{-}$ for
$\gamma_{0}>0$, and%
\[
f_{x}(x,u)=\lambda+2\alpha_{0}x+\beta_{0}u\text{ and }P_{\varepsilon}=\left\{
x\in\Gamma\left\vert \left\vert x\right\vert \leq M\max\nolimits_{y\in\Gamma
}\left\vert y\right\vert +\varepsilon(M+1)\right.  \right\}  .
\]

(ii) Suppose that the Lebesgue measure of $\Gamma$ as in (i) is positive, and
either $\beta_{0}>0$ and the control range is $U=U^{+}=[0,\infty)$, or
$\beta_{0}<0$ and the control range is $U=U^{-}=(-\infty,0]$. Assume
$\mathrm{sign}(\gamma_{0})=-\mathrm{sign}(\beta_{0})$. Then for every
$\alpha\in(0,3\lambda)$ and $\zeta(r,s)=e^{-\alpha s}Mr$ with $M\geq1$, the
$\varepsilon$-practical stabilization entropy and the practical stabilization
entropy satisfy%
\[
\infty\geq h_{\mathrm{ps}}(\varepsilon,\zeta,\Gamma,\{0\},U)\geq\alpha
+\lambda-3\left\vert \alpha_{0}\right\vert \varepsilon\text{ and }\infty\geq
h_{\mathrm{ps}}(\zeta,\Gamma,\{0\},U)\geq\alpha+\lambda.
\]
For $\zeta_{\varepsilon}$ given by Theorem \ref{Theorem_equilibria1}, one has
$h_{\mathrm{ps}}(\varepsilon,\zeta_{\varepsilon},\Gamma,\{0\},U)<\infty$.
\end{theorem}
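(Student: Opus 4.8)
The plan is to read part (i) off the upper bound Theorem \ref{Theorem_upper}(i) and part (ii) off the lower bound Theorem \ref{Theorem_lower}(ii), with Theorem \ref{Theorem_equilibria1} supplying the stabilizing controls and the sign hypotheses doing the real work of taming the divergence $f_x=\lambda+2\alpha_0 x+\beta_0 u$. For part (i) I would first turn the feedback of Theorem \ref{Theorem_equilibria1} into open-loop controls: for $x_0\in\Gamma$ put $u_{x_0}(t):=k_{\mathrm{quad}}(\psi(t,x_0;k,q))$ as in (\ref{4.2}), so that by uniqueness $\varphi(t,x_0,u_{x_0})=\psi(t,x_0;k,q)$, and since the closed-loop trajectories stay in a fixed compact set and $\rho(\varepsilon)$ is large enough, $u_{x_0}$ takes values in $U_\varepsilon^\pm$. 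The $\varepsilon$-practical $(\zeta_\varepsilon,\Gamma,\{0\})$-stability then gives $|\varphi(t,x_0,u_{x_0})|\le\zeta_\varepsilon(|x_0|,t)+\varepsilon<\zeta_\varepsilon(|x_0|+\varepsilon,t)+\varepsilon$, the strict step using strict monotonicity of $\zeta_\varepsilon$ in its first argument. This is precisely hypothesis (\ref{ass2}), so Theorem \ref{Theorem_upper}(i) applies in dimension $d=1$ with the compact range $U_\varepsilon^\pm$ and yields $h_{\mathrm{ps}}(2\varepsilon,\zeta_\varepsilon,\Gamma,\{0\},U_\varepsilon^\pm)\le\max_{(x,u)\in P_\varepsilon\times U_\varepsilon^\pm}|f_x(x,u)|$; with $\zeta_\varepsilon(r,0)=M(\varepsilon)r$ and $\Lambda=\{0\}$ the relevant compact set matches the $P_\varepsilon$ recorded in the statement, and finiteness is immediate from compactness and continuity of $f_x$.

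For the lower bound in part (ii) I would apply Theorem \ref{Theorem_lower}(ii) with $\Lambda=\{0\}$, $d=1$ and $\mathrm{div}_x f=f_x=\lambda+2\alpha_0 x+\beta_0 u$. The sign hypotheses are exactly what makes this work: when $\beta_0>0$, $U=[0,\infty)$, or $\beta_0<0$, $U=(-\infty,0]$, one has $\beta_0 u\ge0$ on $U$, so $\min_{u\in U}\beta_0 u=0$ (attained at $u=0\in U$). This both secures the hypothesis $\inf_{(x,u)\in A\times U}f_x>-\infty$ of Theorem \ref{Theorem_lower} on bounded sets $A$ and keeps the minimum clean, while the compatibility $\mathrm{sign}(\gamma_0)=-\mathrm{sign}(\beta_0)$ is what lets Theorem \ref{Theorem_equilibria1} produce spanning controls for this $\Gamma$ and $U$. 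Since $U$ is non-compact I would run Theorem \ref{Theorem_lower}(ii) for each compact range $U\cap K$ (where $\min_{u\in U\cap K}\beta_0 u=0$ stays true for $0\in K$) and pass to the infimum of Remark \ref{Remark_unboundedU}. With $\min_{|x|\le\varepsilon}2\alpha_0 x=-2|\alpha_0|\varepsilon$ this gives $h_{\mathrm{ps}}(\varepsilon,\zeta,\Gamma,\{0\},U)\ge\alpha+\lambda-2|\alpha_0|\varepsilon$, which implies the stated (weaker) bound $\alpha+\lambda-3|\alpha_0|\varepsilon$; letting $\varepsilon\to0$, with $\min_{u\in U}f_x(0,u)=\lambda$, yields $h_{\mathrm{ps}}(\zeta,\Gamma,\{0\},U)\ge\alpha+\lambda$.

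The finiteness assertion $h_{\mathrm{ps}}(\varepsilon,\zeta_\varepsilon,\Gamma,\{0\},U)<\infty$ I would reduce, via Remark \ref{Remark_unboundedU}, to the compact range $U_\varepsilon^\pm$ (choosing $K$ with $U\cap K=U_\varepsilon^\pm$), and then re-run the covering construction of Theorem \ref{Theorem_upper}: the feedback controls attached to a minimal $(e^{-\tilde L\tau}\tilde\varepsilon)$-cover of $\Gamma$ form a practically spanning set of cardinality at most $c(e^{-\tilde L\tau}\tilde\varepsilon,\Gamma)$, whence $\overline{\lim}_{\tau\to\infty}\frac{1}{\tau}\log r_{\mathrm{ps}}\le\tilde L\dim_F(\Gamma)<\infty$. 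I expect the genuine obstacle to lie exactly here: the covering step introduces a persistent additive error $\tilde\varepsilon$ that, because $\zeta_\varepsilon(\cdot,t)$ decays like $e^{-\alpha t}$, cannot be absorbed into the first-argument slack for large $t$, so the estimate closes only at level $\varepsilon+\tilde\varepsilon$ (this is the same mechanism forcing the $2\varepsilon$ in part (i), and monotonicity in the tolerance, cf. (\ref{ineq1}), runs the wrong way). The resolution is that the feedback of Theorem \ref{Theorem_equilibria1} in fact places its stable equilibrium strictly inside the $\varepsilon$-neighborhood of the origin, so the closed-loop bound holds with an additive constant $\varepsilon_0<\varepsilon$; this strict margin supplies the room to absorb $\tilde\varepsilon$ for $\tilde\varepsilon<\varepsilon-\varepsilon_0$ and lets the count close at the bare $\varepsilon$-level, giving finiteness.
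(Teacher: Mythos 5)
Your proposal is correct, and its skeleton coincides with the paper's proof: part (i) converts the feedback of Theorem \ref{Theorem_equilibria1} into open-loop controls as in (\ref{4.2}), verifies hypothesis (\ref{ass2}) via strict monotonicity of $\zeta_\varepsilon$, and invokes Theorem \ref{Theorem_upper}(i) with $d=1$ and compact range $U_\varepsilon^\pm$; part (ii) combines Remark \ref{Remark_unboundedU} with Theorem \ref{Theorem_lower}(ii) and the sign hypotheses, which force $\min_u\beta_0 u=0$. Two execution differences are worth recording. First, in the lower bound you apply Theorem \ref{Theorem_lower}(ii) uniformly to every compact range $U\cap K$ (the bound $\alpha+\lambda-2\left\vert\alpha_0\right\vert\varepsilon$ is independent of $K$ since $\beta_0u\geq0$ on all of $U$) and then pass to the infimum; the paper instead picks a compact $K_0$ nearly attaining the infimum within error $\left\vert\alpha_0\right\vert\varepsilon$ and compares $U\cap K_0\subset U_\varepsilon^\pm$ by monotonicity of entropy in the control range. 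Your route is cleaner and gives the stronger constant $-2\left\vert\alpha_0\right\vert\varepsilon$, which implies the stated $-3\left\vert\alpha_0\right\vert\varepsilon$; the paper's extra $\left\vert\alpha_0\right\vert\varepsilon$ is pure bookkeeping from the near-optimal-$K_0$ step. Second, for the finiteness assertion you correctly flag a point the paper glosses over: part (i) bounds the entropy only at tolerance $2\varepsilon$, and since the entropy is decreasing in the tolerance (cf. (\ref{ineq1})) this does not by itself bound $h_{\mathrm{ps}}(\varepsilon,\cdot)$. The paper's one-line justification (\textquotedblleft follows from (i) by choosing $M(\varepsilon)$\textquotedblright) implicitly re-indexes, i.e., runs Theorem \ref{Theorem_equilibria1} and part (i) at tolerance $\varepsilon/2$ and accepts the correspondingly re-chosen $\mathcal{KL}$-function; your repair instead opens the appendix proof of Theorem \ref{Theorem_equilibria1} and uses that the stable equilibrium $e(q)$ lies strictly inside the $\varepsilon$-neighborhood (within $\varepsilon/(2M)$, and arbitrarily close to $0$ for $\left\vert q\right\vert$ large), so the closed-loop additive constant is some $\varepsilon_0<\varepsilon$, leaving room for the covering error; precisely one needs $2\tilde\varepsilon\leq\varepsilon-\varepsilon_0$ (or simply $\varepsilon_0\leq\varepsilon/2$, after which monotonicity suffices), rather than your stated $\tilde\varepsilon<\varepsilon-\varepsilon_0$, but this factor is immaterial. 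Both repairs are valid, both amount to re-choosing the constant in $\zeta_\varepsilon$ produced by Theorem \ref{Theorem_equilibria1}, and yours has the merit of making explicit why the bare-$\varepsilon$ claim in the statement is legitimate.
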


\begin{proof}
(i) Fix $\alpha\in(0,3\lambda)$. Theorem \ref{Theorem_equilibria1} shows for
every $x_{0}\in\Gamma$ there is a control $u(t)=k_{\mathrm{quad}}(\psi
(t,x_{0};k,q)),t\geq0$, with values in $U_{\varepsilon}^{\pm}$ such that%
\[
d(\varphi(t,x_{0},u),\{0\})<\zeta_{\varepsilon}(d(x_{0},\{0\}),t)+\varepsilon
\text{ for all }t\geq0.
\]
Then Theorem \ref{Theorem_upper}(i) yields the upper bound%
\[
h_{\mathrm{ps}}(2\varepsilon,\zeta_{\varepsilon},\Gamma,\{0\},U_{\varepsilon
}^{\pm})\leq L_{\varepsilon}=\max\left\{  \left\vert f_{x}(x,u)\right\vert
\left\vert (x,u)\in P_{\varepsilon}\times U_{\varepsilon}^{\pm}\right.
\right\}  ,
\]
where $f_{x}(x,u)$ and $P_{\varepsilon}$ are as stated in the assertion. This
proves (i).

(ii) Recall the modified notion of $\varepsilon$-practical stabilization
entropy for non-compact control ranges from Remark \ref{Remark_unboundedU},%
\[
h_{\mathrm{ps}}(\varepsilon,\zeta,\Gamma,\{0\},U)=\inf_{K}h_{\mathrm{ps}%
}(\varepsilon,\zeta,\Gamma^{\pm},\{0\},U\cap K),
\]
where the infimum is taken over all compact subset $K\subset\mathbb{R}^{m}$.
Let $K_{0}$ be compact with
\begin{align*}
\inf_{K}h_{\mathrm{ps}}(\varepsilon,\zeta,\Gamma,\{0\},U\cap K)  &  \geq
h_{\mathrm{ps}}(\varepsilon,\zeta,\Gamma,\{0\},U\cap K_{0})-\left\vert
\alpha_{0}\right\vert \varepsilon\\
&  \geq h_{\mathrm{ps}}(\varepsilon,\zeta,\Gamma,\{0\},U_{\varepsilon}^{\pm
})-\left\vert \alpha_{0}\right\vert \varepsilon,
\end{align*}
if $\rho(\varepsilon)>0$ is large enough such that $U\cap K_{0}\subset
U_{\varepsilon}^{+}$ if $\beta_{0}>0$ and $U\cap K_{0}\subset U_{\varepsilon
}^{-}$ if $\beta_{0}<0$. By Theorem \ref{Theorem_lower}(ii) we obtain%
\begin{align*}
h_{\mathrm{ps}}(\varepsilon,\zeta,\Gamma,\{0\},U)  &  \geq\alpha
+\min_{\left\vert x\right\vert \leq\varepsilon,u\in U_{\varepsilon}^{\pm}%
}f_{x}(x,u)-\left\vert \alpha_{0}\right\vert \varepsilon\\
&  =\alpha+\lambda+\min_{\left\vert x\right\vert \leq\varepsilon,u\in
U_{\varepsilon}^{\pm}}(2\alpha_{0}x+\beta_{0}u)-\left\vert \alpha
_{0}\right\vert \varepsilon\\
&  \geq\alpha+\lambda-3\left\vert \alpha_{0}\right\vert \varepsilon+\min_{u\in
U_{\varepsilon}^{\pm}}(\beta_{0}u).
\end{align*}
For $\gamma_{0}>0,\beta_{0}<0$ the control range is $U_{\varepsilon}%
^{-}=[-\rho(\varepsilon),0]$, and we get $\min_{u\in U_{\varepsilon}^{-}%
}(\beta_{0}u)=0$. For $\gamma_{0}<0,\beta_{0}>0$ the control range is
$U_{\varepsilon}^{+}=[0,\rho(\varepsilon)]$, and we get $\min_{u\in
U_{\varepsilon}^{+}}(\beta_{0}u)=\min_{u\in\lbrack0,\rho(\varepsilon)]}%
(\beta_{0}u)=0$. This proves the lower bound on the $\varepsilon$-practical
stabilization entropy. The assertion for the practical stabilization entropy
follows for $\varepsilon\rightarrow0$. The final assertion follows from (i) by
choosing $M(\varepsilon)$ from Theorem \ref{Theorem_equilibria1} and noting
that $h_{\mathrm{ps}}(\varepsilon,\zeta_{\varepsilon},\Gamma,\{0\},U^{\pm
})\leq h_{\mathrm{ps}}(\varepsilon,\zeta_{\varepsilon},\Gamma
,\{0\},U_{\varepsilon}^{\pm})$. This completes the proof of assertion (ii).
\end{proof}

\begin{remark}
Observe that in Theorem \ref{Theorem5.3}(i) the upper bound $L_{\varepsilon}$
converges to $\infty$ for $\varepsilon\rightarrow0$ if $\rho(\varepsilon
)\rightarrow\infty$ and $\beta_{0}\not =0$.
\end{remark}

\subsection{A scalar example with piecewise linear
feedback\label{Subsection5.3}}

Consider the scalar system given by%
\begin{equation}
\dot{x}=f(x,u)=\lambda x+\alpha_{0}x^{2}+\beta_{0}xu+\gamma_{0}u^{2}%
+\alpha_{1}x^{3}+\beta_{1}x^{2}u+\gamma_{1}xu^{2}+\eta_{1}u^{3}, \label{cubic}%
\end{equation}
where $\lambda>0,\alpha_{0},\beta_{0},\gamma_{0},\alpha_{1},\beta_{1}%
,\gamma_{1}$, and $\eta_{1}$ are real parameters with $\gamma_{0},\eta
_{1}\not =0$, and the controls take values $u(t)\in U\subset\mathbb{R}$.

We follow an approach in Hamzi and Krener \cite{HamzK03} to construct a
piecewise linear feedback, such that the closed loop systems has, in addition
to the unstable equilibrium at the origin, two stable equilibria arbitrarily
close to the origin. Then we evaluate the bounds for entropy from Section
\ref{Section3}.

The origin $x=0$ is an equilibrium corresponding to $u=0$. For $x=0$ the right
hand side of (\ref{cubic}) is given by $f(0,u)=\gamma_{0}u^{2}+\eta_{1}u^{3}$.
If the control range is $U=[-\rho,\rho]\subset\mathbb{R}$ with $\rho>0$ large
enough, there are control values $u_{1},u_{2}\in U$ with $f(0,u_{1})>0$ and
$f(0,u_{2})<0$, hence the system is controllable around the origin.

First we will show that system (\ref{cubic}) is practically stabilizable about
the origin using a piecewise linear feedback with $k_{1},k_{2}\in\mathbb{R}$
of the form%
\begin{equation}
k(x)=\left\{
\begin{array}
[c]{ccc}%
k_{1}x & \text{for} & x\geq0\\
k_{2}x & \text{for} & x\leq0
\end{array}
\right.  . \label{plinear}%
\end{equation}
We denote the solutions of the feedback system by $\psi(t,x_{0};k_{1}%
,k_{2}),t\geq0$.

\begin{theorem}
\label{Theorem_equilibria}Consider system (\ref{cubic}) with piecewise linear
feedback (\ref{plinear}), let $\Gamma\subset\mathbb{R}$ be a compact set of
initial values and $\Lambda=\{0\}$. For all $\varepsilon>0$ and $\alpha>0$
there is $M=M(\varepsilon,\alpha)\geq1$ such that the $\mathcal{KL}$-function
$\zeta_{\varepsilon}(r,s)=e^{-\alpha s}Mr,r,s\geq0$, satisfies the following
property. If the control range is $U_{\varepsilon}=[-\rho(\varepsilon
),\rho(\varepsilon)]$ with $\rho(\varepsilon)$ large enough, then there are
$k_{1},k_{2}\in\mathbb{R}$ such that the feedback system is $\varepsilon
$-practically $(\zeta,\Gamma,\{0\})$-stable.
\end{theorem}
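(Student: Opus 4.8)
The plan is to mimic the construction behind Theorem \ref{Theorem_equilibria1}: choose the feedback so that the closed loop system keeps the unstable equilibrium at the origin but acquires one stable equilibrium on each side of it, both lying inside $(-\varepsilon,\varepsilon)$, so that every trajectory starting in $\Gamma$ is trapped in an $\varepsilon$-neighborhood of $0$ after a uniformly bounded time. First I would insert (\ref{plinear}) into (\ref{cubic}) and record that on each half-line the closed loop equation has the form $\dot x=x\,g_{\pm}(x)$ with $g_{+}(x)=\lambda+a_{1}x+b_{1}x^{2}$ for $x\ge 0$ and $g_{-}(x)=\lambda+a_{2}x+b_{2}x^{2}$ for $x\le 0$, where $a_{i}=\alpha_{0}+\beta_{0}k_{i}+\gamma_{0}k_{i}^{2}$ and $b_{i}=\alpha_{1}+\beta_{1}k_{i}+\gamma_{1}k_{i}^{2}+\eta_{1}k_{i}^{3}$. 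The cubic coefficient $b_{i}$ is a cubic polynomial in $k_{i}$ with leading coefficient $\eta_{1}\neq 0$, hence can be driven to $-\infty$; the decisive point is that this can be arranged independently for $k_{1}$ and $k_{2}$.

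Next I would choose $k_{1}$ (so $b_{1}<0$ with $|b_{1}|$ large) and $k_{2}$ (so $b_{2}<0$ large). For $b_{i}<0$ the parabola $g_{i}$ opens downward with $g_{i}(0)=\lambda>0$, hence has exactly one positive and one negative root; on $[0,\infty)$ the relevant root $x_{+}=\tfrac{1}{2|b_{1}|}\big(a_{1}+\sqrt{a_{1}^{2}+4|b_{1}|\lambda}\big)\sim\sqrt{\lambda/|b_{1}|}$ is the only equilibrium besides the origin, and symmetrically $x_{-}\sim-\sqrt{\lambda/|b_{2}|}$ on $(-\infty,0]$. Reading off the sign pattern of $\dot x=x\,g_{\pm}(x)$, I would conclude that $x_{+}$ attracts all of $(0,\infty)$ and $x_{-}$ attracts all of $(-\infty,0)$, so that for $|b_{i}|$ large enough both equilibria lie in $(-\varepsilon,\varepsilon)$ and their basins contain $\Gamma$. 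Because $0$ is an equilibrium of both pieces ($f(0,0)=0$ and $k(0)=0$), uniqueness forbids any nonzero trajectory from reaching $0$, so each trajectory remains entirely in one half-line and the non-smooth gluing at $x=0$ causes no difficulty. Along such a trajectory $x$ stays in $[-\kappa,\kappa]$ with $\kappa=\max_{y\in\Gamma}|y|$, so the feedback satisfies $|k(x)|\le\max(|k_{1}|,|k_{2}|)\,\kappa$, which fixes an admissible $\rho(\varepsilon)$.

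It remains to produce the bound $|\psi(t,x_{0};k_{1},k_{2})|\le e^{-\alpha t}M|x_{0}|+\varepsilon$ for all $t\ge 0$. The key observation is that the prescribed rate $\alpha$ need not match the intrinsic convergence rate of the closed loop (a fixed number determined by the dynamics near $x_{\pm}$, not by $\alpha$); instead $\alpha$ is absorbed into $M$. On the compact set $\{\varepsilon\le|x|\le\kappa\}$ the field $x\,g_{\pm}(x)$ is bounded away from zero, since its only zeros in $[-\kappa,\kappa]$ are $0$ and $x_{\pm}\in(-\varepsilon,\varepsilon)$; hence there is a uniform time $T=T(\varepsilon)$ after which every trajectory has entered $(-\varepsilon,\varepsilon)$ and, by monotonicity of $x\,g_{\pm}(x)$ off the equilibria, remains there. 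For $t\ge T$ one has $|\psi(t,x_{0})|<\varepsilon\le e^{-\alpha t}M|x_{0}|+\varepsilon$; for $t\le T$ monotonicity gives $|\psi(t,x_{0})|\le|x_{0}|$, and choosing $M:=\max\{1,e^{\alpha T}\}$ forces $|x_{0}|\le e^{-\alpha t}M|x_{0}|+\varepsilon$ on $[0,T]$. Combining the two ranges yields the claim with $M=M(\varepsilon,\alpha)$.

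The step I expect to be most delicate is the last one: making a single estimate hold simultaneously for every $t\ge 0$ and every $x_{0}\in\Gamma$ while the natural decay rate is fixed and generally different from $\alpha$. The device of loading $\alpha$ into $M=e^{\alpha T}$ hinges on a genuinely uniform entry time $T$, which in turn requires the monotone, equilibrium-free behavior of $x\,g_{\pm}(x)$ on $\{\varepsilon\le|x|\le\kappa\}$; establishing this uniformity is the crux. By contrast, the placement of the two equilibria arbitrarily close to the origin via the cubic coefficient $\eta_{1}\neq 0$, and the verification that trajectories stay on one side of the origin, are routine once the independence of $k_{1}$ and $k_{2}$ is exploited.
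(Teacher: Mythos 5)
Your construction follows the same skeleton as the paper's proof: choose $\mathrm{sign}(k_i)=-\mathrm{sign}(\eta_1)$ with $|k_i|$ large so that the quadratic coefficient $b_i=\alpha_1+\beta_1k_i+\gamma_1k_i^2+\eta_1k_i^3$ (the paper's $\Delta_{1,i}(k_i)$) becomes large negative, obtain exactly one stable equilibrium on each side of the unstable origin, both tending to $0$, and use compactness of $\Gamma$ to get a uniform transient time. The genuine difference is in the final estimate. The paper proves convergence to the equilibria at the prescribed rate, $|\psi(t,x_0;k_1,k_2)-e_i(k_i)|\le e^{-\alpha t}M|x_0|$, and for this it must carry out the Jacobian computation $J(e_i(k_i))=-|e_i(k_i)|\sqrt{\Delta_{0,i}^2-4\lambda\Delta_{1,i}}\to-\infty$ to ensure the local exponential rate exceeds any given $\alpha$. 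You dispense with the linearization entirely: since only the practical bound $|\psi(t,x_0)|\le e^{-\alpha t}M|x_0|+\varepsilon$ is required, the additive $\varepsilon$ covers all $t\ge T(\varepsilon)$ once the trajectory is trapped in $(-\varepsilon,\varepsilon)$, and on $[0,T]$ the rate $\alpha$ is bought wholesale by $M=e^{\alpha T}$. This is simpler, it isolates the reason why the theorem holds for \emph{every} $\alpha>0$, and the same device would even remove the restriction $\alpha\in(0,3\lambda)$ from Theorem \ref{Theorem_equilibria1}; what it gives up is the stronger tracking statement (genuine exponential convergence toward $e_i$ at rate $\alpha$) that the paper's argument establishes along the way.

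Two local errors need repair, though neither invalidates the approach. First, the asymptotics $x_+\sim\sqrt{\lambda/|b_1|}$ (and $x_-\sim-\sqrt{\lambda/|b_2|}$) are wrong: they neglect $a_i=\alpha_0+\beta_0k_i+\gamma_0k_i^2$, which grows like $\gamma_0k_i^2$ (recall $\gamma_0\ne0$ is assumed for system (\ref{cubic})) and dominates $\sqrt{4|b_i|\lambda}\sim|k_i|^{3/2}$, so it cannot be dropped from the root formula. Keeping $a_i$ one finds $x_+\sim a_1/|b_1|=O(|k_1|^{-1})$ when $\gamma_0>0$ and $x_+\sim\lambda/|a_1|=O(k_1^{-2})$ when $\gamma_0<0$ --- exactly the case distinction made in the paper --- so the equilibria do tend to $0$ and your conclusion stands, but not by the formula you wrote. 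Second, the step ``for $t\le T$ monotonicity gives $|\psi(t,x_0)|\le|x_0|$'' is false for initial states strictly between $0$ and the nearby stable equilibrium: those trajectories move \emph{away} from the origin. The repair is immediate --- such trajectories remain in $(-\varepsilon,\varepsilon)$ for all $t\ge0$, increasing monotonically toward $x_+$ or decreasing toward $x_-$, so the practical bound holds for them trivially --- but this case must be split off before invoking monotonicity for the trajectories that start outside $(-\varepsilon,\varepsilon)$.
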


The proof of Theorem \ref{Theorem_equilibria} is given in the appendix.

Next we estimate the $\varepsilon$-practical stabilization entropy. In Theorem
\ref{Theorem_unbounded}(ii) we employ the modified notion for non-compact
control ranges in Remark \ref{Remark_unboundedU}.

\begin{theorem}
\label{Theorem_unbounded}Consider system (\ref{cubic}) with piecewise linear
feedback (\ref{plinear}) and let the assumptions of Theorem
\ref{Theorem_equilibria} be satisfied.

(i) For $\varepsilon>0$ let the $\mathcal{KL}$-function $\zeta_{\varepsilon}$
be given by Theorem \ref{Theorem_equilibria}. Then the $\varepsilon$-practical
stabilization entropy satisfies%
\[
h_{\mathrm{ps}}(2\varepsilon,\zeta_{\varepsilon},\Gamma,\{0\},U_{\varepsilon
})\leq L_{\varepsilon}:=\max\left\{  \left\vert f_{x}(x,u)\right\vert
\left\vert (x,u)\in P_{\varepsilon}\times U_{\varepsilon}\right.  \right\}
<\infty,
\]
where%
\begin{align*}
f_{x}(x,u)  &  =\lambda+2\alpha_{0}x+\beta_{0}u+3\alpha_{1}x^{2}+2\beta
_{1}xu+2\gamma_{1}u^{2},\\
P_{\varepsilon}  &  =\left\{  x\in\Gamma\left\vert \left\vert x\right\vert
\leq M\max_{y\in\Gamma}\left\vert y\right\vert +\varepsilon(M+1)\right.
\right\}  .
\end{align*}

(ii) Suppose that the Lebesgue measure of $\Gamma$ is positive and $\gamma
_{1}>0,\beta_{0}\not =0$, and the control range is $U=\mathbb{R}$. Then for a
$\mathcal{KL}$-function $\zeta=e^{-\alpha s}Mr,r,s\geq0$, with $\alpha>0$ and
$M\geq1$ the $\varepsilon$-practical stabilization entropy and the practical
stabilization entropy satisfy%
\begin{align*}
\infty &  \geq h_{\mathrm{ps}}(\varepsilon,\zeta,\Gamma,\{0\},U)\geq
\alpha+\lambda-3\left\vert \alpha_{0}\right\vert \varepsilon-3\left\vert
\alpha_{1}\right\vert \varepsilon^{2}-\frac{1}{4\gamma_{1}}(\beta
_{0}+2\mathrm{sign}(\beta_{0})\left\vert \beta_{1}\right\vert \varepsilon
)^{2},\\
\infty &  \geq h_{\mathrm{ps}}(\zeta,\Gamma,\{0\},U)\geq\alpha+\lambda
-\frac{\beta_{0}^{2}}{4\gamma_{1}}.
\end{align*}
For $\zeta_{\varepsilon}$ given by Theorem \ref{Theorem_equilibria} one has
$h_{\mathrm{ps}}(\varepsilon,\zeta_{\varepsilon},\Gamma,\{0\},U)<\infty$.
\end{theorem}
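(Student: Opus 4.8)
The plan is to follow the template of the proof of Theorem \ref{Theorem5.3}, substituting the piecewise linear feedback of Theorem \ref{Theorem_equilibria} for the quadratic one and carrying the additional cubic terms through the partial derivative $f_x$.

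For assertion (i) I would first invoke Theorem \ref{Theorem_equilibria}: for the given $\varepsilon>0$ and $\alpha>0$ it produces $k_1,k_2$, a constant $M=M(\varepsilon,\alpha)\geq1$, and a control range $U_\varepsilon$ for which the feedback system is $\varepsilon$-practically $(\zeta_\varepsilon,\Gamma,\{0\})$-stable. The associated open-loop control $u(t)=k(\psi(t,x_0;k_1,k_2))$ takes values in $U_\varepsilon$ and reproduces the closed-loop trajectory, so for every $x_0\in\Gamma$
\[
d(\varphi(t,x_0,u),\{0\})\leq\zeta_\varepsilon(d(x_0,\{0\}),t)+\varepsilon<\zeta_\varepsilon(d(x_0,\{0\})+\varepsilon,t)+\varepsilon\text{ for all }t\geq0,
\]
where the strict inequality uses that $\zeta_\varepsilon$ is strictly increasing in its first argument. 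This is precisely hypothesis (\ref{ass2}) of Theorem \ref{Theorem_upper}(i) for the compact control range $U_\varepsilon$, so that theorem gives $h_{\mathrm{ps}}(2\varepsilon,\zeta_\varepsilon,\Gamma,\{0\},U_\varepsilon)\leq L_\varepsilon$. Since $\Lambda=\{0\}$ and $\zeta_\varepsilon(r,0)=Mr$, the set $P_\varepsilon$ of Section \ref{Section3} reduces to the interval in the statement, $f_x$ is read off from (\ref{cubic}), and $L_\varepsilon<\infty$ by continuity of $f_x$ and compactness of $P_\varepsilon\times U_\varepsilon$.

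For the lower bound in (ii) I would use the modified entropy of Remark \ref{Remark_unboundedU}, namely $h_{\mathrm{ps}}(\varepsilon,\zeta,\Gamma,\{0\},\mathbb{R})=\inf_K h_{\mathrm{ps}}(\varepsilon,\zeta,\Gamma,\{0\},K)$ over compact $K\subset\mathbb{R}$. For each such $K$, Theorem \ref{Theorem_lower}(ii) with compact control range $K$ and $d=1$ gives $h_{\mathrm{ps}}(\varepsilon,\zeta,\Gamma,\{0\},K)\geq\alpha+\min_{|x|\leq\varepsilon,\,u\in K}f_x(x,u)$. Since $\gamma_1>0$, the map $u\mapsto f_x(x,u)$ is an upward parabola with vertex $u^{\ast}(x)=-(\beta_0+2\beta_1 x)/(2\gamma_1)$, which stays bounded for $|x|\leq\varepsilon$; hence for all sufficiently large $K$ the constrained minimum equals the unconstrained one, and passing to $\inf_K$ gives
\[
h_{\mathrm{ps}}(\varepsilon,\zeta,\Gamma,\{0\},\mathbb{R})\geq\alpha+\min_{|x|\leq\varepsilon,\,u\in\mathbb{R}}f_x(x,u).
\]
Completing the square in $u$ yields $\min_{u\in\mathbb{R}}f_x(x,u)=\lambda+2\alpha_0 x+3\alpha_1 x^2-(\beta_0+2\beta_1 x)^2/(4\gamma_1)$; bounding the first three terms below by $\lambda-2|\alpha_0|\varepsilon-3|\alpha_1|\varepsilon^2$ and the subtracted square above by $(|\beta_0|+2|\beta_1|\varepsilon)^2/(4\gamma_1)=(\beta_0+2\,\mathrm{sign}(\beta_0)|\beta_1|\varepsilon)^2/(4\gamma_1)$ over $|x|\leq\varepsilon$ produces the stated inequality (in fact with the sharper $-2|\alpha_0|\varepsilon$, which a fortiori gives the claimed $-3|\alpha_0|\varepsilon$).

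Letting $\varepsilon\rightarrow0$ and using the monotonicity (\ref{ineq1}) gives the lower bound $\alpha+\lambda-\beta_0^2/(4\gamma_1)$ for the practical stabilization entropy, while the finiteness claim is obtained exactly as in Theorem \ref{Theorem5.3}, from the finite bound $L_\varepsilon$ of (i) together with control-range monotonicity of the modified entropy, so that the lower bounds of (ii) are attained by a genuinely finite entropy. The step I expect to require the most care is the interchange of $\inf_K$ with the inner minimization over $u$: this is exactly where $\gamma_1>0$ is indispensable, for it is the coefficient of $u^2$ in $f_x$ (arising from the $\gamma_1 xu^2$ term) that makes the parabola open upward, so that $\min_{u\in\mathbb{R}}f_x$ is finite and attained at a vertex bounded uniformly for $|x|\leq\varepsilon$. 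This is in contrast with the quadratic example, where $f_x$ was affine in $u$ and the one-sided bounded ranges $U_\varepsilon^{\pm}$ had to be imposed; here the genuine $u^2$-dependence is what permits the two-sided range $U=\mathbb{R}$.
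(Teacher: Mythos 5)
Your proposal is correct, and in part (i) it coincides with the paper's argument: both verify hypothesis (\ref{ass2}) of Theorem \ref{Theorem_upper}(i) using the open-loop controls $u(t)=k(\psi(t,x_{0};k_{1},k_{2}))$ supplied by Theorem \ref{Theorem_equilibria}, and then read off $L_{\varepsilon}$ and $P_{\varepsilon}$ (you are in fact slightly more careful, making explicit the passage from the non-strict practical-stability estimate to the strict inequality in (\ref{ass2}) via strict monotonicity of $\zeta_{\varepsilon}$). In part (ii) you keep the paper's skeleton -- the reduction of the modified entropy of Remark \ref{Remark_unboundedU}, the volume-growth lower bound of Theorem \ref{Theorem_lower}(ii), and the minimization of the upward parabola in $u$ made possible by $\gamma_{1}>0$ -- but you handle the infimum over compact control ranges differently, and more cleanly. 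The paper chooses a compact $K_{0}$ realizing the infimum up to an error $\left\vert \alpha_{0}\right\vert \varepsilon$ and then uses $U\cap K_{0}\subset U_{\varepsilon}$ together with control-range monotonicity, which injects an extra $-\left\vert \alpha_{0}\right\vert \varepsilon$ into the final estimate; you instead apply Theorem \ref{Theorem_lower}(ii) to every compact $K$ and use the inequality $\min_{u\in K}f_{x}(x,u)\geq\min_{u\in\mathbb{R}}f_{x}(x,u)$, which holds uniformly in $K$ and hence passes directly to the infimum. This buys a marginally sharper constant ($-2\left\vert \alpha_{0}\right\vert \varepsilon$ instead of $-3\left\vert \alpha_{0}\right\vert \varepsilon$, which a fortiori gives the stated bound) and dispenses with the approximation step. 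A further point in your favor: you complete the square using the coefficient $\gamma_{1}$ on $u^{2}$, i.e., the actual partial derivative of the right-hand side of (\ref{cubic}); the paper's displayed $f_{x}$ carries a spurious factor ($2\gamma_{1}u^{2}$) and its proof is internally inconsistent on this point, while your computation is the consistent one that produces exactly the claimed term $-\frac{1}{4\gamma_{1}}(\beta_{0}+2\,\mathrm{sign}(\beta_{0})\left\vert \beta_{1}\right\vert \varepsilon)^{2}$. Your one-line treatment of the final finiteness assertion matches the paper's own level of detail, which likewise derives it from part (i) and control-range monotonicity.
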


\begin{proof}
(i) By Theorem \ref{Theorem_equilibria} for every $x_{0}\in\Gamma$ the control
$u(t)=u(\psi(t,x_{0};k_{1},k_{2})),\allowbreak t\geq0$, with values in
$U_{\varepsilon}$ yields%
\[
d(\varphi(t,x_{0},u),\Lambda)<\zeta(d(x_{0},\Lambda)+\varepsilon
,t)+\varepsilon\text{ for all }t\geq0.
\]
Then Theorem \ref{Theorem_upper}(i) yields the upper bound%
\[
h_{\mathrm{ps}}(2\varepsilon,\zeta,\Gamma,\{0\},U_{\varepsilon})\leq
L_{\varepsilon}:=\max\left\{  \left\vert f_{x}(x,u)\right\vert \left\vert
(x,u)\in P_{\varepsilon}\times U_{\varepsilon}\right.  \right\}  ,
\]
where $f_{x}(x,u)$ and $P_{\varepsilon}$ are as stated in the assertion. This
proves (i).

(ii) As in the proof of Theorem \ref{Theorem5.3}\textbf{ }we use that for
$\rho(\varepsilon)$ large enough%
\[
h_{\mathrm{ps}}(\varepsilon,\zeta,\Gamma,\Lambda,U)=\inf_{K}h_{\mathrm{ps}%
}(\varepsilon,\zeta,\Gamma,\Lambda,U\cap K)\geq h_{\mathrm{ps}}(\varepsilon
,\zeta,\Gamma,\Lambda,U_{\varepsilon})-\left\vert \alpha_{0}\right\vert
\varepsilon.
\]
Using the lower estimate provided by Theorem \ref{Theorem_lower}(ii) we obtain%
\begin{align*}
&  h_{\mathrm{ps}}(\varepsilon,\zeta,\Gamma,\{0\},U)\\
&  \geq\alpha+\min_{\left\vert x\right\vert \leq\varepsilon,u\in
U_{\varepsilon}}f_{x}(x,u)-\left\vert \alpha_{0}\right\vert \varepsilon\\
&  =\alpha+\lambda+\min_{\left\vert x\right\vert \leq\varepsilon,u\in
U_{\varepsilon}}\left\{  2\alpha_{0}x+3\alpha_{1}x^{2}+\left(  \beta
_{0}+2\beta_{1}x\right)  u+2\gamma_{1}u^{2}\right\}  -\left\vert \alpha
_{0}\right\vert \varepsilon\\
&  \geq\alpha+\lambda+\min_{\left\vert x\right\vert \leq\varepsilon}%
\{2\alpha_{0}x+3\alpha_{1}x^{2}\}+\min_{\left\vert x\right\vert \leq
\varepsilon}\min_{u\in U_{\varepsilon}}\left\{  \left(  \beta_{0}+2\beta
_{1}x\right)  u+2\gamma_{1}u^{2}\right\}  -\left\vert \alpha_{0}\right\vert
\varepsilon.
\end{align*}
Clearly, $\min_{\left\vert x\right\vert \leq\varepsilon}\{2\alpha_{0}%
x+3\alpha_{1}x^{2}\}-\left\vert \alpha_{0}\right\vert \varepsilon
\geq-3\left\vert \alpha_{0}\right\vert \varepsilon-3\left\vert \alpha
_{1}\right\vert \varepsilon^{2}$ and for the parabola $\left(  \beta
_{0}+2\beta_{1}x\right)  u+\gamma_{1}u^{2},u\in\mathbb{R}$, with $\gamma
_{1}>0$, the minimum is attained in $u=-\frac{\beta_{0}+2\beta_{1}x}%
{2\gamma_{1}}$. Hence for $\varepsilon>0$,%
\begin{align*}
\min_{\left\vert x\right\vert \leq\varepsilon}\min_{u\in U_{\varepsilon}%
}\left\{  \left(  \beta_{0}+2\beta_{1}x\right)  u+2\gamma_{1}u^{2}\right\}
&  =\min_{\left\vert x\right\vert \leq\varepsilon}\left(  -\frac{(\beta
_{0}+2\beta_{1}x)^{2}}{2\gamma_{1}}+\frac{(\beta_{0}+2\beta_{1}x)^{2}}%
{4\gamma_{1}}\right) \\
&  =-\frac{1}{4\gamma_{1}}\max_{\left\vert x\right\vert \leq\varepsilon}%
(\beta_{0}+2\beta_{1}x)^{2}\\
&  \geq-\frac{1}{4\gamma_{1}}(\beta_{0}+2\text{\textrm{sign}(}\beta
_{0})\left\vert \beta_{1}\right\vert \varepsilon)^{2}.
\end{align*}
Together this yields the lower estimate for $h_{\mathrm{ps}}(\varepsilon
,\zeta,\Gamma,\{0\},U)$. The estimate for $h_{\mathrm{ps}}(\zeta
,\Gamma,\{0\},U)$ follows by taking the limit for $\varepsilon\rightarrow0$.
The final assertion is a consequence of (i). This completes the proof of
assertion (ii).
\end{proof}

\subsection{A higher dimensional example\label{Subsection5.4}}

The following system is a generalization of the system in Subsection
\ref{Subsection5.2} by connecting it with a chain of integrators. This system
occurs in a quadratic normal form, cf. Krener, Kang, and Chang \cite[Theorem
2.1]{KrenKC04}. We will rely on a practical stabilization result due to Hamzi
and Krener \cite{HamzK03}. Consider the control system in $\mathbb{R}^{d}$
given by%
\[
\dot{x}_{1}=\lambda x_{1}+\alpha_{0}x_{1}^{2}+\beta_{0}x_{1}x_{2}+\sum
_{j=2}^{d}\gamma_{j}x_{j}^{2},\,\dot{x}_{2}=x_{3},\ldots,\dot{x}_{d}=u,
\]
where $\lambda>0$ and $\alpha_{0},\beta_{0},\gamma_{2}$ with $\gamma_{2}%
\not =0$ are real parameters and the controls take values $u(t)\in
U\subset\mathbb{R}$. Let%
\[
A_{2}=\left[
\begin{array}
[c]{ccccc}%
0 & 1 & 0 & \cdots & 0\\
0 & 0 & 1 & \cdots & 0\\
\vdots &  &  & \ddots & \vdots\\
0 & 0 & 0 & \cdots & 1\\
0 & 0 & 0 & \cdots & 0
\end{array}
\right]  ,\ B_{2}=\left[
\begin{array}
[c]{c}%
0\\
0\\
\vdots\\
0\\
1
\end{array}
\right]  ,
\]
and abbreviate $z=(x_{2},\ldots,x_{d})^{\top}$. Then we may write the system
as%
\begin{align}
\dot{x}_{1}  &  =\lambda x_{1}+\alpha_{0}x_{1}^{2}+\beta_{0}x_{1}x_{2}%
+\sum\nolimits_{j=2}^{d}\gamma_{j}x_{j}^{2}\label{5.4a}\\
\dot{z}  &  =A_{2}z+B_{2}u.\nonumber
\end{align}
We use linear feedback of the form%
\begin{equation}
k(x_{1},z)=k_{1}x_{1}+K_{2}z \label{5.4b}%
\end{equation}
with $k_{1}\in\mathbb{R}$ and choose $K_{2}\in\mathbb{R}^{1\times(d-1)}$ such
that $A_{2}+B_{2}K_{2}$ is stable. The feedback system becomes%
\begin{align}
\dot{x}_{1}  &  =\lambda x_{1}+\alpha_{0}x_{1}^{2}+\beta_{0}x_{1}x_{2}%
+\sum\nolimits_{j=2}^{d}\gamma_{j}x_{j}^{2}\label{5.4c}\\
\dot{z}  &  =\left(  A_{2}+B_{2}K_{2}\right)  z+B_{2}k_{1}x_{1}.\nonumber
\end{align}
The following theorem shows a practical stabilizability result.

\begin{theorem}
\label{Theorem_HK}Consider system (\ref{5.4a}) with linear feedback
(\ref{5.4b}) and $\Lambda=\{0\}$. Suppose that $\lambda>0$ is sufficiently
small. Fix $\varepsilon>0$ and let $\rho(\varepsilon)>0$ be large enough. If
$\gamma_{2}<0$ let the control range be $U_{\varepsilon}^{+}=[0,\rho
(\varepsilon)]$ and consider initial values in a compact set $\Gamma
=\Gamma^{+}\subset(0,\infty)$. If $\gamma_{2}>0$ let the control range be
$U_{\varepsilon}^{-}=[-\rho(\varepsilon),0]$ and consider initial values in a
compact set $\Gamma=\Gamma^{-}\subset(-\infty,0)$.

Then there is a $\mathcal{KL}$-function $\zeta_{\varepsilon}$ such that the
closed loop system (\ref{closed_loop1}) is $\varepsilon$-practically
$(\zeta_{\varepsilon},\Gamma,\{0\})$-stable.
\end{theorem}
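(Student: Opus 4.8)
The plan is to exploit the cascade structure of the feedback system (\ref{5.4c}): the $z$-subsystem is a stable linear system driven by $x_1$, while the $x_1$-equation is coupled to $z$ only through $x_2$ and the quadratic terms $\sum_{j=2}^{d}\gamma_j x_j^2$. First I would choose $K_2$ so that $A_2+B_2K_2$ is not merely Hurwitz but has spectrum far in the left half plane, turning the $z$-dynamics into a \emph{fast} subsystem relative to the slow $x_1$-dynamics, whose rate near the origin is of order $\lambda$. This sets up a singular-perturbation reduction, which is the mechanism underlying the Hamzi--Krener result \cite{HamzK03} on quadratic normal forms; note that (\ref{5.4a}) is in the Krener--Kang--Chang normal form \cite{KrenKC04}.

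Next I would compute the quasi--steady-state (slow) manifold by solving $(A_2+B_2K_2)z+B_2k_1x_1=0$. Because of the integrator-chain structure of $A_2,B_2$, this forces $x_j=0$ for $j\ge 3$ and $x_2=c\,x_1$ with a constant $c=c(k_1,K_2)$. Substituting $x_2=c\,x_1$ and $x_j=0$ into the $x_1$-equation yields the reduced scalar dynamics
\[
\dot x_1=\lambda x_1+(\alpha_0+\beta_0 c+\gamma_2 c^2)\,x_1^2,
\]
which is exactly the closed loop (\ref{closed_loop1}) of Subsection \ref{Subsection5.2} with $q=0$ and $\gamma_0$ replaced by $\gamma_2$, with $x_2$ in the role of the virtual control. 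Since $\gamma_2\neq 0$, the coefficient of $x_1^2$ is dominated by $\gamma_2 c^2$ for large $|c|$; choosing $|c|$ (equivalently $k_1$) large places the nonzero stable equilibrium of this scalar equation within distance $\varepsilon$ of the origin, on the side dictated by the sign of $\gamma_2$ (the positive half-line when $\gamma_2<0$, so that $\Gamma^{+}\subset(0,\infty)$ lies in its region of attraction, and symmetrically when $\gamma_2>0$). This is precisely the practical-stabilization mechanism already established for the scalar system in Theorem \ref{Theorem_equilibria1}.

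To pass from the reduced system to the full system (\ref{5.4c}) I would invoke the practical-stabilization result of Hamzi and Krener \cite{HamzK03}: for $\lambda>0$ small enough and sufficiently large time-scale separation, the full closed loop inherits the practical stability of its reduced dynamics, and their construction supplies a $\mathcal{KL}$-function $\zeta_\varepsilon$ together with the bound $d(x(t),\mathbf{B}(0,\varepsilon))\le\zeta_\varepsilon\big(d(x(0),\mathbf{B}(0,\varepsilon)),t\big)$. The radius $\rho(\varepsilon)$ is chosen large enough that the feedback values $k(x_1,z)=k_1x_1+K_2z$ stay in $U_\varepsilon^{\pm}$ along all trajectories starting in the compact set $\Gamma$, which remain in a compact region determined by $\Gamma$ and $\varepsilon$. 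Finally, since $d(x(0),\mathbf{B}(0,\varepsilon))\le\Vert x(0)\Vert$ and $\Vert x(t)\Vert\le\varepsilon+d(x(t),\mathbf{B}(0,\varepsilon))$, this bound gives $\Vert x(t)\Vert\le\zeta_\varepsilon(\Vert x(0)\Vert,t)+\varepsilon$, which is the required $\varepsilon$-practical $(\zeta_\varepsilon,\Gamma,\{0\})$-stability of Definition \ref{Definition2.1}.

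The main obstacle is the rigorous singular-perturbation estimate controlling the \emph{back-reaction} of the fast transient: during the initial phase $x_2$ differs from its quasi-steady value $c\,x_1$, and the coupling term $\beta_0 x_1 x_2$ together with $\sum_{j\ge 2}\gamma_j x_j^2$ could amplify $x_1$ before $z$ settles onto the slow manifold. Bounding this transient so that the reduced equilibrium structure and its region of attraction survive for all initial values in $\Gamma$ is exactly where the smallness of $\lambda$ and the large separation of time scales enter; this is the technical heart of the Hamzi--Krener argument that I would adapt here.
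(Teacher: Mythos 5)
Your proposal is correct in substance and ultimately rests on the same pillar as the paper's proof: both delegate the hard analytic step to Hamzi and Krener \cite{HamzK03} (proof of Theorem 3). The difference is in how the reduction to a scalar equation is framed. The paper cites the center-manifold reduction of \cite{HamzK03}, which is exactly what the hypothesis ``$\lambda>0$ sufficiently small'' supports: the $x_1$-direction is a near-center direction, the $z$-directions are stable, and for $\left\vert k_{1}\right\vert$ large the closed loop acquires a stable equilibrium $e^{\pm}$ arbitrarily close to $0$ with domain of attraction containing $\Gamma^{\pm}$, which supplies $\zeta_{\varepsilon}$. You instead install a fast/slow structure by choosing $K_{2}$ aggressively, compute the quasi-steady-state manifold explicitly (your computation $x_{j}=0$ for $j\geq3$, $x_{2}=cx_{1}$ with $c$ proportional to $k_{1}$ is correct, as is the resulting reduced equation $\dot{x}_{1}=\lambda x_{1}+(\alpha_{0}+\beta_{0}c+\gamma_{2}c^{2})x_{1}^{2}$), and thereby make explicit what the paper leaves implicit: the reduced dynamics is precisely the scalar example of Subsection \ref{Subsection5.2} analyzed in Theorem \ref{Theorem_equilibria1}, with sign and closeness of the nontrivial equilibrium governed by $\gamma_{2}c^{2}$. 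The caveat is that these are two different parameter regimes: if you justify the reduction by fast $z$-dynamics with $\lambda$ fixed, you cannot literally cite \cite{HamzK03} and would need Tikhonov/Fenichel-type estimates of your own, including the non-local claim that the domain of attraction of the lifted equilibrium contains all of the fixed compact set $\Gamma$ --- exactly the back-reaction obstacle you flag. Since in the end you invoke and adapt the Hamzi--Krener argument for that step (in the small-$\lambda$ regime the theorem actually assumes), your proof lands at the same level of rigor as the paper's, with the added value of the concrete identification of the reduced dynamics; the final conversion of the bound $d(x(t),\mathbf{B}(0,\varepsilon))\leq\zeta_{\varepsilon}(d(x(0),\mathbf{B}(0,\varepsilon)),t)$ into the form required by Definition \ref{Definition2.1} is also correct.
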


\begin{proof}
This follows from Hamzi and Krener \cite[proof of Theorem 3, pp.
44-47]{HamzK03}. Here a center manifold reduction is used to show the
following. If $\gamma_{2}<0$ there is $k_{1}<0$ such that the feedback system
(\ref{5.4c}) has an equilibrium $e^{+}$ with positive first component and
domain of attraction including $\Gamma^{+}$. If $\gamma_{2}>0$ there is
$k_{1}>0$ such that the feedback system (\ref{5.4c}) has an equilibrium
$e^{-}$ with negative first component and domain of attraction including
$\Gamma^{-}$ and corresponding $\mathcal{KL}$-function $\zeta_{\varepsilon}$.
Choosing $\left\vert k_{1}\right\vert $ large enough, these equilibria are
arbitrarily close to the origin \ This implies the assertion.
\end{proof}

\begin{theorem}
Consider system (\ref{5.4a}) with linear feedback (\ref{5.4b}) and let the
assumptions of Theorem \ref{Theorem_HK} be satisfied. For every $\varepsilon
>0$ define
\[
L_{\varepsilon}:=\max\left(  1,\max\nolimits_{x\in P_{\varepsilon}}\left\{
\lambda+2\left\vert \alpha_{0}\right\vert x_{1}+\left\vert \beta
_{0}\right\vert x_{2}+2\sum\nolimits_{j=2}^{d}\left\vert \gamma_{j}\right\vert
x_{j}\right\}  \right)  ,
\]
where $P_{\varepsilon}:=\left\{  x\in\mathbb{R}^{d}\left\vert \left\Vert
x\right\Vert \leq\zeta_{\varepsilon}(\max_{y\in\Gamma}\left\Vert y\right\Vert
+\varepsilon,0)+\varepsilon\right.  \right\}  $. Then for every $\varepsilon
>0$ the $\varepsilon$-practical stabilization entropy satisfies%
\[
\lambda-(2\left\vert \alpha_{0}\right\vert +\left\vert \beta_{0}\right\vert
)\varepsilon\leq h_{\mathrm{ps}}(\varepsilon,\zeta_{\varepsilon}%
,\Gamma,\{0\})\leq L_{\varepsilon/2}\,\,d.
\]

\end{theorem}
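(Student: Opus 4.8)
\emph{Overall approach.} The statement is a two-sided bound, and each side is a direct instance of a general estimate from Section \ref{Section3} applied to the concrete vector field of (\ref{5.4a}): the upper bound comes from Theorem \ref{Theorem_upper}(i) and the lower bound from Theorem \ref{Theorem_lower}(i). Since the $\mathcal{KL}$-function $\zeta_\varepsilon$ furnished by Theorem \ref{Theorem_HK} is only asserted to exist (and is not exponential), part (i) of each theorem is the right tool. Theorem \ref{Theorem_HK} supplies exactly the practical-stabilizability hypothesis both estimates need, and the control range $U=U_\varepsilon^\pm$ is compact, so the remaining work is to compute the Jacobian $f_x$, extract its trace and a bound on its norm, and carry out the $\varepsilon$-bookkeeping.

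\emph{Lower bound.} For $f(x,u)$ as in (\ref{5.4a}) the only state-dependent row of $f_x(x,u)$ is the first; the equations $\dot x_2=x_3,\ldots,\dot x_d=u$ give the constant shift block $A_2$, whose diagonal entries $\partial\dot x_i/\partial x_i$ vanish for $i\ge 2$. Hence
\[
\mathrm{div}_x f(x,u)=\mathrm{tr}\,f_x(x,u)=\lambda+2\alpha_0 x_1+\beta_0 x_2,
\]
independent of $u$. As $\overline{N(\{0\};\varepsilon)}=\{x:\|x\|\le\varepsilon\}$ forces $|x_1|,|x_2|\le\varepsilon$, minimizing this affine function yields $\min_{(x,u)\in\overline{N(\{0\};\varepsilon)}\times U}\mathrm{div}_x f\ge\lambda-(2|\alpha_0|+|\beta_0|)\varepsilon$. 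The remaining hypotheses of Theorem \ref{Theorem_lower}(i) — $\Gamma$ of positive Lebesgue measure, $f_x$ continuous and bounded below on bounded sets, $U$ compact — hold here, so that theorem gives $h_{\mathrm{ps}}(\varepsilon,\zeta_\varepsilon,\Gamma,\{0\})\ge\lambda-(2|\alpha_0|+|\beta_0|)\varepsilon$.

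\emph{Upper bound.} By Theorem \ref{Theorem_HK}, with the stable equilibrium $e^\pm$ placed within $\varepsilon/2$ of the origin, the feedback (\ref{5.4b}) produces for each $x_0\in\Gamma$ an open-loop control $u(t)=k(\psi(t,x_0;k_1,K_2))$ with $d(\varphi(t,x_0,u),\{0\})<\zeta_\varepsilon(d(x_0,\{0\})+\varepsilon/2,t)+\varepsilon/2$ for all $t\ge0$. This is the hypothesis of Theorem \ref{Theorem_upper}(i) with slack $\varepsilon/2$, whence
\[
h_{\mathrm{ps}}(\varepsilon,\zeta_\varepsilon,\Gamma,\{0\})=h_{\mathrm{ps}}\bigl(2\cdot\tfrac{\varepsilon}{2},\zeta_\varepsilon,\Gamma,\{0\}\bigr)\le d\cdot\max_{(x,u)\in P_{\varepsilon/2}\times U}\|f_x(x,u)\|,
\]
with $P_{\varepsilon/2}$ obtained by substituting $\varepsilon/2$ into the set of the statement. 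It then remains to estimate this norm: the shift block $A_2$ contributes at most $1$, while the first row is bounded over $P_{\varepsilon/2}$ by $\lambda+2|\alpha_0|x_1+|\beta_0|x_2+2\sum_{j=2}^d|\gamma_j|x_j$; taking the larger contribution gives $L_{\varepsilon/2}$ and hence $h_{\mathrm{ps}}(\varepsilon,\zeta_\varepsilon,\Gamma,\{0\})\le L_{\varepsilon/2}\,d$.

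\emph{Main obstacle.} I expect the delicate points to be the final Jacobian estimate and the accompanying $\varepsilon$-accounting. One must fix a norm on $\mathbb{R}^d$ in which $A_2$ has norm at most $1$ and in which the off-diagonal first-row entries $\beta_0 x_1+2\gamma_2 x_2$ and $2\gamma_j x_j$ are absorbed into the stated majorant; reconciling the induced operator norm of $f_x$ with the explicit expression defining $L_\varepsilon$ is the only genuinely computational step, the rest being citation of Theorems \ref{Theorem_upper}, \ref{Theorem_lower}, and \ref{Theorem_HK}. I would also verify the relabelling $\varepsilon\leftrightarrow\varepsilon/2$ — Theorem \ref{Theorem_upper}(i) turns slack $\varepsilon/2$ into the index $\varepsilon$, which is why the constant appears as $L_{\varepsilon/2}$ — and confirm that the single $\mathcal{KL}$-function $\zeta_\varepsilon$ from Theorem \ref{Theorem_HK} can legitimately be used with the smaller slack (equivalently, that moving $e^\pm$ to within $\varepsilon/2$ does not spoil the decay bound encoded in $\zeta_\varepsilon$).
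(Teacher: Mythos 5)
Your proposal is correct and follows essentially the same route as the paper: the lower bound via Theorem \ref{Theorem_lower}(i) using $\mathrm{div}_{x}f(x,u)=\lambda+2\alpha_{0}x_{1}+\beta_{0}x_{2}$ (since $\mathrm{tr}A_{2}=0$), and the upper bound by feeding the practical stability from Theorem \ref{Theorem_HK} with slack $\varepsilon/2$ into Theorem \ref{Theorem_upper}(i) and estimating $\left\Vert f_{x}\right\Vert$ in the max-norm, where the row-sum of the first row against the shift block's contribution of $1$ produces exactly $L_{\varepsilon/2}$. The two points you flag as delicate (absorbing the off-diagonal entry $\beta_{0}x_{1}+2\gamma_{2}x_{2}$ into the stated majorant, and the $\zeta_{\varepsilon}$ versus $\zeta_{\varepsilon/2}$ bookkeeping) are indeed the places where the paper's own proof is silent or slightly loose, so your caution there is warranted rather than a defect.
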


\begin{proof}
Denoting the right hand side of (\ref{5.4a}) by $f(x,u)$ one finds%
\[
f_{x}(x,u)=\left[
\begin{array}
[c]{cccc}%
\lambda+2\alpha_{0}x_{1}+\beta_{0}x_{2} & \beta_{0}x_{1}+2\gamma_{2}x_{2} &
\cdots & 2\gamma_{d}x_{d}\\
0 &  & A_{2} &
\end{array}
\right]  .
\]
Using the max-norm in $\mathbb{R}^{d}$ and $\mathrm{tr}A_{2}=0$, Theorem
\ref{Theorem_lower}(i) yields the lower bound%
\begin{align*}
h_{\mathrm{ps}}(\varepsilon,\zeta_{\varepsilon},\Gamma,\{0\})  &  \geq
\min\left\{  \mathrm{tr}f_{x}(x,u)\left\vert (x,u)\in\mathbf{B}(0,\varepsilon
)\times U_{\varepsilon}\right.  \right\} \\
&  =\min\left\{  \lambda+2\alpha_{0}x_{1}+\beta_{0}x_{2}+\mathrm{tr}%
A_{2}\left\vert x\in\mathbf{B}(0,\varepsilon)\right.  \right\} \\
&  =\lambda-2\left\vert \alpha_{0}\right\vert \varepsilon-\left\vert \beta
_{0}\right\vert \varepsilon.
\end{align*}
Theorem \ref{Theorem_upper}(i) yields the upper bound $L_{\varepsilon/2}\,d$
with $L_{\varepsilon/2}:=\max_{(x,u)\in P_{\varepsilon/2}\times U}\left\Vert
f_{x}(x,u)\right\Vert \allowbreak<\infty$. Using the matrix norm induced by
the max-norm in $\mathbb{R}^{d}$ we get%
\[
\left\Vert f_{x}(x,u)\right\Vert =\max\left\{  \left\vert \lambda+2\alpha
_{0}x_{1}+\beta_{0}x_{2}\right\vert +\sum_{j=2}^{d}\left\vert 2\gamma_{j}%
x_{j}\right\vert ,1\right\}  ,
\]
hence%
\[
\max\left\{  \left\Vert f_{x}(x,u)\right\Vert \left\vert (x,u)\in
P_{\frac{\varepsilon}{2}}\times U_{\frac{\varepsilon}{2}}\right.  \right\}
=\max_{x\in P_{\frac{\varepsilon}{2}}}\left\{  \lambda+2\left\vert \alpha
_{0}\right\vert x_{1}+\left\vert \beta_{0}\right\vert x_{2}+2\sum_{j=2}%
^{d}\left\vert \gamma_{j}\right\vert x_{j}\right\}  .
\]

\end{proof}

\section{Conclusions and open questions\label{Section6}}

In Section \ref{Section3}, we have derived upper and lower bounds for
$\varepsilon$-practical stabilization entropy and practical stabilization
entropy (i.e., in the limit for $\varepsilon\rightarrow0$) based on general
$\mathcal{KL}$-functions $\zeta$, with special attention to exponential
$\mathcal{KL}$-functions. Section \ref{Section4} presents an upper bound for
$\varepsilon$-practical stabilization entropy based on an $\varepsilon
$-entropy notion for feedbacks. In Section \ref{Section5} this is used for
linear control systems in order to prove that the practical stabilization
entropy and the stabilization entropy coincide provided that the system is
stabilizable and to characterize them by a spectral condition. Two scalar
examples are analyzed where quadratic feedbacks and piecewise linear
feedbacks, resp., only lead to $\varepsilon$-practical stabilization for every
$\varepsilon>0$. Here and for a similar higher dimensional system the employed
exponential $\mathcal{KL}$-functions depend on $\varepsilon$ and the upper
bounds diverge for $\varepsilon\rightarrow0$.

Major research problems include the following: Suppose that the considered
control system is $\varepsilon$-practically stabilizable for every
$\varepsilon>0$, but not stabilizable (either by appropriate feedbacks or in
the sense of (\ref{ass2}), where open loop controls are considered). Will the
corresponding $\varepsilon$-practical stabilization entropies diverge for
$\varepsilon\rightarrow0$? It is also not clear to us, when there exist
$\mathcal{KL}$-functions which work for every $\varepsilon>0$. Furthermore,
suppose that the system is stabilizable. Is there a gap between the practical
stabilization entropy and the stabilization entropy? In the linear case,
Theorem \ref{Theorem6.2} shows that both entropy notions coincide. The answer
will be of interest for control devices which only lead to practical
stability, but not to stability. Furthermore, the relations of practical
stabilization entropy to minimal data rates for digital communication channels
merits exploration.

Our results do not yield formulas for practical stabilization entropy. In the
well studied case of invariance entropy, only for hyperbolic control systems
such strong results are available, cf. Kawan and Da Silva \cite{KawaDS16}. In
this context Kawan \cite{Kawa20} shows a lower bound for stabilization in
terms of topological pressure under a uniform hyperbolicity assumption. See
also Kawan \cite{Kawa17}\textbf{ }for a general discussion of hyperbolicity in
the context of control systems. \ However, hyperbolicity conditions are not
directly applicable in our framework, since it is not local (with respect to
$\Lambda$).

\section{Appendix\label{Appendix}}

In this appendix we prove Theorem \ref{Theorem_equilibria1} and Theorem
\ref{Theorem_equilibria}.

\begin{proof}
[\textit{of Theorem \ref{Theorem_equilibria1}}]. Equilibria different from the
trivial equilibrium $x=0$ satisfy%
\begin{equation}
0=\lambda+(\alpha_{0}+\beta_{0}k+\gamma_{0}k^{2})x+q(\beta_{0}+2\gamma
_{0}k)x^{2}+\gamma_{0}q^{2}x^{3}. \label{equilibrium1}%
\end{equation}
We choose the constant $k$ in order to eliminate the quadratic term,%
\begin{equation}
\beta_{0}+2\gamma_{0}k=0,\text{ i.e., }k=-\frac{\beta_{0}}{2\gamma_{0}}.
\label{k}%
\end{equation}
Then it is immediately clear that the properties of the feedback system
(\ref{closed_loop1}) do not depend on the sign of $q$. Furthermore one finds%
\begin{equation}
\alpha_{0}+\beta_{0}k+\gamma_{0}k^{2}=\alpha_{0}-\frac{\beta_{0}^{2}}%
{2\gamma_{0}}+\frac{\beta_{0}^{2}}{4\gamma_{0}}=\frac{4\alpha_{0}\gamma
_{0}-\beta_{0}^{2}}{4\gamma_{0}}, \label{k_next}%
\end{equation}
hence the equilibria are determined by%
\begin{equation}
x^{3}+\frac{4\alpha_{0}\gamma_{0}-\beta_{0}^{2}}{4\gamma_{0}^{2}q^{2}}%
x+\frac{\lambda}{\gamma_{0}q^{2}}=0. \label{x}%
\end{equation}
The solutions of this cubic equation in reduced form $x^{3}+3ax+b=0$ are given
by the classical Cardano formula, cf. e.g. Zwillinger \cite[Subsection
2.3.2]{CRC}. For (\ref{x}) one has $a=\frac{4\alpha_{0}\gamma_{0}-\beta
_{0}^{2}}{12\gamma_{0}^{2}q^{2}}$ and $b=\frac{\lambda}{\gamma_{0}q^{2}}$. If
the discriminant%
\[
D:=4a^{3}+b^{2}=\frac{4}{q^{6}}\left(  \frac{4\alpha_{0}\gamma_{0}-\beta
_{0}^{2}}{12\gamma_{0}^{2}}\right)  ^{3}+\frac{1}{q^{4}}\frac{\lambda^{2}%
}{\gamma_{0}^{2}}>0,
\]
there is a unique real real solution, hence a unique nontrivial equilibrium,
given by%
\[
e(q)=(-\frac{b}{2}+\frac{1}{2}\sqrt{D})^{\frac{1}{3}}+(-\frac{b}{2}-\frac
{1}{2}\sqrt{D})^{\frac{1}{3}}=\left(  -\frac{\lambda}{2\gamma_{0}q^{2}}%
+\frac{1}{2}\sqrt{D}\right)  ^{\frac{1}{3}}+\left(  -\frac{\lambda}%
{2\gamma_{0}q^{2}}-\frac{1}{2}\sqrt{D}\right)  ^{\frac{1}{3}}.
\]
The condition $D>0$ holds for $\left\vert q\right\vert $ large enough. The
dominant term for $\left\vert q\right\vert \rightarrow\infty$ in $D$ is
$\frac{1}{q^{4}}\frac{\lambda^{2}}{\gamma_{0}^{2}}$ and hence the dominant
term in $e(q)$ is%
\begin{equation}
\left(  -\frac{\lambda}{2\gamma_{0}q^{2}}+\frac{1}{2}\frac{\lambda}%
{q^{2}\gamma_{0}}\right)  ^{1/3}+\left(  -\frac{\lambda}{2\gamma_{0}q^{2}%
}-\frac{1}{2}\frac{\lambda}{q^{2}\gamma_{0}}\right)  ^{1/3}=-\frac{1}{q^{2/3}%
}\frac{\lambda^{1/3}}{\gamma_{0}^{1/3}}. \label{5.9}%
\end{equation}
Thus for $\left\vert q\right\vert \rightarrow\infty$ one has $e(q)\rightarrow
0$ with $\left\vert q\right\vert ^{-2/3}$. Let $\left\vert q\right\vert $ be
large enough. Then for $\gamma_{0}>0$ the initial values are taken in
$\Gamma\subset(-\infty,e(q))$ and $e(q)<0$. For $\gamma_{0}<0$, the initial
values are taken in $\Gamma\subset(e(q),\infty)$ and $e(q)>0$.

Next we analyze stability of the equilibria. Since the equilibrium in the
origin is unstable, general properties of scalar autonomous differential
equations imply that $e(q)$ is asymptotically stable with domain of attraction
given by $(-\infty,0)$ if $e(q)<0$ and $(0,\infty)$ if $e(q)>0$. In order to
prove exponential stability, we compute the Jacobian of (\ref{closed_loop1})
with (\ref{k}) and (\ref{k_next})%
\begin{align*}
J(x)  &  =\frac{\partial}{\partial x}\left[  \lambda x+(\alpha_{0}+\beta
_{0}k+\gamma_{0}k^{2})x^{2}+q(\beta_{0}+2\gamma_{0}k)x^{3}+\gamma_{0}%
q^{2}x^{4}\right] \\
&  =\lambda+\frac{4\alpha_{0}\gamma_{0}-\beta_{0}^{2}}{2\gamma_{0}}%
x+4\gamma_{0}q^{2}x^{3}.
\end{align*}
For $x=e(q)$ we obtain by (\ref{5.9}) that for $\left\vert q\right\vert
\rightarrow\infty$ the dominant term in the Jacobian $J(e(q))$ is
\[
\lambda+\frac{4\alpha_{0}\gamma_{0}-\beta_{0}^{2}}{2\gamma_{0}}\left(
-\frac{1}{q^{2/3}}\frac{\lambda^{1/3}}{\gamma_{0}^{1/3}}\right)  -4\gamma
_{0}q^{2}\frac{1}{q^{2}}\frac{\lambda}{\gamma_{0}}=\lambda-\frac{1}{q^{2/3}%
}\frac{4\alpha_{0}\gamma_{0}-\beta_{0}^{2}}{2\gamma_{0}}\frac{\lambda^{1/3}%
}{\gamma_{0}^{1/3}}-4\lambda\rightarrow-3\lambda.
\]
Hence for $\left\vert q\right\vert $ large enough, the equilibrium $e(q)$ is
locally exponentially stable with $\zeta(r,s)=e^{-\alpha s}r$ for
$0<\alpha<3\lambda$.

\textbf{Claim}: There is $M=M(\varepsilon)\geq1$ such that for every
$\alpha\in(0,3\lambda)$ and every $x_{0}\in\Gamma$ the following exponential
estimate holds,%
\begin{equation}
\left\vert \psi(t,x_{0};k,q)-e(q)\right\vert \leq e^{-\alpha t}M\left\vert
x_{0}-e(q)\right\vert \text{ for }t\geq0. \label{exp_est}%
\end{equation}
For the proof we first consider the case $e(q)>0$. Then one can choose
$z=z(q)\in\mathbb{R}$ in the domain of exponential attraction such that
$e(q)<z$, hence%
\[
\psi(t,z;k,q)-e(q)\leq e^{-\alpha t}(z-e(q))\text{ for }t\geq0.
\]
For every $x_{0}\in\Gamma$ there is $T_{x_{0}}>0$ with $\psi(T_{x_{0}}%
,x_{0};k,q)=z$, hence for $t\geq0$%
\begin{equation}
\psi(t+T_{x_{0}},x_{0};k,q)-e(q)=\psi(t,\psi(T_{x_{0}},x_{0}%
;k,q);k,q)-e(q)\leq e^{-\alpha t}\left(  z-e(q)\right)  . \label{Tx}%
\end{equation}
By compactness of $\Gamma$ it follows that $T:=\max_{x_{0}\in\Gamma}T_{x_{0}%
}<\infty$ and hence with $M=e^{3\lambda T}$ it follows for $x_{0}\in\Gamma$%
\[
\psi(t,x_{0};k,q)-e(q)\leq z-e(q)\leq e^{-\alpha T}M\max_{y\in\Gamma
}\{y-e(q)\}\text{ for }t\in\lbrack0,T],
\]
and for $t>T_{x_{0}}$ this yields together with (\ref{Tx}) and (\ref{exp_est}%
),
\begin{align*}
\psi(t,x_{0};k,q)-e(q)  &  =\psi(t-T_{x_{0}},\psi(T_{x_{0}},x_{0}%
;k,q);k,q)-e(q)\leq e^{-\alpha(t-T_{x_{0}})}\left(  z-e(q)\right) \\
&  \leq e^{-\alpha(t-T_{x_{0}})}e^{-\alpha T_{x_{0}}}M\left(  x_{0}%
-e(q)\right)  =e^{-\alpha t}M\left(  x_{0}-e(q)\right)  .
\end{align*}
Note that $T_{x_{0}}$ and $T$ depend on $q$, since the point $z=z(q)$ is taken
in the domain of exponential attraction of $e(q)$, hence depends on $q$. This
entails that $M$ depends on $\varepsilon$, since $\left\vert q\right\vert $ is
taken large enough in dependence on $\varepsilon$.

Analogously, one argues for $e(q)<0$. Thus the \textbf{claim} is proved.

Choosing $\left\vert q\right\vert $ large enough, it follows that
$e(q)\in(-\frac{\varepsilon}{2M},\frac{\varepsilon}{2M})$, hence
\begin{align*}
\left\vert \psi(t,x_{0};k,q)\right\vert  &  \leq\left\vert \psi(t,x_{0}%
;k,q)-e(q)\right\vert +\left\vert e(q)\right\vert \leq e^{-\alpha
t}M\left\vert x_{0}-e(q)\right\vert +\left\vert e(q)\right\vert \\
&  \leq e^{-\alpha t}M\left\vert x_{0}\right\vert +\varepsilon.
\end{align*}
Thus the system is $\varepsilon$-practically $(\zeta_{\varepsilon}%
,\Gamma,\{0\})$-stable with $\zeta_{\varepsilon}(r,s)=e^{-\alpha
s}M(\varepsilon)r$. Our assumptions on the control range $U_{\varepsilon}%
^{\pm}$ guarantee that the values of the quadratic feedback (\ref{quadratic})
can be taken in $U_{\varepsilon}^{\pm}$ for $x_{0}\in\Gamma$. \medskip
\end{proof}

\begin{proof}
[\textit{of Theorem \ref{Theorem_equilibria}}]. We will show that for every
$\varepsilon>0$ there are $k_{1},k_{2}\in\mathbb{R}$ such that, besides the
trivial equilibrium at the origin, the feedback system has two equilibria in
$\left(  -\varepsilon,\varepsilon\right)  $ which are locally exponentially stable.

Let $i\in\{1,2\}$. Any nontrivial equilibrium $x$ must satisfy
\begin{equation}
0=\lambda+[\alpha_{0}+\beta_{0}k_{i}+\gamma_{0}k_{i}^{2}]x+[\alpha_{1}%
+\beta_{1}k_{i}+\gamma_{1}k_{i}^{2}+\eta_{1}k_{i}^{3}]x^{2}.
\label{equilibrium0}%
\end{equation}
Abbreviate%
\[
\Delta_{0,i}(k_{i})=\alpha_{0}+\beta_{0}k_{i}+\gamma_{0}k_{i}^{2}\text{ and
}\Delta_{1,i}(k_{i})=\alpha_{1}+\beta_{1}k_{i}+\gamma_{1}k_{i}^{2}+\eta
_{1}k_{i}^{3}.
\]
Then the solutions of (\ref{equilibrium0}) are%
\begin{equation}
x_{i}^{\pm}(k_{i})=\frac{-\Delta_{0,i}{(k}_{i})\pm\sqrt{\Delta_{0,i}^{2}%
{(k}_{i})-4\lambda\Delta_{1,i}{(k}_{i})}}{2\Delta_{1,i}{(k}_{i})}. \label{x_i}%
\end{equation}
\textbf{Claim: }The feedback system with $\left\vert k_{i}\right\vert $ large
enough and $\mathrm{sign}(k_{i})\allowbreak=-\mathrm{sign}(\eta_{1})$ has
three equilibria given by%
\[
e_{2}(k_{2}):=x_{2}^{+}(k_{2})<0<e_{1}(k_{1}):=x_{1}^{-}(k_{1}).
\]
For the proof of the claim observe first that $x_{i}^{\pm}(k_{i})\in
\mathbb{R}$ if%
\begin{equation}
{\Delta_{0.i}^{2}(k}_{i}){-4\lambda\Delta_{1,i}(k}_{i})>0, \label{A1}%
\end{equation}
and $x_{1}^{\pm}(k_{1})$ is an equilibrium iff it is positive and $x_{2}^{\pm
}(k_{2})$ is an equilibrium iff it is negative. For $k_{i}$ as in the claim it
follows that ${\Delta_{1,i}(k}_{i})<0$. Hence (\ref{A1}) holds and $x_{i}%
^{\pm}(k_{i})\in\mathbb{R}$. For $\left\vert k_{i}\right\vert \rightarrow
\infty$ it follows that $\Delta_{1,i}{(k}_{i})\rightarrow-\infty$ with
$\left\vert k_{i}\right\vert ^{3}$.

We distinguish the following two cases.

- Let $\gamma_{0}>0$. For $\left\vert k_{i}\right\vert \rightarrow\infty$ it
follows that $\Delta_{0,i}(k_{i})\rightarrow\infty$ with $\left\vert
k_{i}\right\vert ^{2}$, hence $x_{i}^{\pm}(k_{i})\rightarrow0$ with
$\left\vert k_{i}\right\vert ^{-1}$, and $x_{i}^{-}(k_{i})>0$ and $x_{i}%
^{+}(k_{i})<0$ for $\left\vert k_{i}\right\vert $ large enough.

- Let $\gamma_{0}<0$. For $\left\vert k_{i}\right\vert \rightarrow\infty$ it
follows that $\Delta_{0,i}(k_{i})\rightarrow-\infty$ with $\left\vert
k_{i}\right\vert ^{2}$, hence $x_{i}^{\pm}(k_{i})\rightarrow0$ with
$\left\vert k_{i}\right\vert ^{-1}$. Again, $x_{i}^{-}(k_{i})>0$ and
$x_{i}^{+}(k_{i})<0$ for $\left\vert k_{i}\right\vert $ large enough.

Thus the claim is proved. Note that for $\left\vert k_{i}\right\vert $ large
enough, the equilibria $e_{i}(k_{i})$ are arbitrarily close to $0$.

Next we analyze the stability properties of the equilibria. Since the
equilibrium in the origin is unstable, it follows from general properties of
scalar autonomous differential equations that $e_{1}(k_{1})$ and $e_{2}%
(k_{2})$ are asymptotically stable with domains of attraction $(0,\infty)$ and
$(-\infty,0)$, respectively. In order to prove exponential stability, we
compute the Jacobian in a nontrivial equilibrium of the feedback system using
the product rule and (\ref{equilibrium0}),%
\begin{align*}
J(x)  &  =\frac{\partial}{\partial x}\left[  x(\lambda+\alpha_{0}x+\beta
_{0}k_{i}x+\gamma_{0}k_{i}^{2}x+\alpha_{1}x^{2}+\beta_{1}x^{2}k_{i}+\gamma
_{1}xk_{i}^{2}x+\eta_{1}k_{i}^{3}x^{2}\right] \\
&  =x\left[  \alpha_{0}+\beta_{0}k_{i}+\gamma_{0}k_{i}^{2}+2(\alpha_{1}%
+\beta_{1}k_{i}+\gamma_{1}k_{i}^{2}+\eta_{1}k_{i}^{3})x\right] \\
&  =x\left[  \Delta_{0,i}(k_{i})+2\Delta_{1,i}(k_{i})x\right]  .
\end{align*}
For $x=x_{1}^{-}(k_{1})=e_{1}(k_{1})>0$ one finds by (\ref{x_i})%
\begin{align*}
J(e_{1}(k_{1}))  &  =e_{1}(k_{1})\left[  \Delta_{0,1}(k_{1})+2\Delta
_{1,1}(k_{1})e_{1}(k_{1})\right] \\
&  =-e_{1}(k_{1})\sqrt{\Delta_{0,1}(k_{1})^{2}-4\lambda\Delta_{1,1}(k_{1})}<0,
\end{align*}
and for $x=x_{2}^{+}(k_{2})=e_{2}(k_{2})<0$ one obtains%
\[
J(e_{2}(k_{2}))=e_{2}(k_{2})\sqrt{\Delta_{0,2}(k_{2})^{2}-4\lambda\Delta
_{1,2}(k_{2})}<0.
\]
Concluding, we have found constants $k_{1},k_{2}$ with $\left\vert
k_{1,2}\right\vert $ large enough such that there are, besides the trivial
equilibrium at the origin, the two equilibria $e_{2}(k_{2})<0<e_{1}(k_{1})$
which are locally exponentially stable with domain of asymptotic attraction
$(-\infty,0)$, and $(0,\infty)$, respectively. Note that for $\left\vert
k_{i}\right\vert \rightarrow\infty$ one has that $J(e_{i}(k_{i}))\rightarrow
-\infty$ with $\left\vert k_{i}\right\vert $. This follows, since $e_{i}%
(k_{i})\rightarrow0$ with $\left\vert k_{i}\right\vert ^{-1}$ and
$\sqrt{\Delta_{0,i}(k_{i})^{2}-4\lambda\Delta_{1,2}(k_{i})}\allowbreak
\rightarrow\infty$ with $\left\vert k_{i}\right\vert ^{2}$. Hence the
exponential rate $\alpha$ can be chosen arbitrarily large for $\left\vert
k_{i}\right\vert $ large enough.

Similarly as in the proof of Theorem \ref{Theorem_equilibria1} one finds for
$\alpha>0$ a constant $M=M(\varepsilon,\alpha)\geq1$ such that the solutions
$\psi(t,x_{0};k_{1},k_{2})$ satisfy for $x_{0}\in\Gamma\cap(0,\infty]$ with
$i=1$ and for $x_{0}\in\Gamma\cap(-\infty,0)$ with $i=2$,
\begin{equation}
\left\vert \psi(t,x_{0};k_{1},k_{2})-e_{i}(k_{i})\right\vert \leq e^{-\alpha
t}M\left\vert x_{0}\right\vert ,t\geq0.\nonumber
\end{equation}
With $\zeta_{\varepsilon}(r,s)=e^{-\alpha s}M(\varepsilon,\alpha)r$ one shows
as in the proof of Theorem \ref{Theorem_equilibria1} that the system is
$\varepsilon$-practically $(\zeta_{\varepsilon},\Gamma,\{0\})$-stable for
$\left\vert k_{1,2}\right\vert $ large enough. The required control values
$k_{i}\psi(t,x_{0},k_{1},k_{2}),x_{0}\in\Gamma$, are in the control range, if
$\rho(\varepsilon)$ is large enough.
\end{proof}

\textbf{Acknowledgments}. We are grateful to Lars Gr\"{u}ne for references
showing that stabilization algorithms for asymptotically stabilizable
equilibria may only lead to practical stabilization, Daniel Liberzon observed
that the proof of \cite[Lemma 4.1, Theorem 4.2]{Colo12b} can be simplified, we
have adapted some of his arguments for the proof of Theorem \ref{Theorem6.2}.

\end{document}